\documentclass{article}
\usepackage{amsmath,amsthm}
\usepackage{amssymb}
\usepackage{upgreek}
\usepackage{epsfig}
\usepackage{graphicx}
\usepackage[
  top=0.7in,
  bottom=0.75in,
  left=0.75in,
  right=0.75in
]{geometry}
\usepackage{setspace}
\usepackage{enumerate}
\usepackage{tikz}
\usepackage{tikz-cd}
\usepackage{xcolor}
\usepackage{mathtools}
\usepackage{mathabx} 
\usepackage{stmaryrd}
\usepackage{thmtools}
\usepackage{etoolbox} 
\newbool{defendused}
\usepackage[normalem]{ulem}
\renewcommand{\underline}{\uline}

\usepackage[backref=page]{hyperref}
\hypersetup{
  colorlinks   = true,          
  urlcolor     = blue,          
  linkcolor    = violet,          
  citecolor   = blue             
}

\newcommand{\defend}{\hfill\mbox{$\lozenge$}}

\theoremstyle{definition}
\newtheorem{definition}{Definition}

\newtheorem{theorem}[definition]{Theorem}
\newtheorem{corollary}[definition]{Corollary}
\newtheorem{example}[definition]{Example}

\newtheorem{remark}[definition]{Remark}

 \AtEndEnvironment{definition}{\defend}

\renewcommand{\thmcontinues}[1]{continued}

\newcommand{\A}{{\mathbb{A}}}

\newcommand{\NN} {{\mathbb N}}		
\newcommand{\PP}{\mathbb{P}}         
\newcommand{\QQ} {{\mathbb Q}}		
\newcommand{\RR} {{\mathbb R}}		
\newcommand{\ZZ} {{\mathbb Z}}

\def\cM{\mathcal{M}}

\newcommand{\Mbar}{\overline{\mathsf M}\vphantom{\cM}}

\def\log{\mathsf{log}}

\newcommand{\Spec}{\operatorname{Spec}}

\usepackage{subcaption}

\renewcommand{\log}{{\mathsf {log}}}

\newcommand{\ul}[1]{{\underline{#1}}}

\makeatletter
\newcommand*{\doublerightarrow}[2]{\mathrel{
  \settowidth{\@tempdima}{$\scriptstyle#1$}
  \settowidth{\@tempdimb}{$\scriptstyle#2$}
  \ifdim\@tempdimb>\@tempdima \@tempdima=\@tempdimb\fi
  \mathop{\vcenter{
    \offinterlineskip\ialign{\hbox to\dimexpr\@tempdima+1em{##}\cr
    \rightarrowfill\cr\noalign{\kern.5ex}
    \rightarrowfill\cr}}}\limits^{\!#1}_{\!#2}}}

\title{An invitation to the enumerative geometry of degenerations}
\author{Dhruv Ranganathan}
\date{January 2026}

\begin{document}
\maketitle

\begin{abstract}
This expository article is an introduction to logarithmic Gromov--Witten (GW) theory. 
We discuss how to study the GW theory of a smooth projective variety via simple normal crossings degenerations. 
We survey several approaches to constructing well-behaved, virtually smooth moduli spaces of stable maps to such degenerations. 
Each irreducible component of the special fiber of a degeneration determines a pair consisting of a variety and a normal crossings divisor, and these pairs carry their own logarithmic GW theory. 
We explain how the GW theory of the general fiber can be expressed in terms of the logarithmic GW theory of these pairs. 
Finally, we discuss applications to tautological classes on the moduli space of curves.
\end{abstract}

\setcounter{tocdepth}{2}
\tableofcontents

\section{Introduction}

Let $X$ be a smooth projective manifold. A large part of modern enumerative geometry deals with the intersection theory of moduli spaces $\mathsf M(X)$ attached to $X$. Rich examples include the Hilbert and Quot schemes, moduli spaces of stable bundles and sheaves, and moduli spaces of curves in $X$. Our focus in this note is the intersection theory on the moduli space of maps from nodal curves to $X$, called \ul{Gromov--Witten (GW) theory}. 

In the last decade, thanks to the work of a number of authors, there have been significant advances in how we think about and use degenerations in enumerative geometry. Given a simple normal crossings degeneration $\mathcal X/B$ with general fiber $X$ and simple normal crossings (snc) special fiber $\mathcal X_0$, one goal of this line of inquiry is to extract information about the intersection theory on $\mathsf M(X)$ from analogous moduli spaces associated to the pieces of the special fiber $\mathcal X_0$, or more precisely its \underline{strata} -- the components, the components of the double locus, those of the triple locus, etc. The outcome is the \underline{logarithmic degeneration formula}. At the level of intersection numbers on $\mathsf M(X)$, i.e. Gromov--Witten invariants, it takes the following form:
\[
\begin{tikzpicture}
\node[
  draw,
  rounded corners=8pt,
  inner sep=12pt
] at (0,0)
{$
\text{GW invariants of } X
\;=\;
\displaystyle \sum_{\text{discrete data}}
\;\Bigg(
\displaystyle 
\prod\limits_{\substack{X_i \subset \mathcal X_0\\ \text{a stratum}}}
\underbrace{\text{``log'' GW invariants of } X_i}_{\shortstack{\small counts of curves\\\small with fixed tangency conditions}}
\Bigg)
$};
\end{tikzpicture}
\]
A similar formula holds for cycles. The precise formulas are stated in the main text and proved in~\cite{MR23}. We give an exposition of this formula and the theory that leads to it. 

The insights that have allowed progress have been both theoretical and practical. On the theoretical side, logarithmic geometry and algebraic stacks have played a key role, while on the practical side, tropical geometry has made the resulting theory computable. We will try to explain how both arise in a natural way.

\subsection{A motivating result}

In order to keep a clear geometric motivation, we anchor things with the following result which uses many aspects of these recent advances. Let $\Mbar_g$ denote the moduli space of stable curves of genus $g$ and let $X_5$ be a smooth quintic threefold in $\mathbb P^4$. 

The structures of Gromov--Witten theory produce, for each degree $d$, an element $\mathfrak m_g(X_5,d)$ in the Chow group $\mathsf{CH}_0(\Mbar_g;\mathbb Q)$. The class is related to curves of genus $g$ and degree $d$ on $X_5$. By a naive parameter count, the dimension of the moduli space of such curves is $0$. Morally, the class $\mathfrak m_g(X_5,d)$ is the sum of the classes of these curves. Note that $\mathsf{CH}_0(\Mbar_g;\mathbb Q)$ is expected to be huge for almost all values of $g$. Therefore, the following statement, first conjectured by Pandharipande and proved in~\cite{MR25}, is perhaps surprising. 

\begin{theorem}
For all values of $g$ and $d$, the $0$-cycle $\mathfrak m_g(X_5,d)$ is proportional to the top Chern class of the tangent bundle of $\Mbar_g$ in the Chow group of $0$-cycles on $\Mbar_g$. Equivalently, the class $\mathfrak m_g(X_5,d)$ is proportional to the class of any $0$-dimensional stratum of $\Mbar_g$. 
\end{theorem}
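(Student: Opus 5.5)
The plan is to degenerate the quintic and reduce the statement to a structural property of $0$-cycles built from simpler geometries. I take the degeneration formula above as the main tool, in the cycle-valued form proved in~\cite{MR23}. Concretely, I would degenerate $X_5$ to a simple normal crossings variety whose strata are as simple as possible, for instance by degenerating the quintic to the union of a quartic and a hyperplane and then iterating. A cleaner option is a toric-style degeneration in which every stratum is a pair $(Y,D)$ with $Y$ toric or a blowup of a toric variety along a smooth complete intersection, and $D$ a union of toric boundary components. The cycle-valued degeneration formula, pushed forward along the stabilization map to $\Mbar_g$, then writes $\mathfrak m_g(X_5,d)$ as a finite sum over tropical types (decorated graphs) of gluing pushforwards. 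Each term is obtained by gluing logarithmic GW cycles of the strata, then applying the forgetful and stabilization maps.

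The second step is to show that each logarithmic piece is tautological. Along the way the gluing must produce only tautological classes, which holds because gluing and forgetting maps preserve the tautological ring. For pieces that are toric pairs I would use torus localization on the logarithmic moduli space, or on a toric expansion of it. This expresses the log GW cycles via double ramification cycles, Hodge classes, and $\psi$-classes. By Pixton's formula and the results of Janda--Pandharipande--Pixton--Zvonkine, these lie in the tautological ring $\mathsf R^*(\Mbar_{g,n})$. For non-toric strata, such as a blowup along a curve or a quartic surface pair, I would degenerate further or use a projective bundle/blowup formula for log GW theory to reach toric pieces. This recursion should terminate in the relative dimension and degree.

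The final step is to conclude proportionality. $\mathfrak m_g(X_5,d)$ is then a tautological $0$-cycle on $\Mbar_g$. The tautological $0$-cycles on $\Mbar_g$ (more generally $\mathsf R_0(\Mbar_{g,n})$) are one-dimensional: all $0$-dimensional boundary strata are rationally equivalent, by Graber--Vakil's result using Getzler-type/WDVV relations. So $\mathfrak m_g(X_5,d)$ is a multiple of the class of any $0$-dimensional stratum. The same holds for $c_{\mathrm{top}}(T_{\Mbar_g})$, since it is tautological and has nonzero degree $\chi(\Mbar_g)$, given as an orbifold Euler characteristic. This yields both formulations of the theorem.

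The main obstacle is the second step. The logarithmic GW cycles of the pieces must be tautological, and the gluing terms in the degeneration formula must be handled correctly. In the logarithmic setting the gluing is not a naive fiber product over the double locus: it involves diagonal classes on strata, and these must be expanded via Künneth decompositions. For these to stay tautological, the cohomology of the double loci must be spanned by algebraic classes with controlled (for instance toric or Tate) Chow groups. The degeneration therefore has to be chosen so that every stratum has this property. Odd cohomology of the quintic must disappear in the limit, which is why I would insist on a degeneration whose strata are rational with Chow–Künneth decompositions. I would first try the degeneration of the quintic into five hyperplanes, resolved to be snc. Its strata are blowups of $\mathbb P^3$ and toric surfaces along curves, which I would handle by further degeneration to the normal cone.
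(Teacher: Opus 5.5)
Your architecture matches the paper's. The paper deduces the theorem from the fact that Chow-valued GW cycles of complete intersections in products of projective spaces (with ambient insertions) are tautological. That fact is proved with the cycle-valued logarithmic degeneration formula, and then $\mathsf R_0(\Mbar_g)\cong\QQ$ gives proportionality, so your last step is fine. The gap is in your second step, exactly at the obstacle you single out: the degenerations and recursion you propose do not get around it.

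The quartic-plus-hyperplane degeneration is a double point degeneration glued along a quartic K3 surface $W$. Since $h^{2,0}(W)\neq 0$, $[\Delta_W]$ is not a sum of products of Chow classes, because such a decomposition would make all of $H^\star(W)$ algebraic. This is precisely why the paper insists on snc rather than double point degenerations.

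In $\mathbb V(tf_5-\prod_i\ell_i)$ the total space is singular along the ten plane quintic curves $C_{ij}=\{f_5=\ell_i=\ell_j=0\}$, which have genus $6$. Resolving to an snc model builds these curves into the pieces; e.g.\ a small resolution replaces some components by $\mathbb P^3$ blown up along them. These pieces are neither toric nor linear, since they have $H^3\neq 0$. Your recursion does not help. Degenerating to the normal cone of a plane leaves $C_{ij}$ inside one of the pieces. A blowup formula, i.e.\ degeneration to the normal cone of $C_{ij}$, glues along the exceptional divisor $E$, a $\mathbb P^1$-bundle over a genus $6$ curve, and $H^1(E)\neq 0$ again rules out a Chow decomposition of $[\Delta_E]$.

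You have to choose the degeneration so that every stratum that can occur as an evaluation space is a linear variety in the sense of~\cite{Tot14}; for instance, one might try degenerating $f_5$ as well so that no positive-genus curves survive. This is the point of the paper's remark that many snc, but not double point, degenerations have linear strata.

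Even with linear strata, the formula does not glue along $\Delta_W\subset W\times W$. It glues along the strict transform $\Delta^\dagger$ in a flattening blowup of $\prod_V\mathsf{Ev}_{\Lambda_V}$, which is a blowup of a product rather than a product of blowups. Its K\"unneth factors are therefore exotic insertions on the $\mathsf{Ev}^\dagger_{\Lambda_V}$. Your treatment of the pieces (torus localization on logarithmic spaces of toric pairs, then double ramification cycles and Pixton's formula) does not address these insertions or the non-toric pieces. The localization step on logarithmic moduli spaces is also asserted rather than justified.

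The paper takes a different route here. It reduces logarithmic cycles, exotic insertions included, to absolute GW cycles of strata, using the logarithmic-in-terms-of-absolute theorem and its algorithm trading exotic insertions for non-exotic ones. Only elementary linear pieces then remain, where tautologicality in Chow can be checked directly, e.g.\ by localization~\cite{GP99}.
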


In the final section, we state strengthenings and variants of the above result that can be obtained using the techniques developed here, together with further consequences for GW cycles and the tautological ring of $\Mbar_{g,n}$. These applications are by no means the limit of the usefulness of the methods discussed here. For instance, logarithmic techniques play a central role in the Gross--Siebert mirror symmetry program~\cite{GS26}, as well as in recent work of Chen--Guo--Janda--Ruan on the GW theory of hypersurfaces~\cite{CJR,GJR}. We do not attempt to discuss these developments here, largely due to our own limitations.

\subsection{Three things about GW theory}

Fix a non-negative integer $g$ for the genus and a curve class $\beta\in H_2(X;\ZZ)$. Our main character is the moduli problem $\Mbar_{g,n}(X,\beta)$ for stable maps. A point of this space corresponds to
\[
f \colon (C,p_1,\ldots,p_n)\to X
\]
where $C$ is at worst nodal, has arithmetic genus $g$, the $p_i$ are distinct smooth marked points, and $f_\star[C]$ is equal to $\beta$. Two maps are isomorphic if there is an isomorphism of pointed curves that commutes with the map. The \underline{stability condition} requires that the automorphism group of $[f]$ is finite. More practically, this means that any contracted subcurve, marked at the $p_i$ and the nodes, should be a stable curve. 

The foundations of stable maps and GW theory require significant technical input. We use three key aspects of the theory off the shelf. 

\begin{enumerate}[(i)]
\item \underline{Properness.} The space $\Mbar_{g,n}(X,\beta)$ is representable by a proper Deligne--Mumford stack. 
\item \underline{Virtual structure.} There is a distinguished Chow homology class, the \underline{virtual fundamental class}
\[
[\Mbar_{g,n}(X,\beta)]^{\sf vir}\in \mathsf{CH}_{\sf vdim}(\Mbar_{g,n}(X,\beta);\QQ),
\]
where
\[
{\sf vdim} = (\mathsf{dim}(X)-3)(1-g)-K_X\cdot\beta+n.
\]
\item \underline{Tautological morphisms.} The space admits a tautological source map
\[
\pi\colon \Mbar_{g,n}(X,\beta)\to \Mbar_{g,n},
\]
obtained by forgetting the map and stabilizing the domain curve. It also admits an {\it evaluation map}
\[
{\sf ev}\colon \Mbar_{g,n}(X,\beta)\to X^n,
\]
given by evaluating $f$ at the marked points. 
\end{enumerate}

We avoid unpacking the black box (ii) too much, but a few words are in order. In modern parlance, the stack $\Mbar_{g,n}(X,\beta)$ has a quasi-smooth derived stack structure. In more down-to-earth terms, one should imagine $\Mbar_{g,n}(X,\beta)$ as having a distinguished presentation as an intersection of two smooth spaces inside a third smooth space, where the expected dimension of the intersection is ${\sf vdim}$, but the intersection may not be transverse. In this case, the virtual class is the refined intersection product. In simple cases, such as when $X$ is toric, this is literally true~\cite[Remark~3.2.2]{CFK}. Virtual/derived structures use the deformation theory of the moduli problem to assert that this is always true locally and sufficiently coherently so as to still give a class. See~\cite{BCM20} for a beautiful introduction to the virtual class and~\cite{FP96} for a construction and discussion of the geometry of the space. 

The virtual structure in (ii) uses the smoothness of $X$, but not its properness, while the properness in (i) uses the properness of $X$, but not its smoothness. A proper, singular scheme relevant to our discussion is the special fiber $\mathcal X_0$ of an snc degeneration. 

Gromov--Witten theory is concerned with classes of the form
\[
\pi_\star\left({\sf ev}^\star(\upalpha)\cap [\Mbar_{g,n}(X,\beta)]^{\sf vir} \right)\ \ \textnormal{in} \ \  \mathsf{CH}^\star(\Mbar_{g,n};\QQ), \ \ \upalpha\in \mathsf{CH}^\star(X^n).
\]
When $n$ is equal to $0$, the target $X$ is a quintic threefold $X_5$, and $\beta$ is the degree $d$ class, we obtain a precise definition of the class $\mathfrak m_g(X_5,d)$ from before. 

One can, of course, study the analogous classes in singular cohomology. The two stories have different features. 

When $\upalpha$ is a Chow or Betti cohomology class of degree complementary to the virtual dimension one can push forward to a point to obtain \underline{Gromov--Witten invariants}. The structure of Gromov--Witten invariants is a rich and significant slice of the subject, but even if one is only interested in the invariants, it can be very useful to have cycle-level statements.

\subsection{Deformations, degenerations, semistable reduction} 

A key feature of the virtual class is that it is unchanged, in an appropriate sense, under deformations of $X$. The deformation invariance is quite striking. For example, any smooth and proper rational surface can be deformed to a toric surface, and the enumerative geometry of toric varieties can be approached by a suite of techniques from equivariant cohomology~\cite{GP99}. 

Nevertheless, the requirement that $X$ is smooth is limiting. For example, if $X\subset \mathbb P^r$ is a smooth degree $d$ hypersurface defined by an equation $f_d$, such as our motivating quintic, a family of smooth deformations 
\[
\mathcal X_t :=\mathbb V(f_d+t\cdot g_d)\subset \PP^r, \ \ t\in \mathbb C
\]
can be obtained by varying the coefficients of the defining equation for $X$. However, often, there is no particular value of $t$ for which $\mathcal X_t$ is both smooth and easier than $X$ itself. What is more tempting, at least to some, is to study the family
\[
\mathcal X_{[t_0,t_1]} = \mathbb V(t_0\cdot f_d+t_1\cdot \prod_{i=1}^d \ell_i), \ \ [t_0,t_1] \in \mathbb P^1,
\]
where $\ell_i$ are generic linear forms. Over the point $[0,1]$ one obtains a singular variety whose components are $\mathbb P^{r-1}$. 

Denote the total family $\mathcal X$ and let $\mathcal X_0$ be the fiber over $[0,1]$. For each smooth fiber $\mathcal X_b$, we can consider the moduli space of stable maps to $\mathcal X_b$ and its virtual class. By intersection theory~\cite[Section~20.3]{Ful98}, we can specialize the virtual class to a class:
\[
[\Mbar_{g,n}(\mathcal X_0,\beta)]^{\sf spec-vir}\in {\sf CH}_{\sf vdim}(\Mbar_{g,n}(\mathcal X_0,\beta);\QQ).
\]
The key question is then:

\[
\begin{tikzpicture}
\node[
  draw,
  rounded corners=8pt,
  inner sep=14pt
] 
{
$\displaystyle
\begin{array}{c}
\textnormal{Can the specialized virtual class } 
[\Mbar_{g,n}(\mathcal X_0,\beta)]^{\sf spec\text{-}vir} \\
\textnormal{be described in terms of curves in the components, the double loci, etc?}
\end{array}
$
};
\end{tikzpicture}
\]

As stated, the question has some hidden complexity. The total space of the family $\mathcal X\to \PP^1$ above is singular; this can already be seen when degenerating a quadric surface to two planes. These kinds of singularities are not something we currently understand how to deal with directly, though there is beautiful work in this direction~\cite{BN22,FFR21}. However, by the semistable reduction theorem~\cite{AK00,KKMSD}, by blowups and base change, we can arrange for a family
\[
\mathcal X'\to B'
\]
with the same general fiber, smooth total space, and whose singular fibers are simple normal crossings divisors in the total space. 

\subsection{Outline}

Let us reset the notation. Let $\mathcal X\to B$ be flat and proper map between smooth varieties, with $B$ a curve, and such that all singular fibers comprise smooth hypersurfaces meeting transversely in $\mathcal X$. A cartoon picture is given in Figure~\ref{fig: snc-degeneration}. Let $\mathcal X_0$ be a singular fiber. The goals of this note revolve around two different but closely related geometric situations.

\begin{figure}
\begin{center}
\includegraphics[scale=0.3]{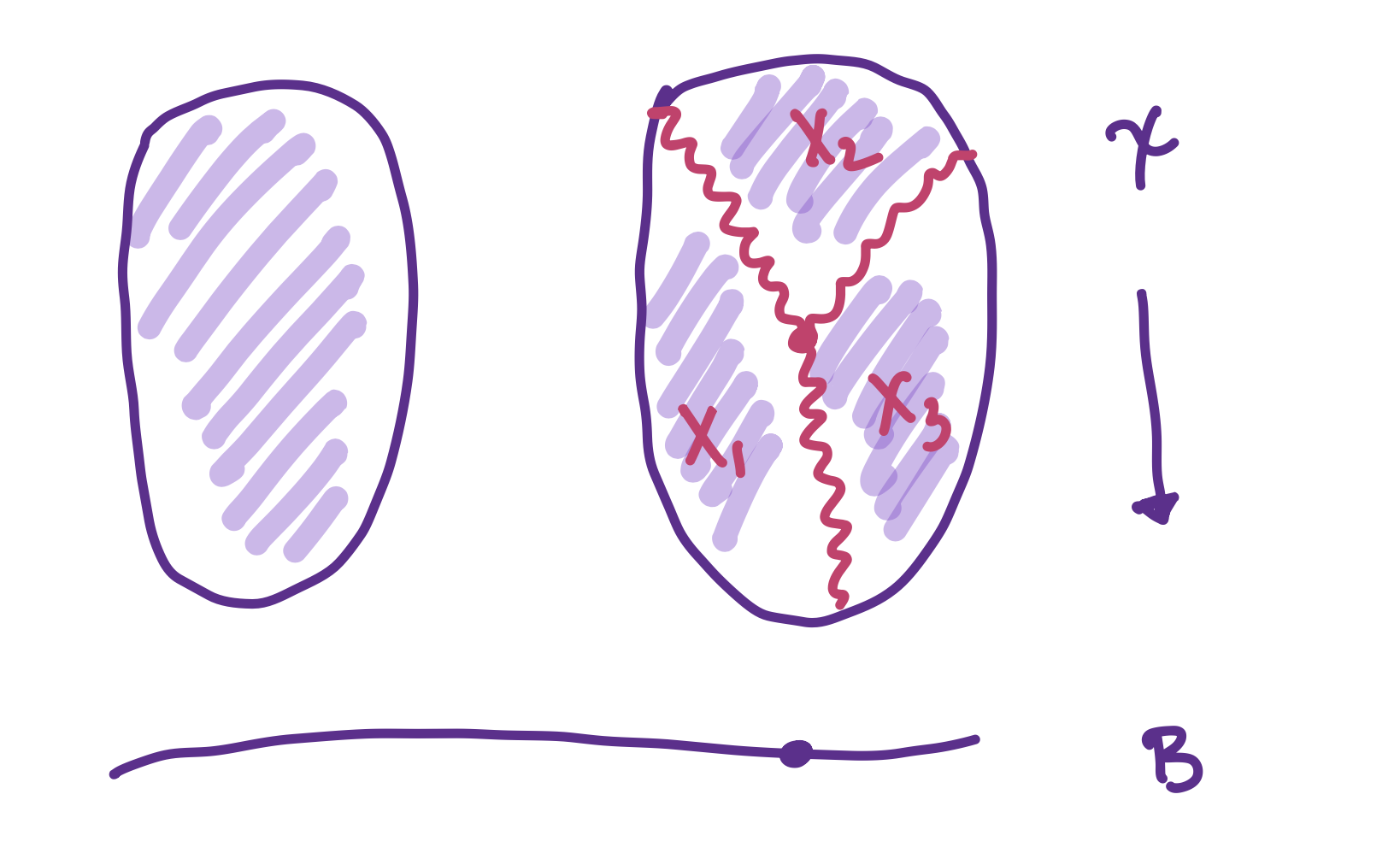}
\end{center}
\caption{A cartoon of a surface degenerating into three components meeting transversely.}\label{fig: snc-degeneration}
\end{figure}

\subsubsection{The degeneration theory}

Our first task is the following.

\[
\begin{tikzpicture}
\node[
  draw,
  rounded corners=8pt,
  inner sep=14pt
]
{
$\displaystyle
\begin{array}{c}
\textnormal{Describe a replacement for the moduli space of stable maps to the singular fiber } \mathcal X_0 \\
\textnormal{that carries an intrinsic virtual class equal to } 
[\Mbar_{g,n}(\mathcal X_0,\beta)]^{\sf spec\text{-}vir}.
\end{array}
$
};
\end{tikzpicture}
\]

We describe three approaches to this problem, all of which lead to equivalent answers. The first is ``cheap'', possibly unusable on its own, but has the feature of feeling ``inevitable'' subject to a few requirements. 

The second is the approach via ``expansions''. This came out of ideas in symplectic geometry~\cite{LR01,IP01}, and has precedents in the study of vector bundles~\cite{Gie84}. Jun Li used these expansions to study the GW theory of ``double point'' degenerations~\cite{Li01,Li02}, and the snc case came later, in work of Maulik and the author~\cite{MR23,R19}. 

The basic idea is to identify a well-behaved but non-compact subspace of the space of stable maps to $\mathcal X_0$ defined by a transversality condition, and compactify it by prioritizing this condition. The approach differs, at least at first sight, from the cheap solution, but gives the same invariants/cycles and is easily amenable to geometric arguments. 

The third is the ``logarithmic geometry'' solution, first envisioned by Siebert, and then carried out by Abramovich--Chen~\cite{AC11,Che10} and Gross--Siebert~\cite{GS13}. The approach here is that when viewed correctly in the category of logarithmic schemes, the map $\mathcal X\to B$ is \underline{smooth} and so $\mathcal X_0$ should be viewed as a smooth object in this category. Again, this differs only cosmetically from the cheap solution, but the logarithmic context is very useful conceptually. 

In both the logarithmic and expanded approach, tropical geometry plays a key role. In fact, this article contains an approach to logarithmic GW theory that completely circumvents logarithmic geometry. However, tropical geometry and the closely related stacks called \underline{Artin fans}, play a prominent role in all the approaches. Tropical geometry also plays a fundamental role in B. Parker's approach~\cite{Par11,Par17a}.

\subsubsection{The strata theory}

Our second task is the following.

\[
\begin{tikzpicture}
\node[
  draw,
  rounded corners=8pt,
  inner sep=14pt
]
{
$\displaystyle
\textnormal{Describe the specialized virtual class in terms of a stable maps theory associated to the strata of } \mathcal X_0.
$
};
\end{tikzpicture}
\]

The strata of $\mathcal X_0$ are smooth varieties, but come equipped with a natural simple normal crossings divisor -- the divisor where they meet deeper strata. The right objects to consider turn out not just to be the deeper strata themselves, but the components obtained by blowing up these strata in $\mathcal X_0$, i.e. projective normal bundles over the strata. Let us write $(X|\partial X)$ for the pair consisting of such a target and its natural boundary divisor. We will explain how the appropriate ``strata'' version of the problem is about maps:
\[
(C,p_1,\ldots, p_n)\to (X|\partial X)
\]
where the $p_i$ have \underline{prescribed tangency} along the components of $\partial X$. The prescription of tangency will act as a ``boundary condition'', and we will glue together maps with matching boundary conditions to describe maps to $\mathcal X_0$. 

In parallel with the degenerate theory, there are three approaches to GW theory in this context of pairs. However, for the purposes of describing the specialized virtual class in terms of stable maps to the strata, the expanded and logarithmic approaches lead to different answers. Again, tropical geometry plays a key role in working with the spaces, regardless of the approach. 

Finally, we put all of these pieces together to state the degeneration formula in Section~\ref{sec: deg-formula}.

\subsubsection{The rest of the document}

In the final few sections of the paper, we collect several basic examples and then outline a few of the applications, including how the results eventually lead to the theorem stated in the introduction. 

\subsection{Further reading}

We emphasize that this article is expository and is based on a number of papers, including~\cite{ACW,AC11,ACGS15,AMW12,AW,CheDegForm,Che10,GS13,Kim08,KLR23,MR20,MR25,MR23,R19}. The main goal of this article is to communicate the basic foundational picture of logarithmic GW theory, at least from the author's perspective. While the exact logical presentation might not appear elsewhere, the ideas may be well-known to experts. There is an obvious intellectual debt to the work of Jun Li~\cite{Li01,Li02}. Another such debt is to a parallel theory, closer to symplectic geometry, which has been put forward in the beautiful work of B.~Parker~\cite{Par17a,Par19,Par11}. 

A reader interested in using this theory and in need of examples where calculations are carried out may consult~\cite{Bou17,KHSUK,MR16,NS06,VGNS}. A reader interested in learning the basics of tropical geometry in an enumerative context can consult Markwig's chapters in the book~\cite{CMRbook}. 

There are many further relevant results and ideas that we will not have time to say much about, but that might interest the reader. A recent development is \underline{punctured} logarithmic maps~\cite{ACGS25} and the closely related refined punctured theory~\cite{BNR24}, which gives a different approach to the degeneration formula. Logarithmic degeneration formulas for Donaldson--Thomas invariants, and the conjectural correspondence with Gromov--Witten theory, are presented in~\cite{MR26,MR20,MR23}. Wise has studied logarithmic Hom schemes in broad generality~\cite{Wis16a} and Kennedy-Hunt has recently developed a very general theory of \underline{logarithmic Hilbert and Quot schemes}, see~\cite{KH23}.

\subsection*{Acknowledgements} 

The text is a distillation of many conversations I have had with my students P. Kennedy-Hunt, E. Dodwell, S. Kannan, S. Koyama, S. Mok, Q. Shafi, T. Song, A. Urundolil Kumaran, and W. Zheng. Given this, I especially hope it will be helpful to young people. I learned this material from many friends and collaborators, and I especially wish to thank D. Abramovich, R. Cavalieri, A. Cela, M. Gross, H. Markwig, D. Maulik, S. Molcho, N. Nabijou, S. Payne, and J. Wise. I am very grateful to D. Abramovich, N. Nabijou, Q. Shafi, and R. Thomas for some extremely valuable feedback on an earlier draft. 

A significant portion of this text was written on British Airways Flight BA035 from London LHR to Chennai MAA and later on the return flight BA036. We thank both flight crews for ideal working conditions. 

The work was supported by EPSRC Horizon Europe Guarantee EP/Y037162/1.

\section{Curves in degenerations}

We keep the notation of the introduction, so $\mathcal X/B$ is an snc degeneration over a curve with special fiber $\mathcal X_0$ and general fiber $\mathcal X_\eta$. Our goal is to build a limiting special fiber for the general fiber $\Mbar_{g,n}(\mathcal X_\eta,\beta)$ that also carries a specialization of the virtual class. We will call this the space of \underline{cheap logarithmic maps} and denote it $\Mbar^{\sf ch}_{g,n}(\mathcal X/B,\beta)$. The basic point is that this is really the smallest space that can define the theory that we want, so we now formally describe what we want. 

\subsection{Two basic requirements}

Two simple requirements pin down $\Mbar^{\sf ch}_{g,n}(\mathcal X/B,\beta)$ completely. \\

\noindent
\underline{Properness requirement.} First, in order to effectively do intersection theory on the space, we would like the space $\Mbar^{\sf ch}_{g,n}(\mathcal X_0,\beta)$ to have a proper map to the stack of ordinary stable maps
\[
\Mbar^{\sf ch}_{g,n}(\mathcal X/B,\beta)\longrightarrow \Mbar_{g,n}(\mathcal X/B,\beta).
\]
We could go a step further and ask for this map to be finite or even a closed immersion. The latter condition would mean we could think of the proposed space of maps as ``ordinary maps satisfying certain properties.'' As we will see, however, allowing the map to be finite or proper provides more flexibility in the modular interpretation. 

In any event, by the valuative criterion for properness, every sequence of stable maps in $\Mbar_{g,n}(\mathcal X_\eta,\beta)$ should have a limit in $\Mbar^{\sf ch}_{g,n}(\mathcal X/B,\beta)$. \\

\noindent
\underline{Base change property.} The next condition is motivated by a property of the usual space of stable maps, namely that the definition of $\Mbar_{g,n}(X,\beta)$ makes essentially no reference to the global geometry of $X$. In practice, this is a key feature when working with the space. 

In the setting of a degeneration, one way to formulate this ``lack of dependence'' on the global geometry is as follows. Let $\mathcal X/B$ and $\mathcal Y/B$ be snc degenerations over the same base $B$. We say a map
\[
\begin{tikzcd}
\mathcal X\ar{rr}\ar{dr}& & \mathcal Y\ar{dl}\\
&B&
\end{tikzcd}
\]
is \underline{strict} if every irreducible component of the special fiber $\mathcal X_0$ is the pullback of an irreducible component of the special fiber $\mathcal Y_0$. 

We impose the following functoriality requirement on the eventual definition of 
$\Mbar^{\sf ch}_{g,n}(\mathcal X/B,\beta)$: whenever we have a strict morphism 
$\mathcal X \to \mathcal Y$ over $B$, the induced square
\[
\begin{tikzcd}
\Mbar^{\sf ch}_{g,n}(\mathcal X/B,\beta)\arrow{r}\arrow{d} 
& \Mbar^{\sf ch}_{g,n}(\mathcal Y/B,\beta)\arrow{d} \\
\Mbar_{g,n}(\mathcal X/B,\beta)\arrow{r} 
& \Mbar_{g,n}(\mathcal Y/B,\beta)
\end{tikzcd}
\]
is Cartesian, where the bottom row consists of the ordinary moduli spaces of maps.

In particular, the cheap space for $\mathcal X$ is obtained from that of $\mathcal Y$ by base change. Thus, the construction incorporates no additional global information from $\mathcal X$ beyond what is already present for $\mathcal Y$. One might then suspect that it depends only on the combinatorics of the special fiber, and not on the ambient geometry.

This requirement leads naturally to the following schematic containment:
\[
\begin{tikzpicture}
\node[
  draw,
  rounded corners=8pt,
  inner sep=12pt
] at (0,0)
{$
\left\{
\text{Smoothable maps to } \mathcal X_0
\right\}
\;\subset\;
\left\{
\begin{array}{c}
\text{Maps to $\mathcal X_0$ that are smoothable after}\\
\text{composing with a \underline{particular} strict } \mathcal X \to \mathcal Y
\end{array}
\right\}
\;\subset\;
\left\{
\text{Cheap log maps}
\right\}
$};
\end{tikzpicture}
\]

The first set is the minimal one required to ensure properness, and must therefore be contained in any reasonable definition of the cheap space. The second set comes from the functoriality requirement: if $\mathcal X \to \mathcal Y$ is strict, then smoothability of a map to $\mathcal X_0$ implies smoothability of the induced map to $\mathcal Y_0$. However, the converse is not always true, so the containment above is usually strict. Requiring the Cartesian property above therefore enlarges the space each time we impose functoriality for a particular strict map. The idea is that the cheap space should include everything that is smoothable after \underline{some} projection along a strict map.

Put differently, suppose
\[
\mathcal X \to \mathcal Y \to \mathcal W
\]
are strict morphisms over $B$. If our cheap space contains maps to $\mathcal X_0$ that become smoothable after projection all the way to $\mathcal W$, we in particular include the maps that are smoothable after projection to $\mathcal Y$.

\subsection{A cheap trick}

By the base change discussion above, we see that if there were a choice of $\mathcal Y/B$ such that every other $\mathcal X$ had a strict map to it, that would completely describe the space. Amazingly, there is such a space and it is very simple to construct.

Recall that the stack $[\mathbb A^1/\mathbb G_m]$ is the moduli stack of pairs $(L,s)$ consisting of a line bundle and a section, and similarly $[\mathbb A^r/\mathbb G_m^r]$ is the moduli stack of $r$-tuples of such data. Given a scheme $S$ and a collection of Cartier divisors $D_1,\ldots, D_r$, there is a moduli map
\[
S\to [\mathbb A^r/\mathbb G_m^r]
\]
such that $D_j$ is the pullback of the divisor where the $j$th coordinate vanishes. 

We have a degeneration $\mathcal X\to B$, and we choose, arbitrarily, a labeling of the components of its special fiber. We therefore obtain a commutative square
\[
\begin{tikzcd}
{\mathcal X} \arrow{d}\arrow{r} & {[\mathbb A^r/\mathbb G_m^r]}\arrow{d}\\
B\arrow{r} & {[\mathbb{A}^1/\mathbb{G}_m]}.
\end{tikzcd}
\]
The right vertical map is induced by taking the complement of the open orbit in $[\mathbb A^r/\mathbb G_m^r]$; this is a Cartier divisor, and so we obtain a moduli map. The bottom arrow is induced by the Cartier divisor $0$ in $B$. Let us define
\[
\mathcal A_B = [\mathbb A^r/\mathbb G_m^r]\times_{[\mathbb A^1/\mathbb G_m]} B.
\]
Let $\mathcal A_{B,0}$ denote the special fiber. Note that $\mathcal A_B\to B$ is relatively $0$-dimensional, and away from the point $0$ it is an isomorphism. The map is stacky over $0$. 

The stack $\mathcal A_B\to B$ has a universal property: for every snc degeneration $\mathcal Y\to B$, together with a labeling of its special fiber components by $\{1,\ldots,r\}$, there is an induced map over $B$
\[
\mathcal Y\to \mathcal A_B. 
\]
Furthermore, this map is strict, in the sense described above. 

We now recall our two requirements: (i) the limit of every stable map to the general fiber of a degeneration should be a cheap logarithmic map, and (ii) the space of logarithmic maps should be well behaved under strict maps. This leads to the following definition of the space of logarithmic stable maps.

\begin{definition}[Cheap logarithmic maps]
Let $\mathfrak M_{g,n}(\mathcal A_B/B)$ denote the stack of maps from prestable curves to the fibers of $\mathcal A_B/B$. Define $\mathfrak M^{\sf ch}_{g,n}(\mathcal A_{B}/B)$ to be the closure of the space of maps to the general fiber
\[
\mathfrak M_{g,n}(\mathcal A^\circ_B/B^\circ)\hookrightarrow \mathfrak M_{g,n}(\mathcal A_B/B).
\]
Define the stack $\Mbar^{\sf ch}_{g,n}(\mathcal X/B,\beta)$ by the pullback
\[
\begin{tikzcd}
\Mbar^{\sf ch}_{g,n}(\mathcal X/B,\beta)\arrow{r}\arrow{d} & \mathfrak M^{\sf ch}_{g,n}(\mathcal A_{B}/B) \arrow{d} \\
\Mbar_{g,n}(\mathcal X/B,\beta)\arrow{r} & \mathfrak M_{g,n}(\mathcal A_B/B). 
\end{tikzcd}
\]
\end{definition}

This definition carries a natural virtual fundamental class. It arises from the deformation theory of the morphism 
\[
\Mbar_{g,n}(\mathcal X/B,\beta)\longrightarrow \mathfrak M_{g,n}(\mathcal A_B/B),
\]
by studying its relative tangent directions. Concretely, the map $\mathcal X\to \mathcal A_B$ is smooth over $B$, and hence admits a relative tangent bundle\footnote{In fact, this is simply the relative logarithmic tangent bundle of $\mathcal X/B$.}. At a stable map $[C\to \mathcal X]$, the relative obstruction theory is given by $R^\bullet \pi_\star T^{\sf log}_{\mathcal X/B}$. This suffices to construct a virtual pullback in the sense of Manolache~\cite{Mano12}.

The stack $\mathfrak M^{\sf ch}_{g,n}(\mathcal A_{B}/B)$ is pure of relative dimension $3g-3+n$ over $B$, and therefore we obtain a class
\[
[\Mbar^{\sf ch}_{g,n}(\mathcal X_0,\beta)]^{\sf vir}
\in \mathsf{CH}_\star(\Mbar^{\sf ch}_{g,n}(\mathcal X_0,\beta);\mathbb Q).
\]
For formal reasons, this virtual class is the specialization of the virtual class of the space of maps to the general fiber.

In summary, because $\mathcal X\to \mathcal A_B$ is smooth over $B$, the same deformation-theoretic arguments used in standard Gromov--Witten theory produce a virtual class in this setting.

As we explain shortly, the definition given here yields the same theory as the other approaches~\cite{AC11,Che10,GS13,R19,MR23}, and presumably also~\cite{Par11}. The definition itself is closest in spirit to work of Gathmann~\cite{Gat02}.  

\medskip
\noindent
\underline{Upshot.} Taken on its own, the definition above is conceptually simple but not especially practical, as it involves a built-in closure operation. Nevertheless, it captures a key conceptual point: this is the minimal theory satisfying the desired properties and is therefore, in a precise sense, inevitable. The more technically involved constructions that follow may be viewed as methods for turning this foundational definition into a workable theory.

\subsection{Expansions} 

The definition of the space $\Mbar^{\sf ch}_{g,n}(\mathcal X/B,\beta)$ involves a closure operation. One modular interpretation—after a certain birational modification—is provided by the study of expanded degenerations. The starting point is the following ``transversalization'' result. 

Let $C_0\to \mathcal Y_0$ be a stable map from a nodal curve to a normal crossings variety. Recall that $\mathcal Y_0$ is naturally stratified into its smooth locus, its locus of double points, triple points, and so on. We say the map is \underline{dimensionally transverse} if the generic point of every irreducible component of $C_0$ maps to the smooth locus of $\mathcal Y_0$, the preimage of the double locus of $\mathcal Y_0$ is a union of nodes of $C_0$ such that at each such node the two branches of $C_0$ map to different irreducible components of $\mathcal Y_0$, and the image of $C_0$ is disjoint from the closure of the triple locus.

We now show that limits of stable maps to the general fiber of a degeneration can always be modified to dimensionally transverse maps. We need some terminology. Let $\mathcal X/B$ be a simple normal crossings degeneration with special point $0\in B$. Given a ramified base change $B'\to B$, the special fiber of the pullback family $\mathcal X_{B'}$ will typically not have simple normal crossings singularities. However, the family is still ``orbifold snc'' or ``toroidal.'' We will be interested in blowups of these pullback families centered in strata in the special fiber, and we call them {\it strata blowups}; we will always base change so that the special fiber is reduced. We call the special fiber of such a blowup an \underline{expanded degeneration}, or simply an \underline{expansion}. See Figure~\ref{fig: exp} for a visualization.
\begin{figure}[h!]
\begin{center}
\includegraphics[scale=0.5]{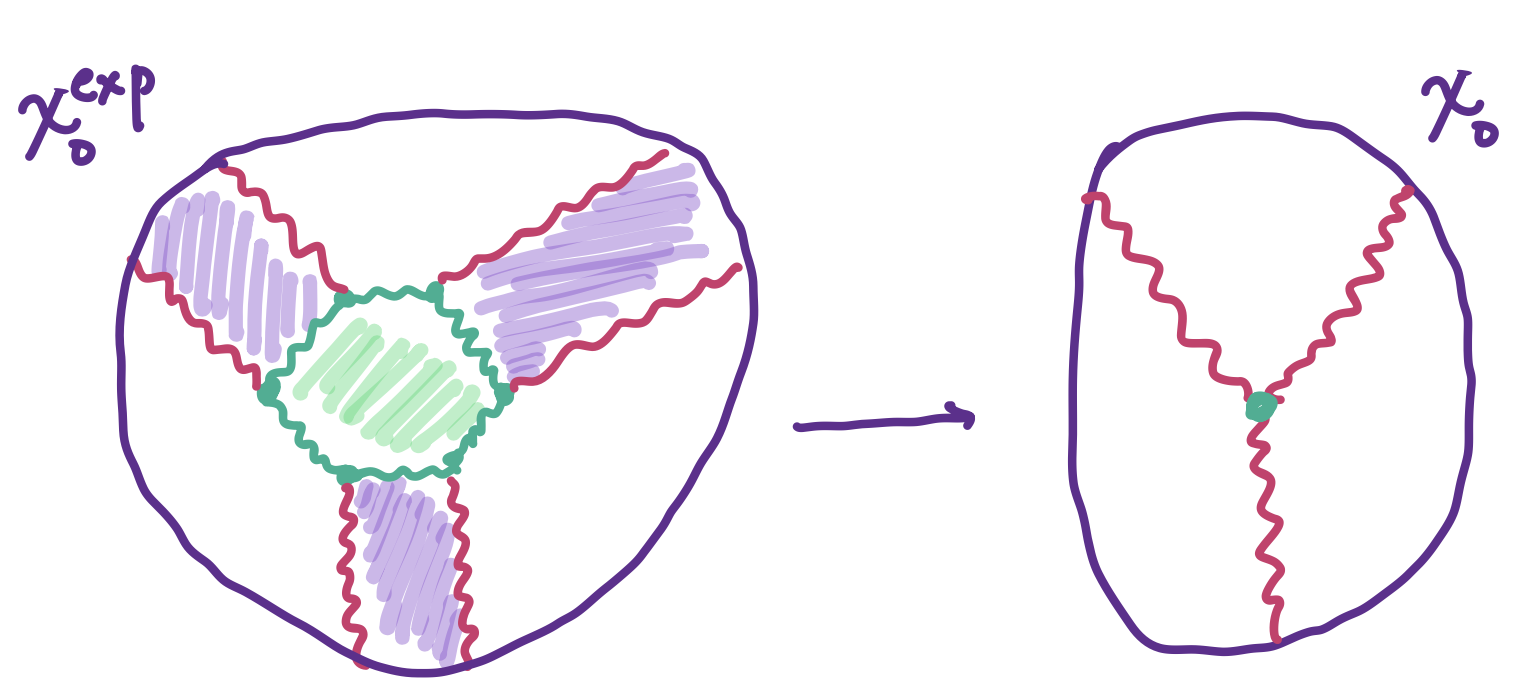}
\end{center}
\caption{Starting with three surfaces meeting at a point, visualized on the right, blowup at the triple point and then each double curve, and base change. The result is an expansion that might be visualized as on the left.}\label{fig: exp} 
\end{figure}

We can now state the theorem. 

\begin{theorem}
Let $\mathcal X \to B$ be a simple normal crossings degeneration with special fiber $X_0$ over $0 \in B$. Let $\Delta$ be the spectrum of a DVR mapping to $B$, with generic point $\eta$ mapping to $B^\times$ and closed point $0_\Delta$ mapping to $0 \in B$. Suppose given a family of stable maps over the generic point $\eta$:
\[
\mathcal C_\eta \longrightarrow \mathcal X_\eta := \mathcal X \times_B \eta.
\]
After a ramified base change $\Delta' \to \Delta$, there exists a strata blowup
\[
\rho : \mathcal X' \to \mathcal X_{\Delta'} := \mathcal X \times_B \Delta'
\] 
and an induced family of stable maps:
\[
\begin{tikzcd}
\mathcal C' \arrow{rr}{f'} \arrow{dr} & & \mathcal X' \arrow{dl}\\
& \Delta' &
\end{tikzcd}
\]
such that the special fiber map
\[
f'_{0_{\Delta'}} : \mathcal C'_{0_{\Delta'}} \longrightarrow \mathcal X'_{0_{\Delta'}}
\]
is dimensionally transverse to the strata of $\mathcal X'_{0_{\Delta'}}$. 
\end{theorem}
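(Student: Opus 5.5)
The strategy is tropical. Take the stable-map limit, record how the curve degenerates relative to the strata of $\mathcal X_0$ as a balanced piecewise-linear map from a metric graph to the cone complex of $\mathcal X_0$, subdivide the cone complex so this image becomes a union of cones, and realise the subdivision as a strata blowup.

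First, properness of $\Mbar_{g,n}(\mathcal X/B,\beta)$ gives a limit after a finite base change. We extend $\mathcal C_\eta\to\mathcal X_\eta$ to a family of stable maps $\mathcal C\to\mathcal X_\Delta$ over $\Delta$, possibly with a non-transverse special fiber. After a further base change we may assume the following: the total space $\mathcal C$ has only $A_k$-type singularities at the nodes of the special fiber, and the generic fiber's preimages of the special fiber components are well defined.

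Next we encode the limit tropically. Let $\Sigma$ be the cone over the dual complex of $\mathcal X_0$, mapping to $\RR_{\ge 0}$, with the fiber $\Sigma_1$ over $1$ a polyhedral complex. Let $\Gamma$ be the dual graph of $\mathcal C_0$, made into a metric graph by giving each node of smoothing type $xy=t^{k}$ length $k$. Using the orders of vanishing along $\mathcal C_0$ of the pullbacks of the Cartier divisors $D_1,\ldots,D_r$ (the components of $\mathcal X_0$), we get a piecewise linear map
\[
\mathsf{trop}(f)\colon \Gamma\to \Sigma_1 .
\]
A vertex $v$ goes to the point with coordinates $\mathrm{ord}_{C_v}(f^\star D_j)$. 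Edges map linearly with integer slopes determined by the local equations at the nodes, and the marked legs and unbounded ends are handled similarly. This is the standard tropicalization; it is well defined because $f^\star D_j$ is a Cartier divisor on $\mathcal C$ supported on the special fiber, away from the generic-fiber contributions. Dimensional transversality of the special fiber is then exactly the tropical statement that every vertex lands in the relative interior of a vertex of the polyhedral structure, and every edge maps onto an edge, or is contracted only if it does not cross a stratum.

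We then choose a subdivision. Take a polyhedral subdivision $\Sigma_1'$ of $\Sigma_1$ such that $\mathsf{trop}(f)(\Gamma)$ is a union of faces of dimension $\le 1$, with the images of all vertices among the vertices of $\Sigma_1'$. After scaling by an integer $N$, which corresponds to the ramified base change $\Delta'\to\Delta$ of degree $N$, all these vertices become lattice points. Refine further to a unimodular or simplicial subdivision without adding vertices on the image. Taking cones gives a subdivision $\Sigma'\to\Sigma$, and by toroidal geometry (Kempf--Knudsen--Mumford--Saint-Donat) this is a strata blowup $\rho\colon\mathcal X'\to\mathcal X_{\Delta'}$ with reduced special fiber, since the vertices are integral at height $1$.

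Finally we lift the map. The rational map $\mathcal C_{\Delta'}\dashrightarrow\mathcal X'$ extends after a toroidal modification of $\mathcal C$: blow up the nodes, which subdivides the edges of $\Gamma$ at the preimages of vertices of $\Sigma_1'$, and then stabilise. Since $\mathsf{trop}(f)$ now factors through the $1$-skeleton compatibly, each new component maps to the open part of a single component of $\mathcal X'_0$. Contracted unstable rational components that map into a single stratum can be contracted, giving a stable map $\mathcal C'\to\mathcal X'$. Transversality is then read off from the tropical picture. Generic points go to open strata. Nodes over double loci have branches mapping to different components, with matching contact orders by balancing. Nothing meets the triple locus, since no vertex or edge image lies in a cone of dimension $\ge 2$ except through its relative interior.

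The main obstacle is this last step. One has to show that the extended map genuinely avoids the codimension-two strata, and not just tropically. In particular, a component could map into a divisor of $\mathcal X'_0$ with its generic point in the double locus. The subtlety is components whose tropical image is a point in the interior of an edge that lies in a higher cone. I would rule this out by adding every vertex image as a vertex of $\Sigma_1'$. Then each component $C_v$ has $f^\star$ of every component of $\mathcal X'_0$ except one vanishing to order $0$ generically, so its generic point lies in the smooth locus. A local computation in toric charts $xy=t^k$ at the nodes would then finish the transversality at the double locus.
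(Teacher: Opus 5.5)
Your argument is correct in outline, but it takes a genuinely different route from the paper's.

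The paper only works tropically with the image, never with the map. It applies Tevelev--Ulirsch to the image surface $\mathcal Z\subset\mathcal X$ of $\mathcal C$. This gives a strata blowup $\mathcal X^\sim$ in which the strict transform $\mathcal Z^\sim$ meets every stratum in the expected dimension, so the surface misses the closure of the triple locus. Deleting that locus leaves a double point degeneration $\mathcal X^{\sim,\circ}$ containing the proper surface $\mathcal Z^\sim$. The paper then quotes Jun Li's double point theorem to produce the remaining expansion and the transverse limit.

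You instead tropicalize the limit map $\Gamma\to\Sigma$ itself. You subdivide so that vertex images are vertices and the image lies in the $1$-skeleton, then pull the corresponding log modification back along $\mathcal C\to\mathcal X$, which inserts rational chains at nodes. Your ``local computation'' closes cleanly. Since $\mathcal C'$ is normal and the pullback of (a Cartier multiple of) each component $X'_\rho$ is supported on $\mathcal C'_0$, a point of $\mathcal C'_0$ lies over $X'_\rho$ exactly when the $\rho$-coordinate is not identically zero on the corresponding vertex or edge. So vertices at vertices and edges on $1$-cells give dimensional transversality.

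What each route buys:
\begin{itemize}
\item The paper's proof is short. It explains conceptually why the snc case reduces to Li's, and its only combinatorial input is a statement about an embedded surface.
\item Yours is self-contained; in effect it redoes Li's argument in all codimensions at once, closer in spirit to \cite{R19}. It reads the expansion directly off $\mathsf{trop}(f)$, which is the same data that later indexes strata of the expanded and logarithmic mapping spaces. It also makes predeformability visible as single-valuedness of edge slopes.
\item The cost of yours is the toroidal functoriality you invoke: log modifications pulling back along a log map, and the stratum of an image being read from the cone of the tropical image.
\end{itemize}

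Some minor points to tidy:
\begin{itemize}
\item The coordinates $\mathrm{ord}_{C_v}(f^\star D_j)$ sum to the ramification index of $\Delta\to B$, not to $1$, so rescale before landing in $\Sigma_1$.
\item Nodes of $\mathcal C_\eta$ need a partial normalization first, since $\mathcal C$ is not normal along their closures.
\item Unimodularity is unnecessary here. It also cannot in general avoid placing vertices on the image, though doing so is harmless.
\item Matching contact orders at nodes come from edge slopes being single valued, not from balancing, and are not needed for this statement.
\end{itemize}
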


With some rearrangement of ideas in the literature, this result is a consequence of~\cite{R19}, but we give a different proof to keep the narrative of the text pointing in the intended direction. 

\begin{proof}
We first note that when $\mathcal X/B$ is a double point degeneration, so that $\mathcal X_0$ has only double point singularities, the theorem was proved by Jun Li~\cite{Li01}. The general case can be reduced to this one by a beautiful theorem of Tevelev, strengthened by Ulirsch~\cite{Tev07,U15}. Let $\mathcal Z\subset \mathcal X$ be the image subvariety of $\mathcal C$. The results of Tevelev and Ulirsch show that we can blow up the total space $\mathcal X^\sim\to \mathcal X$ along strata to ensure that the strict transform 
\[
\mathcal Z^\sim \to \mathcal X^\sim
\]
is dimensionally transverse to the strata; that is, the intersections with strata are either empty or of the expected dimension. We can now delete the locus in $\mathcal X^\sim$ where more than $2$ components of the special fiber meet and call this $\mathcal X^{\sim,\circ}$. Because of dimensional transversality, the subvariety $\mathcal Z^\sim$ factors through this open immersion, so we have
\[
\mathcal Z^\sim\subset \mathcal X^{\sim,\circ}\subset\mathcal X^\sim.
\]
In particular, the space $\mathcal Z^\sim$ is proper over the base. We may now appeal directly to Jun Li's theorem and apply it to the punctured family of stable maps from the general fiber of $\mathcal C$ to the double point degeneration $\mathcal X^{\sim,\circ}$. This produces a further degeneration such that the limiting stable map is transverse in the required sense. Note that Li states the result for projective degenerations, whereas $\mathcal X^{\sim,\circ}$ is only quasi-projective. However, projectivity is used only to guarantee the existence of a limit of a stable map and the properness of $\mathcal Z^\sim$ is sufficient to guarantee this; the procedure of~\cite[Section~3]{Li01} to construct a transverse limit can be followed verbatim.
\end{proof}

\subsection{Predeformable stable maps}

Given a stable map to the general fiber of $\mathcal X$, after a base change and blowup, we have seen that the limiting stable map is dimensionally transverse. An important discovery of Jun Li, however, is that these limits—when dimensionally transverse—always satisfy an additional condition called \underline{predeformability}, see~\cite[Section~2]{Li01}. 

Let $\mathcal X/B$ be an snc degeneration with special fiber $\mathcal X_0$. Suppose we have a stable map
\[
C_0\to \mathcal X_0
\]
that is \underline{dimensionally transverse}, so that, in particular, the preimage of each connected component of the double locus of $\mathcal X_0$ is a union of nodes. Let $q$ be a node of $C_0$. Note that $q$ cannot be a self-node of a component of $C_0$, since such a map would violate dimensional transversality according to our definition. Let $C_1$ and $C_2$ be the two components that meet at $q$, and let $X_1$ and $X_2$ be the components of $\mathcal X_0$ containing the images of $C_1$ and $C_2$, so that $q$ lies in $X_1\cap X_2$. Let $q_1$ and $q_2$ be the two preimages of $q$ under normalization.

We maintain the above notation for the following:

\begin{definition}
A dimensionally transverse stable map $f_0\colon C_0\to\mathcal X_0$ is called \underline{predeformable at $q$} if the tangency $c_i$ of
\[
(C_i,q_i)\to (X_i, X_1\cap X_2)
\]
at $q_i$ along the divisor $X_1\cap X_2$ is independent of $i$; in other words, if $c_1=c_2$. If $[f_0]$ is predeformable at all nodes mapping to the double locus of $\mathcal X_0$, then we say that $[f_0]$ is \underline{predeformable}. 
\end{definition}

See Figure~\ref{fig: dim-trans} below for a cartoon of dimensionally transverse and non-transverse maps.
\begin{figure}[h!]
\begin{center}
\includegraphics[scale=0.4]{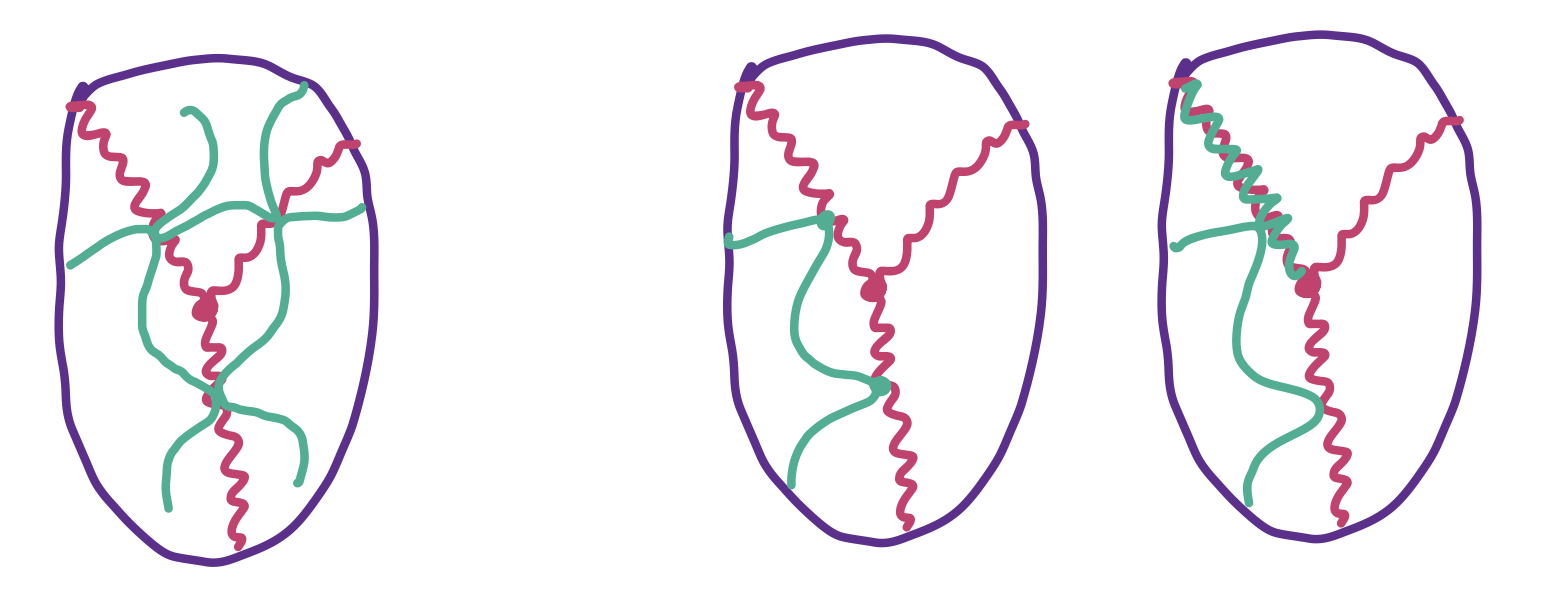}
\end{center}
\caption{The curve on the left is dimensionally transverse, while the two on the right are not.}\label{fig: dim-trans}
\end{figure}

The key point is that at each node of the curve $C_0$, there is a well-defined tangency order of $C_0$ with the double locus of $\mathcal X_0$, possibly $0$ when the node maps to the smooth locus of $\mathcal X_0$. 

As a non-example, we can take $\mathcal X\to B$ to be the standard double point degeneration of a conic in $\mathbb P^2$ given by $XY-tZ^2$. The special fiber $\mathcal X_0$ has two components $X_1$ and $X_2$, both isomorphic to $\mathbb P^1$. We can take $C$ to be $\mathbb P^1$ union $\mathbb P^1$ glued along a single node. Map one component isomorphically onto $X_1$ and the second component by a degree $2$ map ramified over the node $X_1\cap X_2$. This map is dimensionally transverse but fails to be predeformable, see Figure~\ref{fig: non-predef}.
\begin{figure}[h!]
\begin{center}
\includegraphics[scale=0.3]{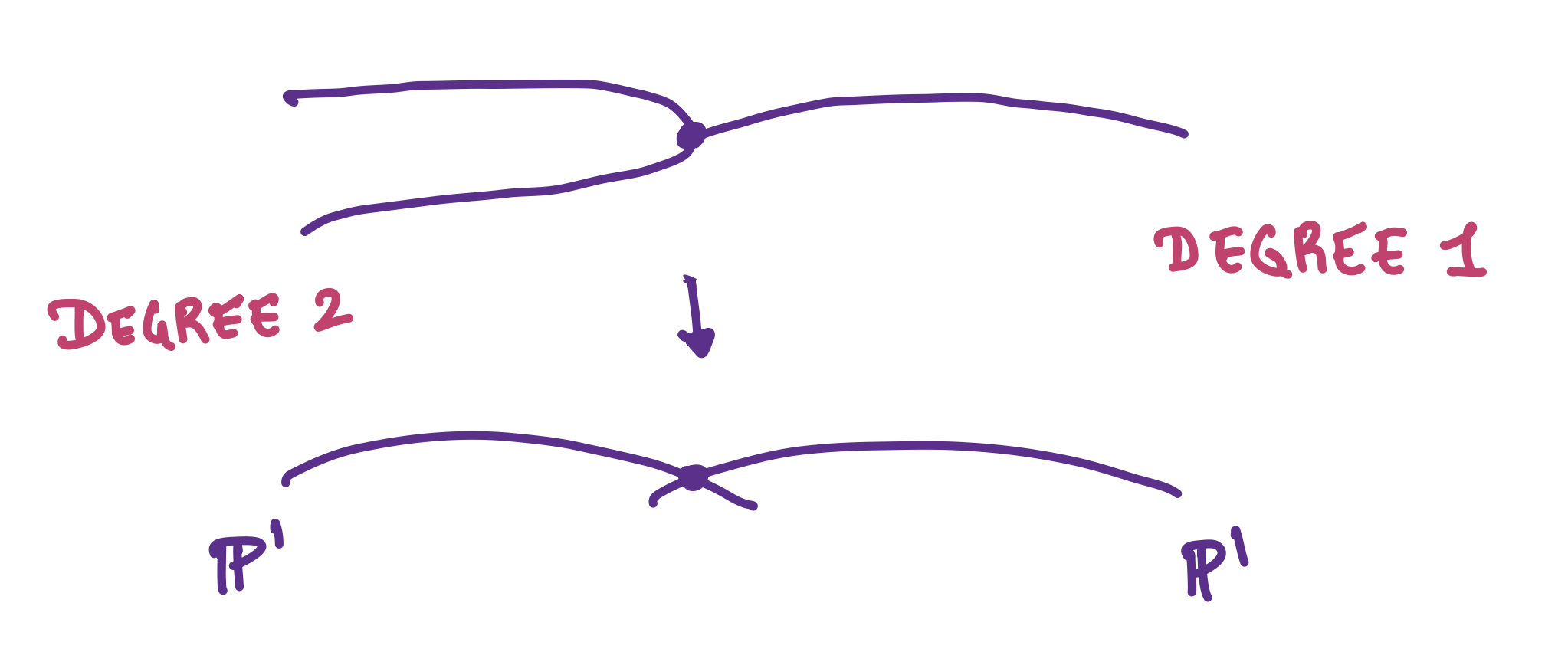}
\end{center}
\caption{A non-predeformable map.}\label{fig: non-predef}
\end{figure}

We flag an important point -- the predeformability condition is local on $\mathcal X_0$ and on the curve. In particular, the condition makes sense on the special fiber $\mathcal A_{B,0}$ of the Artin stack constructed previously. The importance of the locality of the condition is that -- like our ``cheap'' approach, it is essentially independent of the target geometry. The importance of this was already understood by Li~\cite{Li01}.

\begin{theorem}
Suppose $f_0\colon C_0\to \mathcal X_0$ is the stable maps limit of a family of stable maps to the general fiber $\mathcal X_\eta$ of $\mathcal X$. Then if $f_0$ is dimensionally transverse, then it is predeformable. 
\end{theorem}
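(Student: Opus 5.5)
The plan is to argue locally near a node $q$ of $C_0$ that maps to the double locus $X_1\cap X_2$. Let $\mathcal C\to\Delta$ be the family of stable maps over a DVR whose special fiber is $f_0$ and whose generic fiber maps to $\mathcal X_\eta$. Replacing $B$ by $\Delta$ does no harm, since the claim only concerns the completed local rings at $q$ and at $f_0(q)$. Étale locally near $f_0(q)$ the total space $\mathcal X_\Delta$ has the form
\[
xy = t^{k}, \qquad t \text{ a uniformizer of } \Delta,
\]
times a smooth factor. Here $x$ and $y$ are local equations for $X_1$ and $X_2$, and $k\geq 1$ records the ramification of $\Delta\to B$. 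Near $q$ the surface $\mathcal C$ has the local form
\[
uv = t^{m}
\]
for some $m\geq 1$. This holds because $\mathcal C_\eta$ is smooth at the generic points near $q$, so the node is smoothed in the family. We may take $u$ as a coordinate on $C_1$ near $q_1$ and $v$ as a coordinate on $C_2$ near $q_2$.

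The key step is to pull back $x$ and $y$. By dimensional transversality, $f^\star x$ vanishes on $\mathcal C_0$ only along the branch $C_2$ (the branch mapping into $X_2$) and is a unit times $u^{c_1}$ along $C_1$. Symmetrically, $f^\star y$ restricts to a unit times $v^{c_2}$ along $C_2$. Here $c_i$ is exactly the tangency order of $(C_i,q_i)\to(X_i,X_1\cap X_2)$.

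To make this rigorous, I would note that $\widehat{\mathcal O}_{\mathcal C,q}=R[[u,v]]/(uv-t^m)$ is a normal domain. Since the generic fiber maps into the open locus where $x,y\neq 0$, the element $f^\star x$ is nonzero with divisor supported on $\mathcal C_0$ near $q$. The only divisors there are $\{u=0\}$ and $\{v=0\}$, and these are $\QQ$-Cartier with $m\{u=0\}=\operatorname{div}(t)-m\{v=0\}$. Working with monomials, I would write
\[
f^\star x=\alpha\, u^{a}t^{b}, \qquad f^\star y=\beta\, v^{a'}t^{b'},
\]
with $\alpha,\beta$ units, after absorbing powers of $t$ appropriately. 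Every element of the completed local ring is a unit times a monomial $u^iv^jt^\ell$ by normality and the divisor computation, and $u^iv^j$ with $i,j>0$ can be reduced using $uv=t^m$.

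Restricting to $C_1$ (where $v=0$ and $t=0$) forces $b=0$ and $a=c_1$. Restricting to $C_2$ forces $b'=0$ and $a'=c_2$. Then the relation $xy=t^k$ gives
\[
\alpha\beta\, u^{c_1}v^{c_2}=t^{k}.
\]
Comparing orders along the divisor $\{u=0\}$, or alternatively comparing valuations at the generic points of the two branches, gives $c_1 = c_2$ (both equal to $k/m$).

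The main obstacle is justifying the monomial normal form for $f^\star x$ and $f^\star y$. This amounts to the statement that a function on the $A_{m-1}$ singularity $uv=t^m$ that is invertible off $\{t=0\}$ is a unit times a monomial. I would prove it by the divisor-class argument above: the local class group is $\ZZ/m$, generated by $\{u=0\}$.

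A cleaner way to organize the final comparison is to use the valuations $\operatorname{ord}_{C_1}$ and $\operatorname{ord}_{C_2}$ and the equation $\operatorname{div}(t)=m\{u=0\}+\ldots$ locally. Intersecting everything with a general arc through $q$ then reduces the comparison to arithmetic of exponents.

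As a check, I would verify the argument on the conic example $XY-tZ^2$. There the degree-$2$ ramified component gives $c_2=2\neq 1=c_1$, and the argument should show that such a limit cannot occur.
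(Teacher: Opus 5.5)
Your proposal is correct and is essentially the paper's argument. The paper also localizes at the node to the models $xy=t^k$ (target, after base change) and $uv=t^m$ (curve); its claim that the maps are toric up to units, and hence encoded by a map of cones, is exactly your unit-times-monomial normal form; and the ``pleasant exercise'' it leaves is your comparison of orders along the two branches, giving $c_1=c_2=k/m$.

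Only cosmetic repairs are needed:
\begin{itemize}
\item You declare that $x$ cuts out $X_1$, but you then use it as the equation of $X_1\cap X_2$ inside $X_1$, so swap the roles of $x$ and $y$.
\item The correct divisor relations are $\operatorname{div}(u)=m\{u=0\}$ and $\operatorname{div}(t)=\{u=0\}+\{v=0\}$.
\item The paper also takes as given that $q$ is smoothed in the family, and your stated reason does not prove it, but the same argument does. On a persisting branch $R[[u]]$, the relation $f^\star x\cdot f^\star y=t^k$ forces both $f^\star x$ and $f^\star y$ to be units times powers of $t$. That contradicts either $f_0(q)\in X_1\cap X_2$ or dimensional transversality.
\end{itemize}
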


\begin{proof}
This result is proved by Li~\cite{Li01}, but we give a sketch using toric geometry. Since the statement is local at each node of the curve, we may replace $C_0$ by $\mathbb V(xy)\subset \mathbb A^2_{xy}$. We may also work locally on $\mathcal X$ near the image of the node $q$; in fact, since the statement involves only the orders of vanishing of the components containing $q$, we may replace the target family by
\[
\mathbb A_{uv}^2\to\mathbb A^1_t
\]
where the map is given by $t = uv$. Up to irrelevant smooth directions that we can project away from, this is the local form of a semistable degeneration in a neighborhood of the double locus. 

By hypothesis, we obtain a smoothing of the curve near $q$. By the deformation theory of nodal curves, the smoothing family is given locally by $xy = s^k$, where $k$ is a positive integer. Thus, locally, the smoothing family fits into the diagram:
\[
\begin{tikzcd}
\{xy - s^k = 0\} \arrow{d}\arrow{r} & \mathbb A^2_{uv}\arrow{d}\\
\mathbb A^1_s\arrow{r} &\mathbb A^1_t.
\end{tikzcd}
\]
All the spaces in this diagram are toric. One checks easily that, by hypothesis, under each arrow the preimage of the toric boundary is contained in the toric boundary. Therefore, up to units, we can encode the map via the corresponding map of cones. Note that by the dimensional transversality hypothesis, the two rays in the fan of $\{xy - s^k = 0\}$ map onto the two rays of the fan of $\mathbb A^2$. See Figure~\ref{fig: fan-smoothing}.
\begin{figure}[h!]
\begin{center}
\includegraphics[scale=0.3]{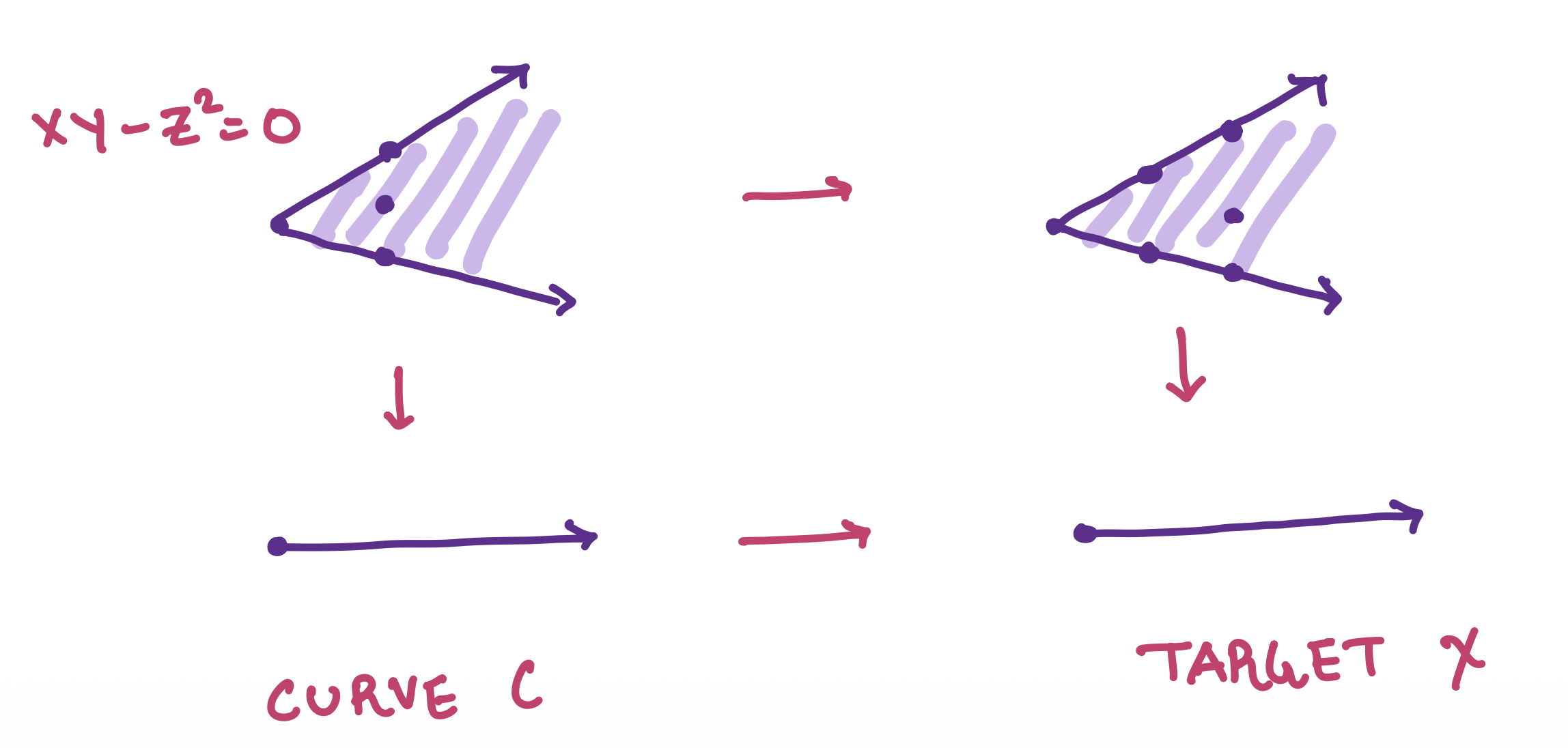}
\end{center}
\caption{The picture at the level of fans of a smoothing predeformable map.}\label{fig: fan-smoothing}
\end{figure}
The predeformability statement now follows by computing the tangency orders using fans. We leave this pleasant exercise to the reader. 
\end{proof}

If $\mathcal X\to B$ is a general degeneration, the converse of the theorem above will typically fail. The predeformability condition is only local, but there can be global obstructions to smoothing. However, we have the following fact, essentially just a perspective on a beautiful observation of Abramovich--Wise~\cite{AW}. 

Recall from our ``cheap trick'' definition of the space of logarithmic maps that we constructed a relatively $0$-dimensional degeneration $\mathcal A_B\to B$ of Artin stacks, built by pulling back a map $[\mathbb A^r/\mathbb G_m^r]\to[\A^1/\mathbb G_m]$ along the tautological map $B\to [\A^1/\mathbb G_m]$. 

\begin{theorem}[Abramovich's land of fairytales]
Let $C_0$ be a nodal curve and $f_0\colon C_0\to \mathcal A_{B,0}$ is a map. Suppose there exists an expansion $\mathcal A'_{B,0}\to \mathcal A_{B,0}$, a destabilization $C_0'\to C_0$ that contracts chains of $2$-pointed rational curves, and a commutative square
\[
\begin{tikzcd}
C_0'\arrow{r}{f_0'}\arrow{d} & \mathcal A'_{B,0}\arrow{d}\\
C_0\arrow{r}{f_0} & \mathcal A_{B,0}.
\end{tikzcd}
\]
such that $f_0'$ is predeformable. Then $[f_0\colon C_0\to \mathcal A_{B,0}]$ is smoothable. Precisely, there is a smoothing of $C_0$ over the spectrum $\Delta$ of a DVR and a family of maps
\[
\begin{tikzcd}
\mathcal C\arrow{d}\arrow{r}{f} & \mathcal A_B\arrow{d}\\
\Delta\arrow{r} & B,
\end{tikzcd}
\]
where the generic point of $\Delta$ maps to the generic point of $B$, such that the special fiber is $[f_0]$.
\end{theorem}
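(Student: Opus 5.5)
The plan is to exploit the fact that $\mathcal A_B$ is relatively $0$-dimensional and toric (a quotient of affine space by a torus), so that the existence of a map to it is a question of line bundles and sections, while the smoothing itself can be built one node at a time using toric local models.

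We reduce first to the expanded map $f_0'$. An expansion $\mathcal A'_{B,0}$ is the special fiber of a strata blowup $\mathcal A'_{B'}$ of a ramified base change $\mathcal A_{B'}$. Suppose we can smooth $f_0'$ to a family $\mathcal C'\to\mathcal A'_{B'}$ over a DVR $\Delta$. Composing with $\mathcal A'_{B'}\to\mathcal A_{B'}\to\mathcal A_B$ gives a family $\mathcal C'\to\mathcal A_B$. We then contract the destabilizing chains of $2$-pointed rational curves in $\mathcal C'$ to obtain $\mathcal C$ with special fiber $C_0$.

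The descent of the map to $\mathcal C$ is where the choice of target pays off. A map to $[\mathbb A^r/\mathbb G_m^r]$ is just $r$ pairs $(L_j,s_j)$, that is, $r$ effective Cartier divisors. So we only need the pulled-back divisors on $\mathcal C'$ to descend to Cartier divisors on $\mathcal C$. Contracting a chain of rational curves replaces an $A_{k_1}\cup\dots$ configuration by a single $A_k$ singularity $xy=s^k$. Such a contraction is a toric blowdown. A torus-invariant divisor descends precisely when its restriction to the chain is piecewise linear with the right slopes, which is exactly what predeformability and the matching tangency data encode. On the special fiber the result agrees with $f_0$ because the square commutes.

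It remains to smooth $f_0'$ itself. The conditions have three layers.
\begin{itemize}
\item The curve must be smoothed so that the node at $q$ acquires local equation $xy=s^{k_q}$.
\item There must be a map to $\mathcal A'_{B'}$, i.e. Cartier divisors $D_j$ on $\mathcal C$ with $\sum_j D_j$ equal to the pullback of the special fiber. Their restriction to the central fiber should give the components and the tangency orders prescribed by $f_0'$.
\item Away from nodes, the sections only need to vanish to the correct order along the markings/components, which is unobstructed locally.
\end{itemize}
Because the target is a stack of line bundles with sections, there is no global obstruction coming from the geometry of a target variety; this is the fairytale.

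Concretely, we choose integers $k_q$ and a ``slope'' datum. To each component $C_v$ of $C_0'$ we assign the expansion component it maps to, whose position is recorded by a vector in the cone complex of $\mathcal A'_B$. The node multiplicities must satisfy $\text{position}(v)-\text{position}(w)=c_q\cdot k_q\cdot(\text{direction})$, with $c_q$ the common tangency. This is a tropical map, and predeformability (equal $c_1=c_2$) is exactly what makes the edge lengths consistent. After a further base change we can clear denominators. Then on the total space $\mathcal C$ of a smoothing with these $k_q$, each $D_j$ is defined as the torus-invariant divisor given by the piecewise linear function on the dual graph. It is Cartier by the local toric computation of the previous theorem, run in reverse at each node. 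We glue these local divisors with the sections $s_j$ on each component, using that the restrictions are determined up to units.

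The main obstacle I expect is the existence of compatible edge lengths $k_q$ when the dual graph has cycles. The positions must close up around every loop, which is a linear condition on the $k_q$. It is not automatic from local predeformability, and it is where expansion data, not just $f_0$, is used. I would try first to read off the positions directly from which expansion component each $C_v$ maps to; the expansion comes from an honest blowup, so these positions are global and the loop condition holds tautologically. Choosing $k_q=(\text{length of the edge in the expansion})/c_q$, made integral by base change, should then work.
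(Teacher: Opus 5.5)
Your proposal is correct and follows essentially the paper's route. You smooth the curve with node models $xy=s^{k_q}$ dictated by the tangency orders, and you define the map to $\mathcal A_B$ by Cartier divisors supported on the special fiber, with multiplicities read off from which component each $C_v$ maps to. In the paper's unexpanded case every multiplicity is $\ell$, the least common multiple of the $m_q$, and $k_q=\ell/m_q$, so the loop condition is automatic, exactly as you anticipate. A nontrivial expansion is then handled by smoothing $f_0'$ and contracting the chains.

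One small correction concerns the descent of the divisors across a contracted chain, which needs degree zero (no bending) on each chain component. This comes from the commutativity of the square, because the composite to $\mathcal A_{B,0}$ is constant on those chains. It does not come from predeformability, which only equalizes the tangency orders on the two branches of each individual node.
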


A wonderful thing about this theorem is that the proof is very down to earth. One can just directly construct a smoothing! The basic idea is that there is already a smoothing of $\mathcal A_B'$, and because the map is dimensionally transverse, we can write down a local model for a smoothing of $C_0$ near each node. Since deformation theory of curves is unobstructed, this gives a global smoothing of $C_0$. As we have already seen, to build a map to $\mathcal A_B$ all we need to do is specify some Cartier divisors, and the dimensional transversality means this is straightforward. Now the details. 

\begin{proof}
Let us first assume that $\mathcal A_{B,0}'$ coincides with $\mathcal A_{B,0}$. The general case is similar and we comment on it below. Let $q$ be a node of $C_0$ that maps to the singular locus. The morphism to $\mathcal A_{B,0}$ has a well-defined tangency at $q$, obtained by normalizing and measuring the tangency of the node with the double locus; this is well defined by predeformability. Denote this integer by $m_q$. By elementary deformation theory of nodal curves, there is a smoothing of $C_0$ to a curve
\[
\mathcal C\to \Delta = \Spec R
\]
such that the local model at a node $q$ as above is given by $xy = t^{1/m_q}$, where $R\supset \mathbb C\llbracket t\rrbracket$ is a finite extension containing all the necessary roots. One may choose any such extension; for concreteness, we take $R = \mathbb C\llbracket t^{1/\ell}\rrbracket$, where $\ell$ is the least common multiple of the $m_q$. For the nodes that do not map to the singular locus, we choose the smoothing arbitrarily. In fact, these nodes do not affect the remainder of the argument, so we ignore them. Now choose the map $\Delta\to B$ so that the function cutting out $0\in B$ pulls back to $t$ in $R$; note that $t$ is not a uniformizer in $R$, since $R$ contains fractional powers of $t$. 

We must now construct a map
\[
\mathcal C\to \mathcal A_B.
\]
Since $\mathcal A_B = [\mathbb A^r/\mathbb G_m^r]\times_{[\mathbb A^1/\mathbb G_m]} B$, it suffices to construct a map to $[\mathbb A^r/\mathbb G_m^r]$ commuting with $\mathcal C\to B\to[\mathbb A^1/\mathbb G_m]$. In other words, for each coordinate divisor of $[\mathbb A^r/\mathbb G_m^r]$, we must describe the Cartier divisor on $\mathcal C$ to which it pulls back. Geometrically, for the special fiber of the map to agree with $[f_0]$, equidimensionality implies the following: the pullback of a component of $\mathcal A_{B,0}$ to $\mathcal C$ is a positive linear combination of components of $C_0$ whose generic points map to that component. It remains to determine the multiplicities. This is straightforward: at the generic point of a component of $\mathcal A_{B,0}$, that component is the scheme-theoretic preimage of $0$ in $B$. The function cutting out $0$ pulls back to $t$, and $t^{1/\ell}$ vanishes along the corresponding component of $C_0$ with multiplicity $1$. Note that the total space of the smoothing $\mathcal C$ may have quotient singularities. However, for instance by toric geometry, these divisors can be seen to be Cartier on $\mathcal C$ and so determine a map to $\mathcal A_B$. By construction it agrees with $[f_0]$. 

When the expansion $\mathcal A'_{B,0}\to \mathcal A_{B,0}$ is nontrivial, we can proceed similarly. The same argument shows that $f'_0$ has a smoothing with domain $\mathcal C'$. This implies that $f_0$ has a smoothing by stabilizing the map to $\mathcal C\to\mathcal A_B$. Indeed, this is just the fact that $\mathcal C'$ is a destabilization of some deformation $\mathcal C$ of $C_0$. 
\end{proof}

Recall now that we defined the stack of stable maps by a pullback diagram:
\[
\begin{tikzcd}
\Mbar^{\sf ch}_{g,n}(\mathcal X/B,\beta)\arrow{r}\arrow{d} & \mathfrak M^{\sf ch}_{g,n}(\mathcal A_{B}/B) \arrow{d} \\
\Mbar_{g,n}(\mathcal X/B,\beta)\arrow{r} & \mathfrak M_{g,n}(\mathcal A_B/B)
\end{tikzcd}
\]
where $\mathfrak M^{\sf ch}_{g,n}(\mathcal A_{B}/B)$ is exactly the set of stable maps to $\mathcal A_{B,0}$ that are smoothable. But by the theorem above, this equivalently means that (after destabilization) the map factorizes as a predeformable map through an expansion. Furthermore, given such an expansion, we can pull back along the smooth map $\mathcal X_0\to \mathcal A_{B,0}$ to get an expansion, so we have
\[
\begin{tikzcd}
C_0' 
  \arrow{dr} 
  \arrow[bend left=20]{rrd}
  \arrow[bend right=20]{rdd} 
& &  \\
& \mathcal X'_0 \arrow{d} \arrow{r} 
& \mathcal X_0 \arrow{d} \\
& \mathcal A_{B,0}' \arrow{r} 
& \mathcal A_{B,0}
\end{tikzcd}
\]
The lower of the curved arrows is provided by hypothesis -- because the map is smoothable, there exists a factorization through some expansion. The upper curved arrow is given of course. The map to the expansion $\mathcal X_0'$ is predeformable -- indeed the condition is local and descends along smooth maps. Putting all this together, we have:

\begin{theorem}[Log maps factor through expansions]
The space $\Mbar^{\sf ch}_{g,n}(\mathcal X/B,\beta)$ is the closed substack of the mapping stack $\Mbar_{g,n}(\mathcal X_0,\beta)$ comprising maps $C_0\to \mathcal X_0$ that, after a possible destabilization $C_0'\to C_0$, factorize through a predeformable map 
\[
C_0'\to \mathcal X_0'\to \mathcal X_0
\] 
with the first map predeformable. 
\end{theorem}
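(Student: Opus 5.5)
The plan is to prove the two inclusions separately, working entirely with the pullback definition of $\Mbar^{\sf ch}_{g,n}(\mathcal X/B,\beta)$. Since that square is Cartesian and $\mathfrak M^{\sf ch}_{g,n}(\mathcal A_B/B)\hookrightarrow \mathfrak M_{g,n}(\mathcal A_B/B)$ is a closed immersion (a closure), $\Mbar^{\sf ch}_{g,n}(\mathcal X/B,\beta)$ is a closed substack of $\Mbar_{g,n}(\mathcal X/B,\beta)$. Over $0$ it consists of those maps $C_0\to\mathcal X_0$ whose composite $C_0\to\mathcal X_0\to\mathcal A_{B,0}$ lies in the closure of the maps to the general fiber. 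The task is then to identify this closure set-theoretically, on geometric points, with the maps that factor predeformably through an expansion after destabilization. I work with geometric points because closed substacks of the reduced kind are determined by their points; I will only assert the equality at the level of underlying closed substacks.

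\emph{Inclusion ``$\subset$''.} Let $[f_0]$ be a point of the closure. Because $\mathfrak M_{g,n}(\mathcal A_B/B)$ is locally of finite type, a point in the closure of the open locus over $B^\circ$ is the specialization of a family over the spectrum $\Delta$ of a DVR whose generic point lies over $B^\circ$. The composite $C_0\to\mathcal A_{B,0}$ is therefore the limit of a map over $\eta$. To produce an expansion I apply the transversalization theorem, but to $\mathcal A_B$ rather than $\mathcal X$. Tevelev--Ulirsch and Li's procedure are local and toric in nature, and $\mathcal A_B$ is étale locally a quotient of a toric degeneration, so after a ramified base change they give:
\begin{enumerate}[(a)]
\item a strata blowup $\mathcal A'_B$;
\item a family $\mathcal C'$ whose special fiber maps dimensionally transversally to $\mathcal A'_{B,0}$.
\end{enumerate}
By the predeformability theorem this special-fiber map is predeformable; its proof is purely local and so applies verbatim over $\mathcal A_B$. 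The domain $\mathcal C'_0$ stabilizes to $C_0$, and the map $\mathcal C'_0\to C_0$ contracts only chains of $2$-pointed rational curves, since the underlying maps to $\mathcal X$ agree. Pulling the expansion back along the smooth strict map $\mathcal X_0\to\mathcal A_{B,0}$ gives $\mathcal X'_0$, exactly as in the diagram preceding the statement. Predeformability is local and is preserved by smooth pullback, so $C_0'\to\mathcal X'_0$ is predeformable.

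\emph{Inclusion ``$\supset$''.} Suppose $C_0'\to\mathcal X_0'\to\mathcal X_0$ is given with the first map predeformable. Compose with the smooth map $\mathcal X'_0\to\mathcal A'_{B,0}$, which is pulled back from $\mathcal A_{B,0}'$. Predeformability only concerns tangency orders along the double locus, and these are detected on $\mathcal A'_{B,0}$, so the composite remains predeformable. Abramovich's land of fairytales theorem then shows that the induced map $C_0\to\mathcal A_{B,0}$ is smoothable over a DVR with generic point mapping to $B^\circ$. Hence it lies in $\mathfrak M^{\sf ch}_{g,n}(\mathcal A_B/B)$, and by the Cartesian square $[f_0]$ lies in $\Mbar^{\sf ch}_{g,n}(\mathcal X/B,\beta)$.

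The main obstacle is the ``$\subset$'' step. The transversalization theorem was stated for honest snc degenerations $\mathcal X\to B$, and its proof uses the image subvariety and properness. For the Artin stack $\mathcal A_B$ I need to justify transversalization in a different way. I would first try to avoid the stack entirely. Smoothability of the composite to $\mathcal A_B$ does not obviously lift to a smoothing into $\mathcal X$, so instead I lift the family $\mathcal C_\eta\to\mathcal A_{B,\eta}=B^\circ$ to a smooth chart: choose a strict toric or smooth local model $\mathcal Y\to\mathcal A_B$ near the relevant strata. Then I transversalize the family of Cartier divisors on $\mathcal C$ by purely tropical means, namely the subdivision of the cone complex induced by the tropicalization of the family. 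Li's construction, applied to these local charts, can then be glued, since only combinatorial data (the cones and the tangency orders) enter.
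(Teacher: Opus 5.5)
Your proposal follows the paper's argument. The paper also uses the Cartesian square to reduce to identifying $\mathfrak M^{\sf ch}_{g,n}(\mathcal A_B/B)$ with maps that factor predeformably through an expansion after destabilization: Abramovich's land of fairytales gives one inclusion, transversalization plus predeformability gives the other (tacitly applied to $\mathcal A_B$, a point you rightly flag), and the expansion is then pulled back along the smooth strict map $\mathcal X_0\to\mathcal A_{B,0}$.

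One correction. The smoothing over $\Delta$ maps only to $\mathcal A_B$, and limits in the non-separated stack $\mathfrak M_{g,n}(\mathcal A_B/B)$ are not unique. So the contraction $C_0'\to C_0$, and the compatibility needed to land in $\mathcal X_0'=\mathcal X_0\times_{\mathcal A_{B,0}}\mathcal A'_{B,0}$, cannot come from ``the underlying maps to $\mathcal X$ agreeing.'' Instead, build $\mathcal C'$ directly as a modification of $\mathcal C$: base change, then toric resolution at the nodes dictated by a subdivision containing the tropical image, as your last paragraph suggests.
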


Note that this factorization property determines the stack structure on $\Mbar^{\sf ch}_{g,n}(\mathcal X/B,\beta)$. Indeed, the factorization property equivalently picks out $\mathfrak M_{g,n}^{\sf ch}(\mathcal A_B/B)$ inside the stack of maps $\mathfrak M_{g,n}(\mathcal A_B/B)$, and it is reduced. 

\subsection{The stack of expansions} 

The discussion above shows that each point of $\Mbar^{\sf ch}_{g,n}(\mathcal X/B,\beta)$ factors through an expansion. One would like a {\it family} of expansions
\[
\mathcal X^{\sf exp}\to \Mbar^{\sf ch}_{g,n}(\mathcal X/B,\beta)\to B
\]
through which all maps factor. By doing some simple examples, one can observe that the fibers of such a proposed family can fail to vary flatly over the base. However, after a suitable birational modification of $\Mbar^{\sf ch}_{g,n}(\mathcal X/B,\beta)$, such a family does exist. In fact, this modification can be constructed directly, without reference to the space above; this was Jun Li's approach in the double point case. While there are additional technical challenges, Li's method extends to the general setting~\cite{R19}. We briefly summarize this approach.

The key construction is that of a ``stack of expansions''. Let $\mathcal X \to B$ be an snc degeneration with smooth total space. In~\cite{MR20,MR23}, a stack 
$\mathsf{Exp}_{\mathcal X/B}$ over $B$ is constructed. It parametrizes certain expanded 
degenerations of $X/B$ in the sense we have described above -- obtained by base changes and strata blowups. For $f\colon S\to B$, an object of $\mathsf{Exp}_{\mathcal X/B}(S)$ 
is a flat family
\[
\mathcal X^{\sf exp} \to S
\]
and a diagram
\[
\begin{tikzcd}
\mathcal X^{\mathsf{exp}} \arrow[r] \arrow[d] & \mathcal X \arrow[d] \\
S \arrow[r] & B
\end{tikzcd}
\]
such that for every geometric point $s \in S$, the fiber 
$\mathcal X^{\mathsf{exp}}_s$ is obtained from the fiber $\mathcal X_{f(s)}$ as follows:

\begin{itemize}
\item If $\mathcal X_{f(s)}$ is smooth, then 
$\mathcal X^{\mathsf{exp}}_s \cong \mathcal X_{f(s)}$.

\item If $\mathcal X_{f(s)}$ is singular, i.e. if $f(s) = 0\in B$, then 
$X^{\mathsf{exp}}_s$ is an expansion of $\mathcal X_{f(s)}$ 
in the sense described above.
\end{itemize}

The $\mathsf{Exp}_{\mathcal X/B} \to B$ is relatively $0$-dimensional. Informally, we view the stack $\mathsf{Exp}_{\mathcal X/B}$ as the moduli stack of all expansions of $\mathcal X\to B$. However, there are two important features that play a prominent role in the snc story that do not appear in Jun Li's original theory.

\begin{itemize}
\item The stack $\mathsf{Exp}_{\mathcal X/B} \to B$ is not unique, but rather, requires an auxiliary polyhedral choice -- it is a choice of rational polyhedral complex structure on a topological space. The choice does not affect the theory (meaning the cycles, invariants, etc) in a meaningful way, but it does produce a different space. 
\item As a by-product of the choice, given a point of $\mathsf{Exp}_{\mathcal X/B}$, the expansion $\mathcal X^{\sf exp}$ over it carries an additional decoration. Precisely, certain $\mathbb P^1$-bundle components are designated as \underline{tube components}. When studying the stable map moduli space, an additional stability condition must be forced at these components. 
\end{itemize}

Let us explain the last point in slightly different terms. If $\mathcal X_0^{\sf exp}\to \mathcal X_0$ is an expansion, we say that a component is a \underline{$\mathbb P^1$-bundle component} if there is a contraction
$$
\mathcal X_0^{\sf exp}\to \overline{\mathcal X}_0^{\sf exp}
$$
to another expansion that exhibits this component as a $\mathbb P^1$-bundle over a stratum of $\overline{\mathcal X}_0^{\sf exp}$.

A point of the stack of expansions consists of an expansion together with a designation of \underline{some} subset of these $\mathbb P^1$-bundle components as \underline{tubes}. In other words, over every geometric point of the stack of expansions, we have not only an expansion $\mathcal X_0^{\sf exp}$ but also a contraction of the tubes to another expansion
\[
\mathcal X_0^{\sf exp}\to \overline{\mathcal X}_0^{\sf exp}.
\]
Again, this contracts only \underline{some} $\mathbb P^1$-bundle components, not all of them---only those predesignated as tubes.

\begin{example}[Jun Li's stack of expansions]
Suppose $\mathcal X\to B$ is a degeneration with $\mathcal X_0$ consisting of two components $Y_1$ and $Y_2$ meeting transversely at $D$. Then we can express the stack $\mathsf{Exp}_{\mathcal X/B} \to B$  very explicitly. Away from $0$ it is just a point, since there is no way to expand a smooth fiber of $\mathcal X/B$. We describe the special fiber by the objects it parameterizes as a set, then with a topology, and finally its isotropy groups. This does not fully describe the stack of course but should give the reader a feel for what is going on. 

First, an expansion in this geometry is determined by its {\it length} $k \ge 0$, and the length $k$ expansion is
\[
X_0[k]
=
Y_1 \;\cup\; P_1 \;\cup\; \cdots \;\cup\; P_k \;\cup\; Y_2,
\]
where each
\[
P_i = \mathbb{P}\big(\mathcal{O}_D \oplus N_{D/Y_1}\big)
\]
is a $\mathbb{P}^1$-bundle over $D$, and the components are glued in a chain
along their zero and infinity sections.

Now, over the special point $0 \in B$, the stack $\mathsf{Exp}_{X/B}$ has one
isomorphism class of objects for each integer $k \ge 0$, corresponding to the
expansion $X_0[k]$.  

\[
\mathsf{Exp}_{X/B,0}
=
\left\{[0],[1],[2],\dots\right \},
\]
where $[k]$ is the expansion with $k$ inserted $\mathbb{P}^1$-bundles.

We can describe the topology on the stack by describing specializations between points. The specialization order is given by increasing length: the closure of $[k]$
contains all $[m]$ with $m \ge k$.  In other words, longer expansions are more
degenerate.  Altogether, the special fiber of $\mathsf{Exp}_{X/B}$ is a countable chain of strata
\[
[0] \supset \overline{[1]} \supset \overline{[2]} \supset \cdots.
\]

The automorphism group of the length-$k$ expansion is $\mathrm{Aut}(X_0[k]) \cong \mathbb{G}_m^k$. This comes from the independent fiberwise scalings of each bundle $P_i$
which fix the two sections.  Ultimately, the point $[k]$ is a stacky
point $\mathcal B\mathbb{G}_m^k$.

Note that the tubes do not appear in Jun Li's theory. They can be imposed artificially though, e.g. by blowing up the universal family $\mathsf{Exp}_{X/B}$ and semistabilizing. This is explained in detail in~\cite[Section~6.7]{MR20}.
\end{example}

More general stacks of expansions are rarely so well-behaved or explicitly understood. One well-understood case comes from classical toric geometry: the combinatorics of the \underline{secondary fan}~\cite{GKZ} provides a natural model for the stack of expansions of surfaces~\cite{KH21,MR20}. For a parallel study in the setting of configuration spaces, where things are similarly explicit, see~\cite{SCM24}.

\subsubsection{The stack of expansions in general}
In the next example, we describe the stack of expansions in the general case. The basic input comes from toric/toroidal geometry. 

We take $B = \mathbb A^1$ and $\mathcal X = \mathcal A_B$, the degeneration with $r$ components described previously:
\[
\mathcal A_B = [\mathbb A^r/\mathbb G_m^r]\times_{[\mathbb A^1/\mathbb G_m]} \mathbb A^1.
\]
It suffices to construct the stack in this case. Any expansion of $\mathcal A_B$ is equivalently a torus-equivariant degeneration of $\mathbb A^r$ over $\mathbb A^1$. Standard toric geometry, as described in~\cite{KKMSD}, encodes such degenerations combinatorially: they correspond to rational polyhedral decompositions of the standard $r$-simplex $\Delta_r$. Specifically, given a decomposition $\mathcal P$ of $\Delta_r$, taking the cone over $\mathcal P$ yields a fan refining that of $\mathbb A^r$. Passing back to quotient stacks, we obtain a birational modification
\[
\mathcal A_{B}(\mathcal P)\to \mathcal A_B,
\]
and thus a new degeneration over $B$. The special fiber may not be reduced; it is reduced precisely when the vertices of $\mathcal P$ have integer coordinates. Since we start with the standard simplex $\Delta_r$ at height $1$ under $\mathbb R^r_{\geq 0}\to \RR_{\geq 0}$, this only occurs for the trivial subdivision. But if we start with a polyhedral complex with \underline{rational} coordinates subdividing the height $1$ slice, we can dilate the picture, for instance by the least common multiple of all denominators, corresponding to a ramified base change $\mathbb A^1\to \mathbb A^1$. 

The special fiber of the base change is an expansion. Since we only care about codimension-one strata, we focus on the $1$-dimensional skeleton of $\mathcal P$. Such an object—essentially an embedded graph in $\Delta_r$—is called a \underline{$1$-complex}.

Different $1$-complexes can give the same expansion -- it happens exactly when the ``combinatorial'' picture of the $1$-complexes are the same. Each vertex of $\mathcal P$ determines a toric variety, and these toric varieties can be glued to describe the special fiber. When this occurs, we say the $1$-complexes have the same \underline{combinatorial type}. The set of combinatorial types is discrete. In the case $r=2$, i.e., in the double point theory, it reduces to the length of the expansion.

\underline{The break with the double point theory.} We would like to construct the stack $\mathsf{Exp}_{\mathcal X/B}$ whose points are in bijection with expansions, determined by combinatorial types of polyhedral complexes. This is exactly where things begin to diverge from Jun Li's theory of expanded degenerations.  

There is a general procedure, known as the \underline{Artin fan construction}~\cite{AW,ACMW,CCUW}, which takes a polyhedral complex (or a stacky version) and produces a $0$-dimensional Artin stack. The idea to construct $\mathsf{Exp}_{\mathcal X/B}$ is then:
\begin{enumerate}[(i)]
\item Construct a moduli functor on the category of cone/polyhedral complexes, parameterizing families like $\mathcal P$. 
\item Show that this functor is representable by a polyhedral complex (or stacky version). 
\item Apply the Artin fan construction to convert it into an Artin stack over $B$. Functoriality ensures a universal family whose fibers are the required expansions. 
\end{enumerate}

This procedure recovers the Jun Li stack above, giving an efficient construction. In the general case, however, property (ii) fails: the functor is not representable\footnote{Such non-representable functors on the category of polyhedral/cone complexes are not especially exotic. Here is a simple example: consider the functor on the category of rational polyhedral cone complexes that sends $\Sigma$ to the set (in fact, group) of $\mathbb R$-valued piecewise linear functions on $\Sigma$. This clearly should be represented by the object $\mathbb R$ -- indeed this is essentially a tautology. But this is not a cone complex. Indeed, what cone complex structure should it have? E.g. if we give $\mathbb R$ the fan structure of $\mathbb P^1$, morphisms from $\Sigma$ would not capture all such functions, since it would require cones to be mapped to cones. Therefore, the functor wants to be represented by the real line with ``no'' cone complex structure, or perhaps a single non-convex cone given by $\mathbb R$ itself.} Attempting to represent it instead produces a ``polyhedral space''—a topological space underlying a polyhedral complex but without a \underline{distinguished choice} of polyhedral structure. 

The key object in~\cite{MR20} is a topological space $T$ with an infinite family of admissible polyhedral complex structures, together with a bijection
\[
\{\textnormal{Admissible polyhedral structures $\lambda$ on $T$}\} \longleftrightarrow \{\textnormal{Stacks of expansions $\mathsf{Exp}_{\mathcal X/B}^\lambda$}\}.
\] 
These structures form a category under refinement, and are closed under refinement. One can view this as analogous to the dictionary between toric compactifications of a torus and cone complex structures on its cocharacter lattice. 

Each choice of stack of expansions comes with a universal family
\[
\mathcal X^{\lambda,\mathsf{exp}}\to \mathsf{Exp}_{\mathcal X/B}^\lambda
\]
and, as we explained before, a distinguished set of components called \underline{tube components}. The choice of the stack of expansions and the designation of tube components are both prices for attempting to represent a non-representable functor. 

As we will see in the statement of the degeneration formula in Theorem~\ref{thm: deg-form}, the flexibility of building the entire system of these stacks has some utility, though it may seem like a nuisance at first. Even in the double point case, the picture of many stacks of expansions persists; however, the category of admissible polyhedral structures has a terminal object, which recovers the representing functor. Again, we refer to~\cite[Section~6.7]{MR20} for a discussion of exactly how things collapse in this case.

\subsection{The stack of expanded maps}

By choosing a stack of expansions and studying stable maps to the universal family, we obtain a moduli stack of stable maps to expansions. The role of the tube components is to ensure that the universal family varies flatly over the stack of expansions.

Fix a projective snc degeneration $\mathcal X \to B$ and a stack of expansions $\mathsf{Exp}_{\mathcal X/B}$.

\begin{definition}[Expanded stable maps]
The moduli problem of expanded stable maps $\Mbar^{\mathrm{exp}}(\mathcal X/B,\beta)$ has fiber over $S$ given by
\[
(\mathcal X^{\mathrm{exp}}_S \to S,\; C \to S,\; f : C \to \mathcal X^{\mathrm{exp}}_S)
\]
where:

\begin{enumerate}[(i)]
\item $\mathcal X^{\mathrm{exp}}_S \to S$ is an $S$-point of 
$\mathsf{Exp}_{\mathcal X/B}$, i.e.\ an expansion of $\mathcal X/B$ over $S$;

\item $C \to S$ is a family of nodal curves; and

\item $f : C \to \mathcal X^{\mathrm{exp}}_S$ is an $S$-morphism.
\end{enumerate}

These data are required to satisfy:

\begin{enumerate}[(i)]

\item \underline{Predeformability.} At every geometric point of $S$, the map is predeformable. 

\item \underline{Tube condition.} For every geometric point $s$, 
let $T \subset \mathcal X^{\mathrm{exp}}_s$ be a tube component with projection
\[
\pi_T : T \to D
\]
exhibiting it as a $\mathbb P^1$-bundle. If $C' \subset C_s$ is an irreducible component such that 
$f_s(C') \subset T$, then
\[
f_s|_{C'} : C' \longrightarrow T
\]
factors through a fiber of $\pi_T$ and is a finite cover of that 
$\mathbb{P}^1$-fiber, ramified only over the sections 
$0$ and $\infty$.

\noindent
(Tube contraction.) In particular, for every geometric point $s$ of $S$, 
by contracting the tube components and the components of their preimages, 
we obtain a factorization $\overline C_s \to \overline{\mathcal X}_s$. 

\item \underline{Stability.} For every geometric point $s$, 
the map obtained after tube contraction
\[
\overline f_s : \overline C_s \to \overline{\mathcal X}^{\mathrm{exp}}_s
\]
has finite automorphism group over $\mathcal X$. 
\end{enumerate}

The curve class $\beta$ is measured after pushforward to $\mathcal X$. 
\end{definition}

In Jun Li's double point theory, the stability condition requires that, in addition to the usual notion of stability for contracted components in Kontsevich's mapping spaces, each $\mathbb P^1$-bundle component contain at least one component of the curve that maps as something \underline{other} than a fully ramified cover of a fiber. In this version of the theory, we \underline{allow} such ``trivial bubble'' mappings, but only when they map into the tube components.

We fix a projective simple normal crossings degeneration $\mathcal X/B$ and a stack of expansions, and suppress this choice from here on out. We have the following theorem.

\begin{theorem}
The moduli stack $\Mbar_{g,n}^{\sf exp}(\mathcal X/B,\beta)$ of stable maps to expansions is a proper Deligne--Mumford stack over $B$. The morphism
\[
\Mbar_{g,n}^{\sf exp}(\mathcal X/B,\beta)\to \mathsf{Exp}_{\mathcal X/B}
\]
carries a relative perfect obstruction theory. For each $b\in B$, this yields a virtual class 
\[
[\Mbar_{g,n}^{\sf exp}(\mathcal X_b,\beta)]^{\sf vir}.
\]
The natural morphism
\[
\Mbar_{g,n}^{\sf exp}(\mathcal X/B,\beta)\to \Mbar_{g,n}^{\sf ch}(\mathcal X/B,\beta)
\]
is proper, and the virtual classes agree under pushforward.
\end{theorem}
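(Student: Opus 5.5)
I would split the statement into three parts and treat them in order: algebraicity and the Deligne--Mumford property; properness; and the obstruction theory together with the comparison of virtual classes.

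\textbf{Algebraicity and the Deligne--Mumford property.} Over the stack $\mathsf{Exp}_{\mathcal X/B}$ the universal family $\mathcal X^{\sf exp}\to\mathsf{Exp}_{\mathcal X/B}$ is flat and projective. Hence the relative stack of maps from prestable curves to its fibers is algebraic by standard Hom-stack results. Predeformability and the tube condition are locally closed conditions, since they are local and can be checked on the Artin fan $\mathcal A_{B}$. Stability is open. Finite automorphisms follow from the stability condition after tube contraction. The one point needing care is that the $\mathbb G_m$-scalings of non-tube $\mathbb P^1$-bundle components cannot fix a map with only trivial bubbles there: such a component must contain a non-trivial curve component, otherwise the expansion could be contracted to a coarser point of $\mathsf{Exp}$.

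\textbf{Properness.} I would check the valuative criterion: a map over the generic point of a DVR should have a limit, unique after base change. Existence reduces to the transversalization theorem proved above: after ramified base change and a strata blowup the limit is dimensionally transverse, and by the predeformability theorem it is predeformable. This expansion need not be one of those allowed by the chosen polyhedral structure $\lambda$. I would therefore refine further, using that admissible structures are closed under refinement, until the expansion is pulled back from $\mathsf{Exp}^\lambda$. I would then contract the superfluous $\mathbb P^1$-bundle components that are not tubes and destabilize the curve over the tubes. Uniqueness comes from the separatedness of $\mathsf{Exp}^\lambda$, which holds because it is the Artin fan of a polyhedral complex whose cones are determined by the tropical limit, together with separatedness of stable maps. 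Boundedness of the combinatorial types appearing for fixed $(g,n,\beta)$ follows from the tropical balancing condition. I expect this step to be the main obstacle: the tropical limit must land in a cone of $\lambda$, and that cone must be unique.

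\textbf{Obstruction theory and comparison.} Relative to $\mathsf{Exp}_{\mathcal X/B}$, the map $\mathcal X^{\sf exp}\to \mathcal A_{\mathsf{Exp}}$ is smooth with tangent bundle $T^{\sf log}$, so $R^\bullet\pi_\star f^\star T^{\sf log}_{\mathcal X^{\sf exp}/\mathsf{Exp}}$ is a perfect obstruction theory relative to the stack of predeformable maps to the Artin fan family. This yields virtual classes by Manolache pullback. For the comparison, consider the morphism $\Mbar^{\sf exp}\to\Mbar^{\sf ch}$, which composes with the expansion contraction and stabilizes. By the theorem that log maps factor through expansions, it is surjective and proper. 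Both obstruction theories are pulled back from the same $T^{\sf log}$ of $\mathcal X/B$. The resulting square over the Artin-fan level map spaces $\mathfrak M^{\sf exp}(\mathcal A/B)\to\mathfrak M^{\sf ch}_{g,n}(\mathcal A_B/B)$ is Cartesian. That lower map is proper and birational between pure-dimensional stacks, being a subdivision at the tropical level, so it has degree one. Costello's pushforward theorem, or virtual push-forward compatibility with Manolache pullback, then gives equality of virtual classes.
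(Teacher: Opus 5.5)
Your third step is the paper's own argument. The paper also exhibits $\Mbar^{\sf exp}_{g,n}(\mathcal X/B,\beta)\to\Mbar^{\sf ch}_{g,n}(\mathcal X/B,\beta)$ as the base change of the proper birational map $\mathfrak M^{\sf exp}_{g,n}(\mathcal A_B/B)\to\mathfrak M^{\sf ch}_{g,n}(\mathcal A_B/B)$ and concludes by virtual push-forward. Properness and the Deligne--Mumford property are only cited to \cite{R19,MR23}.

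The genuine gap is in your uniqueness argument. It cannot rest on separatedness of $\mathsf{Exp}^\lambda_{\mathcal X/B}$, because that stack is not separated over $B$. Its special points have stabilizers $\mathbb G_m^k$ (in Li's example the point $[k]$ is $\mathcal B\mathbb G_m^k$). Moreover, a map from a DVR whose generic point lies over $B^\circ$ typically has several non-isomorphic extensions: already for $\mathcal A_B$ with $r=2$, if $t$ pulls back to $\varpi^\ell$, every splitting $\ell=m_1+m_2$ gives one. So the target stack does not decide which expansion a limit lives on; the map has to. The missing idea is that the generic family determines a tropical limit $\Gamma\to\ell\cdot\Delta_r$ (for instance, the tropicalization of its unique logarithmic limit). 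Stability then forces the tube-contracted $1$-complex of any limit to be the image of $\Gamma$. Stability is what makes automorphism groups finite, and it is the actual reason a non-tube component cannot carry only trivial bubbles; ``contracting to a coarser point'' is not a reason. Knowing the image of $\Gamma$ pins down the point of $T$, hence the cone of $\lambda$ containing it in its relative interior, hence the map $\Delta'\to\mathsf{Exp}^\lambda$ up to $2$-isomorphism. Only after this does separatedness of ordinary stable maps to the resulting fixed family give uniqueness.

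The existence half needs the same input and is confused as written. $\lambda$ is fixed in the statement, and refining it produces a different moduli space, so closure of admissible structures under refinement plays no role. Instead, replace the expansion produced by the transversalization theorem by the one whose $1$-complex is the image of $\Gamma$. This means contracting every superfluous expanded component, not only $\mathbb P^1$-bundles, since snc expansions also contain higher-rank toric components, e.g.\ the one created by blowing up a triple point. Then observe that the universal expansion over the unique cone of $\lambda$ containing this $1$-complex in its relative interior differs from it only by tube components, and insert trivial bubbles there.

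Alternatively, your own Cartesian square can carry properness, as in the paper's framework. $\Mbar^{\sf ch}_{g,n}(\mathcal X/B,\beta)$ is closed in the proper stack $\Mbar_{g,n}(\mathcal X/B,\beta)$, so $\Mbar^{\sf exp}_{g,n}(\mathcal X/B,\beta)$ is proper over $B$ as soon as $\mathfrak M^{\sf exp}_{g,n}(\mathcal A_B/B)\to\mathfrak M^{\sf ch}_{g,n}(\mathcal A_B/B)$ is. This relocates the issue rather than removing it. Neither stack need be normal, and ``a subdivision at the tropical level'' really describes their normalizations $\mathfrak M^{\sf log-exp}_{g,n}(\mathcal A_B/B)\to\mathfrak M^{\sf log}_{g,n}(\mathcal A_B/B)$. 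Separatedness of the map itself again needs the tropical argument above, with the tropical data read off from the family of maps to $\mathcal A_B$.
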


\begin{remark}
The map above is {not} an isomorphism; it has positive-dimensional fibers. If we ignore the stability condition and work with the universal degeneration $\mathcal A_{B}\to B$, we can define an analogous stack $\mathfrak M^{\sf exp}_{g,n}(\mathcal A_{B}/B)$ of maps to expansions of $\mathcal A_{B}$, with stability imposed \underline{relative} to $\mathcal A_{B}/B$. This can be arranged so that the resulting stacks fit into a Cartesian diagram
\[
\begin{tikzcd}
\Mbar^{\sf exp}_{g,n}(\mathcal X/B,\beta)\arrow{r}\arrow{d} & \mathfrak M^{\sf exp}_{g,n}(\mathcal A_{B}/B) \arrow{d} \\
\Mbar_{g,n}^{\sf ch}(\mathcal X/B,\beta)\arrow{r} & \mathfrak M_{g,n}^{\sf ch}(\mathcal A_B/B).
\end{tikzcd}
\]
See~\cite[Section~5]{AW} for a very similar construction. The right vertical morphism is proper and birational, though some fibers may be positive dimensional. The stacks $\mathfrak M^{\sf exp}_{g,n}(\mathcal A_{B}/B)$ and $\mathfrak M_{g,n}^{\sf ch}(\mathcal A_B/B)$ need not be normal.
\end{remark}

\subsection{Logarithmic geometry}

We finally come to the most well-known approach to the GW theory of degenerations -- logarithmic Gromov--Witten theory. The preceding discussion is meant to give a genuinely geometric picture of what is going on, but it will turn out to be essentially equivalent to a very elegant categorical approach using logarithmic geometry. This theory was first proposed by Siebert, and established in the landmark papers~\cite{AC11,Che10,GS13}. 

We will start with a brief introduction to logarithmic structures, but refer the reader to~\cite{ACGHOSS,ACMUW} for introductions to the subject. 

Here is one useful perspective on logarithmic structures: a scheme
carries polynomial functions, while a logarithmic scheme remembers, in addition,
certain distinguished monomial functions. A key caveat is that the monomials should not be viewed as a subset but as a separate structure with a map to the ring of polynomials. Since monomials mix multiplicatively, the appropriate structure is that of a monoid or a semigroup. Morphisms should include the data of a pullback that takes monomials to monomials, and should commute with the map to the structure sheaf. All the geometric objects we are discussing -- nodal curves, snc degenerations, moduli spaces of curves -- come with natural divisors, e.g. the functions in the total space of a degeneration cutting out a component or the locus in the moduli space of curves comprising singular divisors. The monomials of interest will be the functions cutting out these divisors.

\subsubsection{Logarithmic schemes}

Let $X$ be a scheme. A logarithmic structure on a scheme $X$ consists of a sheaf
of commutative monoids $\mathcal M_X$ with a morphism of sheaves of
monoids
\[
\alpha \colon \mathcal M_X \longrightarrow \mathcal O_X,
\]
where $\mathcal O_X$ is viewed as a monoid under multiplication; it should saisfy the property that the induced map
\[
\alpha^{-1}(\mathcal O_X^\times) \xrightarrow{\;\sim\;} \mathcal O_X^\times
\]
is an isomorphism. A logarithmic scheme is a pair $(X,\mathcal M_X)$ consisting of a
scheme and a log structure.

The examples essentially fall into two categories. The first is the ``geometric'' examples, where the sheaf $\mathcal M_X$ is a subsheaf of the structure sheaf, so monomials are ``polynomials that satisfy a property''. The second are the examples with ``generic logarithmic structure'', where the sheaf $\mathcal M_X$ contains elements that go to $0$ under the map to $\mathcal O_X$. 

\begin{example}[Geometric examples]
Let $X$ be a variety and $D\subset X$ a divisor. The {\it divisorial logarithmic structure} is the sheaf given by
\[
\mathcal M_X(U):= \left\{f\in\mathcal O_X(U)\colon f|_{U\setminus D}\in\mathcal O^\star_X(U\setminus D)\right\}.
\]
In other words, it is those functions on $U$ that become invertible away from $D$. In this example, the sheaf $\mathcal M_X$ is a subsheaf of $\mathcal O_X$. 

The key example among these examples is affine space with the coordinate hyperplanes:
\[
X = \mathbb A^n, \ \ \ D = \bigcup_{i=1}^n \left\{x_i = 0\right\}.
\]
In this case, the monoid $\mathcal M_X(U)$ is precisely the set of monomials
\[
\mathcal M_X(U) = \left\{\alpha\cdot \prod_i x_i^{a_i}\colon \alpha\in\mathcal O^\star_X(U), \ \ a_i\in\NN_0\right\}. 
\]
In particular, $\mathcal M_X(X)$ is exactly the set of monomials in the polynomial ring. We use $\mathbb N_0$ for the natural numbers including $0$. 

One can take $X$ to be any toric variety and $D$ the toric boundary divisor. On a torus invariant affine $U$, the sheaf $\mathcal M_X(U)$ are exactly the characters on the torus that are regular on $U$, up to units. 
\end{example}

We record a non-geometric example. 

\begin{example}[Generically nontrivial logarithmic structure]
Let $D\subset X$ be a smooth divisor in $X$. We can equip $X$ with the divisorial logarithmic structure, and call it $M_X$. We can then look at the morphism
\[
\iota\colon D\hookrightarrow X
\]
and consider the pullback $\iota^\star M_X$. This gives an interesting non-geometric logarithmic structure $M_D$ on $D$. On an open set $U$, the sections of $M_D(U)$ can be identified with $\mathbb N_0\oplus\mathcal O_D^\star(U)$. An element $(n,u)$ maps to $0$ if $n>0$ and to $u$ otherwise. Indeed, on the ambient $X$, the nontrivial elements in the logarithmic structure are powers of the local defining equation for $D$. When restricted to $D$ this function -- by definition -- vanishes. The logarithmic structure ``remembers'' some features of the way in which $D$ is embedded inside $X$. In fact, the structure $M_D$ is equivalent to the data of the normal bundle of $D$ in $X$. 

In particular, we flag that the logarithmic structure in this case is generically nontrivial, meaning it is not isomorphic to $\mathcal O_X^\star$, even generically. This makes it very different from the previous example. 
\end{example}

As we will see shortly, this second kind of example arises naturally by restricting the divisorial logarithmic structure of $(X,D)$ to $D$. This kind of restriction/pullback operation makes the logarithmic category more flexible than, say, toroidal embeddings~\cite{KKMSD}. 

\subsubsection{Logarithmic morphisms}

We turn to morphisms, which of course play a key role in our story. A morphism of log schemes
\[
f \colon (X,\mathcal M_X) \longrightarrow (Y,\mathcal M_Y)
\]
is a morphism of schemes $f \colon X \to Y$ together with a morphism of sheaves of monoids
\[
f^\flat \colon f^{-1}\mathcal M_Y \longrightarrow \mathcal M_X
\]
such that
\[
\begin{tikzcd}
f^{-1}\mathcal M_Y \arrow{r}{f^\flat} \arrow[swap]{d}{f^{-1}\alpha_Y} &
\mathcal M_X \arrow{d}{\alpha_X} \\
f^{-1}\mathcal O_Y \arrow{r} & \mathcal O_X
\end{tikzcd}
\]
commutes, where the bottom arrow is the morphism induced by $f$.

In other words, a morphism of log schemes is a morphism of the underlying
schemes together with a compatible map between their distinguished
monomials.

\begin{example}
Suppose $(X,\mathcal M_X)$ and $(Y,\mathcal M_Y)$ are logarithmic schemes with divisorial logarithmic structures determined by $D$ and $E$. Since the logarithmic sheaves $\mathcal M_X$ or $\mathcal M_Y$ are subsheaves of the structure sheaf, given a scheme theoretic map
\[
f\colon X\to Y
\]
there is at most one logarithmic morphism, so it makes sense to ask \underline{is the map $f$ logarithmic?} This is the case if and only if $f^{-1}(E)\subset D$. 
\end{example}

\subsubsection{The categorical aspects}

Logarithmic structures form a category, but in fact, there are many flavours of these categories. 

Let $(X,\mathcal M_X)$ be a logarithmic scheme. The \underline{characteristic sheaf} is the quotient
\[
\overline{\mathcal M}_X := \mathcal M_X / \mathcal O_X^\times.
\]
The logarithmic structure sheaf $\mathcal M_X$ captures monomials, while the characteristic sheaf captures ``monomials up to scalars''. 

Let $f \colon X \to Y$ be a morphism of schemes, and let
$(Y,\mathcal M_Y)$ be a log scheme. The \underline{pullback log structure}
$f^\star\mathcal M_Y$ on $X$ is defined as the sheafification of the
pushout
\[
f^{-1}\mathcal M_Y \;\oplus_{\,f^{-1}\mathcal O_Y^\times}\; \mathcal O_X^\times,
\]
where $f^{-1}\mathcal O_Y^\times \to \mathcal O_X^\times$ is induced by $f$. Essentially, we are declaring that the pullback of a monomial is a monomial, and then enforcing compatibility with the units. This is the smallest logarithmic structure that makes the arrow logarithmic. A morphism $X\to Y$ of logarithmic schemes is \underline{strict} if the map from the logarithmic structure on $X$ to the pullback of the structure from $Y$ is an isomorphism. 

A standard categorical restriction is \underline{coherence}. 
A logarithmic scheme $(X,\mathcal M_X)$ is \emph{coherent} if for every point $x\in X$, the characteristic monoid $\overline{\mathcal M}_{X,x}$ is finitely generated and there exists an \'etale neighborhood $U \to X$ of $x$ together with a strict morphism
\[
U \longrightarrow \Spec \mathbb{C}[\overline{\mathcal M}_{X,x}],
\]
endowed with its canonical logarithmic structure, inducing an isomorphism on characteristic monoids at $x$.

The log structure $\mathcal M_X$ is
called \underline{fine} if it is coherent and, for every geometric point $\bar x \to X$, the stalk
\[
\overline{\mathcal M}_{X,\bar x}
\]
is a finitely generated integral monoid; integral means that it injects into its groupification.  It is called \underline{fine and saturated} (fs) if it is fine and, in addition,
each stalk is saturated in its groupification, i.e.\ for any
$m \in \overline{\mathcal M}_{X,\bar x}^{\mathrm{gp}}$ with $n m \in
\overline{\mathcal M}_{X,\bar x}$ for some positive integer $n$, we have
$m \in \overline{\mathcal M}_{X,\bar x}$.

These categories also admit fiber products, although we should warn the reader that the fiber product of fine logarithmic schemes may not have the same underlying space as the scheme theoretic fiber product.

All these definitions are straightforward to extend to stacks, working in the \'etale, lisse-\'etale, or flat topologies. 

\subsubsection{Logarithmic smoothness}

\underline{Logarithmic smoothness} is a property that can be held by a morphism $(X,\mathcal M_X)\to (Y,\mathcal M_Y)$. It can be defined using a version of an infinitesimal lifting criterion. We refer the reader to~\cite{ACGHOSS} for further discussion, but here we include a few facts without proof to help the reader. The reader might be reassured to know that the condition (iii) below characterizes the notion. 

\begin{enumerate}[(i)]
\item Let $(X,\mathcal M_X)$ be a divisorial log scheme with divisor $D$.  
If $X$ is a toric variety and $D$ is the full toric boundary, then the map
\[
(X,\mathcal M_X) \longrightarrow \operatorname{Spec} k
\]
is logarithmically smooth. If $X$ is smooth and $D$ is an snc divisor,
the map to $\operatorname{Spec} k$ is also log smooth. More generally, any
toroidal embedding with its toroidal divisor is logarithmically smooth.

\item A dominant equivariant morphism of toric varieties is logarithmically
smooth with respect to the toric log structures. In particular, the blowup
of a stratum of an snc pair, or more generally the blowup of any ideal
generated by monomials in the divisor of an snc pair $(X,D)$, defines a
logarithmically smooth map. Note that logarithmically smooth maps can fail
to be flat and may have non-reduced fibers.

\item \underline{Local structure.} A morphism $(X,\mathcal M_X) \to (Y,\mathcal M_Y)$
is log smooth if, locally on the source and target, it is the pullback of a
dominant equivariant morphism of toric varieties, compatibly with the
toric log structures.

\item Endow the moduli stack of stable curves $\Mbar_g$
with its divisorial log structure coming from the divisor of singular curves,
and endow the universal curve $\mathcal C = \Mbar_{g,1}$
with the induced log structure. Then the morphism
\[
\mathcal C \longrightarrow \Mbar_g
\]
is logarithmically smooth. An analogous statement holds for the stack of
nodal curves. In particular, any family of nodal curves over a base can be
equipped with a logarithmic structure making it logarithmically smooth.

\item \underline{Logarithmic tangent spaces.} If $f \colon (X,\mathcal M_X) \to (Y,\mathcal M_Y)$ is logarithmically
smooth, then it has a relative logarithmic tangent sheaf which is a vector bundle. 
\end{enumerate}

In particular, an snc degeneration
\[
\mathcal X\to B
\]
of the kind we have been studying is logarithmically smooth! Since nodal curves are also logarithmic, this suggests that within the logarithmic category, stable maps from curves to degenerations should behave just like stable maps to smooth targets. 

\subsection{The stack of logarithmic maps} 

We summarize the results of Abramovich--Chen~\cite{AC11,Che10} and Gross--Siebert~\cite{GS13}. We work in the category $\mathsf{LogSch}$ of fine and saturated logarithmic schemes. We suppress the logarithmic structure sheaf $\mathcal M_X$ from logarithmic schemes in what follows, and just spell out when the logarithmic structure is being ignored. 

\begin{definition}
A logarithmically smooth curve is a logarithmically smooth morphism $\pi\colon C\to S$ whose underlying map is flat with reduced and connected fibers of pure dimension $1$. 
\end{definition}

F. Kato gives a complete description of these objects. The underlying map 
$\pi: C \to S$ is a flat family of nodal curves with a collection of sections 
$\{p_i\}$. The logarithmic structure $\mathcal{M}_C$ on $C$ behaves in three ways relative to the base $S$:  

\begin{enumerate}
    \item \underline{Generic points.} At a generic point $\eta \in C$, 
    \[
        \mathcal{M}_{C,\eta} \cong \pi^\star \mathcal{M}_{S,\pi(\eta)}.
    \]
    \item \underline{Nodes.} At a node $q \in C$, the characteristic monoid 
    $\overline{\mathcal{M}}_{C,q} = \mathcal{M}_{C,q}/\mathcal{O}_{C,q}^\times$ 
    is generated by elements $x$ and $y$ corresponding to the two branches, 
    with the relation
    \[
        x y = \pi^\star(t)
    \]
    for some $t \in \mathcal{M}_{S,\pi(q)}$.
    \item \underline{Marked points.} At a marked section $p_i$, the logarithmic 
    structure is generated by a single monomial defining the divisor 
    $p_i(S) \subset C$.
\end{enumerate}

Thus, logarithmic curves are exactly flat families of nodal, marked curves 
equipped with a logarithmic structure encoding nodes and markings as above.

\begin{definition}
Let $\mathcal X\to B$ be a simple normal crossings degeneration, and equip $\mathcal X$ and $B$ with divisorial logarithmic structures coming from $\mathcal X_0$ and $0$ respectively. A logarithmic stable map is a diagram of logarithmic schemes
\[
\begin{tikzcd}
C\arrow{d}{\pi}\arrow{r}{f}& \mathcal X \ar[d]\\
S\arrow{r} & B
\end{tikzcd}
\]
where $\pi$ is a logarithmically smooth curve.
\end{definition}

Fix a curve class $\beta$, the  \underline{moduli stack of logarithmic stable maps} 
\[
\Mbar^{\sf log}_{g,n}(\mathcal{X}/B,\beta)
\] 
is the moduli problem whose fiber over a logarithmic scheme $S$ is the groupoid of diagrams
\[
\begin{tikzcd}
C \arrow{r}{f} \arrow{d}{\pi} & \mathcal{X} \arrow{d}\\
S \arrow{r} & B
\end{tikzcd}
\]
where
\begin{enumerate}[(i)]
    \item $\pi: C \to S$ is a logarithmically smooth curve of genus $g$ with $n$ marked sections,
    \item $f: C \to \mathcal{X}$ is a logarithmic morphism over $B$,
    \item and the underlying map $({C} \to {S}, {f})$ is stable in the usual sense and has class $\beta$.
\end{enumerate}

The key representability theorem of Abramovich--Chen and Gross--Siebert is the following. 

\begin{theorem}
The moduli problem $\Mbar^{\sf log}_{g,n}(\mathcal{X}/B,\beta)$ is representable by a Deligne--Mumford stack equipped with logarithmic structure. 
\end{theorem}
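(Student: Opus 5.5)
The plan follows Gross--Siebert and Abramovich--Chen: reduce the representability question to one about logarithmic structures alone. There is a forgetful morphism
\[
\Mbar^{\sf log}_{g,n}(\mathcal X/B,\beta)\longrightarrow \Mbar_{g,n}(\mathcal X/B,\beta)
\]
that remembers only the underlying stable map. The target is already a proper Deligne--Mumford stack by the black box (i). So it suffices to show that, over a fixed underlying stable map $(C\to S, f)$ with $S$ a scheme, the functor of logarithmic enhancements is representable by a DM stack, locally of finite type, over $S$. An enhancement consists of a log structure on $S$, a log smooth curve structure on $C$ of the form classified by Kato, and a log lift of $f$ over $B$.

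First, fix the curve. The stack of log smooth curves is representable: by Kato's description it is the stack of prestable curves $\mathfrak M_{g,n}$ with its canonical divisorial log structure, together with the fact that any log curve over $S$ is pulled back from this canonical one along a morphism of log structures $\mathcal M^{\rm can}_S\to\mathcal M_S$. The remaining data is therefore a log structure $\mathcal M_S$ receiving $\mathcal M^{\rm can}_S$ and the pullback of $\mathcal M_B$, plus a lift $f^\flat\colon f^{-1}\mathcal M_{\mathcal X}\to\mathcal M_C$. This is a problem of the form ``log structures with extra data'', which is the setting of Olsson's stack $\mathcal{L}og$ of fine log structures: it is an algebraic stack, and morphisms from log schemes to it classify log structures. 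Thus $\Mbar^{\sf log}$ is a substack of a Hom-type stack over $\Mbar_{g,n}(\mathcal X/B,\beta)\times_{\mathfrak M_{g,n}}\mathcal{L}og_{\mathfrak M_{g,n}}$.

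Next, analyse the lift at geometric points. At a generic point of a component $v$, an element $D_i$ of $\overline{\mathcal M}_{\mathcal X}$ (a component of $\mathcal X_0$ containing the image) maps to some section $\varphi_v(D_i)\in\overline{\mathcal M}_{S}$. At a node $q$ joining $v,w$ with node parameter $\ell_q$, the Kato local form $xy=\ell_q$ forces the relation $\varphi_w-\varphi_v=u_q\,\ell_q$, where $u_q$ is the contact order. At a marking $p_i$, one records only the contact order. Together with the compatibility that $\varphi_v$ maps the pullback of $t\in\mathcal M_B$ to the base element, these relations define the \emph{basic} (minimal) monoid. Take the monoid generated by the $\ell_q$ and $\varphi_v(D_i)$, impose the relations, and saturate; equivalently, take the dual of the tropical moduli cone of the combinatorial type. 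The key claim, which I expect to be the main obstacle, is that every log stable map over $S$ is pulled back uniquely from a basic one via a unique morphism $\mathcal M^{\rm basic}_S\to\mathcal M_S$, and that basicness is an open condition. Proving uniqueness requires the fs/saturation hypotheses and a careful comparison of the torsor data: the units in $f^\flat$ beyond the characteristic level must be matched with the line bundles of Olsson's stack. I would try first to prove it étale-locally, reducing to the toric local models of the paper's earlier proof of predeformability.

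Granting this, $\Mbar^{\sf log}$ is identified with the open substack of basic objects, which is algebraic. Automorphisms are controlled by those of the underlying stable map, together with the finitely many automorphisms of the basic monoid data compatible with it. Hence the automorphism groups remain finite and unramified, and the stack is DM. Finiteness of combinatorial types yields quasi-compactness, which is then used later for properness.
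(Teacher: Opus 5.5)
Your outline is the Abramovich--Chen/Gross--Siebert argument that the paper cites rather than reproduces: the non-formal point the paper flags (that a moduli problem on log schemes comes from a stack over schemes carrying its own log structure) is exactly your key claim that every log stable map is the unique pullback of a basic one, with basicness open, and your basic monoid (the dual of the tropical moduli cone) is theirs. Two points to tighten. First, the stack of \emph{all} enhancements obtained from Olsson's $\mathcal{L}og$ is only Artin, because log structures carry torus automorphisms $\operatorname{Hom}(\overline{\mathcal M}{}^{\rm gp}_S,\mathbb G_m)$; finiteness of stabilizers therefore comes precisely from basicness, since automorphisms over the underlying map must fix the lifts of $\ell_q$ and $\varphi_v$, and these generate $Q^{\rm gp}$ up to finite index. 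Second, the pullback property is proved on characteristic monoids, where $Q$ is universal for $(\varphi_v,\ell_q)$ subject to the node relations, and then lifted to log structures (e.g.\ via $Q\times_{\overline{\mathcal M}_S}\mathcal M_S$), rather than through the toric smoothing models of the predeformability argument.
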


We do not say much about the proof here, but we do note that it is far from formal. While the moduli problem we have defined is naturally defined as a stack over logarithmic schemes, it is not at all obvious that this comes from a stack over the category of schemes, promoted to logarithmic schemes by the choice of a logarithmic structure on that stack. To prove it, the authors develop techniques for characterizing when this kind of representability holds. In fact, the analogous moduli problem for the logarithmic Hilbert scheme \underline{fails} this representability property~\cite{MR20}. 

The moduli stack has a virtual structure. We rephrase this using the results in Abramovich--Wise~\cite{AW}. Let $\mathcal A_B\to B$ be the universal degeneration. The analogous moduli problem for maps from logarithmic curves to $\mathcal A_B$, ignoring the stability condition, is representable by an Artin stack equipped with logarithmic structure, denoted $\mathfrak M_{g,n}^{\sf log}(\mathcal A_B/B)$. 

\begin{theorem}
The stack $\mathfrak M_{g,n}^{\sf log}(\mathcal A_B/B)\to B$ is flat with irreducible, normal total space. The morphism
\[
\Mbar^{\sf log}_{g,n}(\mathcal{X}/B,\beta)\to \mathfrak M_{g,n}^{\sf log}(\mathcal A_B/B)
\]
is equipped with a relative perfect obstruction theory over $B$. Since the fibers of $\mathfrak M_{g,n}^{\sf log}(\mathcal A_B/B)\to B$ carry fundamental classes, we obtain, for each $b\in B$, a virtual class $[\Mbar^{\sf log}_{g,n}(\mathcal{X}_b,\beta)]^{\sf vir}$. 
\end{theorem}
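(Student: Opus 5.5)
Three things need proof: flatness and normality of $\mathfrak M_{g,n}^{\sf log}(\mathcal A_B/B)$ over $B$, a relative perfect obstruction theory for the forgetful morphism, and the construction of the virtual classes by virtual pullback. I would treat the geometry of $\mathfrak M^{\sf log}_{g,n}(\mathcal A_B/B)$ first, since the rest follows formally from it.

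\emph{Step 1: local structure of $\mathfrak M^{\sf log}_{g,n}(\mathcal A_B/B)$.} Let $\mathfrak M_{g,n}$ be the stack of prestable log curves over $B$, carrying its divisorial log structure from the boundary. A log map $C\to\mathcal A_B$ over $B$ is, on characteristic monoids, a piecewise linear function on the tropical curve. This function takes values in the cone over the simplex $\Delta_r$ and lies over the tropical base point. The key claim is that $\mathfrak M^{\sf log}_{g,n}(\mathcal A_B/B)\to \mathfrak M_{g,n}\times B$ is log \'etale. The reason is that $\mathcal A_B$ is an Artin fan, so maps to it carry no scheme-theoretic data beyond their tropical shadow, and the obstruction theory of $\mathcal A_B\to B$ is trivial because the relative dimension is $0$ and $\mathcal A_B\to B$ is log \'etale. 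A log \'etale map over the log smooth stack $\mathfrak M_{g,n}\times B$ is log smooth over $B$. By fact (iii) about log smoothness, $\mathfrak M^{\sf log}_{g,n}(\mathcal A_B/B)$ is therefore locally the pullback of a dominant toric morphism. This gives normality, since toric varieties are normal and we work with fs log structures, so saturation enters. I would obtain flatness over the one-dimensional base $B$ from torsion-freeness: no component of the total space is supported over $0$, because every toric stratum dominates the torus of $B$. That leaves irreducibility. I would deduce it from the fact that the locus over $B^\circ$ is dense, since every cone of the toric chart meets the interior, together with irreducibility of $\mathfrak M_{g,n}$. The main obstacle is here: making the log \'etaleness rigorous, i.e.\ showing that the tropical description identifies the stack with a union of toric charts glued along faces. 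I would follow the Abramovich--Wise strategy of describing the stack via its cone stack of tropical maps.

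\emph{Step 2: relative obstruction theory.} Since $\mathcal X\to\mathcal A_B$ is strict and smooth, with relative tangent bundle $T^{\sf log}_{\mathcal X/B}$, a lift of a log map $C\to\mathcal A_B$ to $\mathcal X$ is the same as an ordinary lift of the underlying morphism along a smooth representable map. The standard Behrend--Fantechi argument for stable maps therefore applies relative to $\mathfrak M^{\sf log}_{g,n}(\mathcal A_B/B)$. It gives the complex $(R\pi_\star f^\star T^{\sf log}_{\mathcal X/B})^\vee$, which is perfect in amplitude $[-1,0]$ because $\pi$ has relative dimension one. Properness and representability of the source are the earlier representability theorem.

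\emph{Step 3: virtual classes.} By Step 1 each fiber $\mathfrak M^{\sf log}_{g,n}(\mathcal A_{B}/B)_b$ is a Cartier divisor in an irreducible flat stack, hence has a fundamental class. Manolache's virtual pullback along the obstruction theory of Step 2, applied to these fiber classes, produces $[\Mbar^{\sf log}_{g,n}(\mathcal X_b,\beta)]^{\sf vir}$. Compatibility of virtual pullback with Gysin pullback from $B$ then shows that these classes specialize as $b$ varies.
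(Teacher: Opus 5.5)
Your proposal is correct and takes essentially the route the paper relies on: the paper cites Abramovich--Wise rather than giving a proof, and sketches the same obstruction theory in its discussion of cheap maps. Log \'etaleness of $\mathfrak M^{\sf log}_{g,n}(\mathcal A_B/B)$ over the stack of log curves over $B$ gives log smoothness over $B$, hence normality, flatness, and irreducibility via density of the generic locus $\mathfrak M_{g,n}\times B^\circ$; the strict smooth map $\mathcal X\to\mathcal A_B$ with relative tangent bundle $T^{\sf log}_{\mathcal X/B}$ gives the obstruction theory, and Manolache's virtual pullback gives the classes. Two small corrections. First, the log \'etaleness needs no chart-by-chart cone-stack description: the infinitesimal lifting criterion applies directly, since lifts of $C\to\mathcal A_B$ along strict square-zero thickenings of the base exist and are unique \'etale-locally on the curve (because $\mathcal A_B\to B$ is log \'etale), and so glue. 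Second, flatness should be justified by each local toric chart being irreducible with $t$ pulling back to a nonzero monomial, not by every toric stratum dominating $B$, since closed strata typically lie over $0$.
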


\subsection{Comparing the approaches}

We now have three moduli spaces of logarithmic maps -- the cheap version $\Mbar^{\sf ch}_{g,n}(\mathcal{X}/B,\beta)$, the expanded version $\Mbar^{\sf exp}_{g,n}(\mathcal{X}/B,\beta)$, and the logarithmic version $\Mbar^{\sf log}_{g,n}(\mathcal{X}/B,\beta)$. The map
\[
\mathfrak M_{g,n}^{\sf log}(\mathcal A_B/B)\to \mathfrak M_{g,n}^{\sf ch}(\mathcal A_B/B)
\]
is known to be quasi-finite and proper by~\cite{ACMW} and the domain is normal as we said above. It is therefore automatically the normalization. The geometric spaces
\[
\Mbar^{\sf log}_{g,n}(\mathcal{X}/B,\beta)\to\Mbar^{\sf ch}_{g,n}(\mathcal{X}/B,\beta)
\]
are pulled back from the preceding maps under $\Mbar^{\sf ch}_{g,n}(\mathcal{X}/B,\beta)\to \mathfrak M_{g,n}^{\sf ch}(\mathcal A_B/B)$. Note that the pullback of the normalization is typically not the normalization. 

The map 
\[
\Mbar^{\sf exp}_{g,n}(\mathcal{X}/B,\beta)\to\Mbar^{\sf ch}_{g,n}(\mathcal{X}/B,\beta)
\]
as we noted is not finite; it can have positive dimensional fibers and is not normal. This is again the pullback of an appropriate map on stacks of maps to $\mathcal A_B/B$. Also as we noted earlier, the stack $\mathfrak M_{g,n}^{\sf exp}(\mathcal A_B/B)$, even in the Jun Li setting, is not normal. It does have a normalization with a modular interpretation via logarithmic structures~\cite{R19}. This leads to a fourth space $\Mbar^{\sf log-exp}_{g,n}(\mathcal{X}/B,\beta)$. Although we don't say too much more about it, it can be constructed by studying logarithmic maps to the universal expansion whose underlying map is predeformable. Summarizing:

\[
\begin{tikzcd}[row sep=1.5em, column sep=1.5em, every label/.append style={font=\small}, scale=0.85]
& \Mbar^{\sf log-exp}_{g,n}(\mathcal{X}/B,\beta)
  \arrow[dl] \arrow[dr] & \\
\Mbar^{\sf log}_{g,n}(\mathcal{X}/B,\beta)
  \arrow[dr] &&
\Mbar^{\sf exp}_{g,n}(\mathcal{X}/B,\beta)
  \arrow[dl] \\
& \Mbar^{\sf ch}_{g,n}(\mathcal{X}/B,\beta) &
\end{tikzcd}
\qquad
\begin{tikzcd}[row sep=1.5em, column sep=1.5em, every label/.append style={font=\small}, scale=0.85]
& \mathfrak M_{g,n}^{\sf log-exp}(\mathcal A_B/B)
  \arrow[dl] \arrow[dr] & \\
\mathfrak M_{g,n}^{\sf log}(\mathcal A_B/B)
  \arrow[dr] &&
\mathfrak M_{g,n}^{\sf exp}(\mathcal A_B/B)
  \arrow[dl] \\
& \mathfrak M_{g,n}^{\sf ch}(\mathcal A_B/B) &
\end{tikzcd}
\]
In the right diamond, all four spaces are birational. The two spaces with log as part of the superscript are normal, and the lower left arrow is normalization. The top right arrow is also normalization. The diamond on the left is obtained by pulling back along $\Mbar^{\sf ch}_{g,n}(\mathcal X/B,\beta)\to \mathfrak M_{g,n}^{\sf ch}(\mathcal A_B/B)$. 

All the spaces except the cheap one have natural moduli theoretic interpretations. In fact, the cheap space also has a moduli interpretation -- Wise has studied logarithmic mapping spaces in a very general context~\cite{Wis16b,Wis16a}, and it follows from these papers that our cheap logarithmic space coincides with the \underline{fine} (as opposed to fine and saturated) logarithmic mapping space. 

\begin{remark}[Abramovich--Fantechi]
In fact, there is a fifth space, although it is isomorphic to the fourth space, following Abramovich--Fantechi~\cite{AF11}. Specifically, one can add orbifold structure to the nodes and markings of the curve, and the singularities of the target. The details of this have not been checked, but from the smooth pair case, one would expect that it coincides with the logarithmic-expanded space~\cite{R19}. 
\end{remark}

\subsection{Tropical stratifications}

Under mild hypotheses, any logarithmic scheme or stack carries a natural stratification—morally, the loci where the logarithmic structure (more precisely, its characteristic sheaf) jumps. Under additional assumptions, logarithmic morphisms induce maps of strata. We describe this stratification for the space of logarithmic maps to a degeneration. 

As in the study of the moduli space of curves or stable maps, a practical understanding of logarithmic stable maps relies on controlling this stratification. In our setting, it is encoded by \underline{combinatorial types of tropical maps}. Analogous stratifications exist for the four variants described above; we focus on 
\[
\mathfrak M_{g,n}^{\sf log}(\mathcal A_B/B),
\]
and then explain the necessary modifications in the other cases.

\subsubsection{Stratification in theory}

Before giving the formal definition, note that the log structure $\mathcal A_B/B$ induces a map of fans
\[
\RR^r_{\geq 0} \longrightarrow \RR_{\geq 0},
\]
given by summing coordinates. This is the toric picture for the standard smoothing of the $r$-fold intersection point $ \mathbb V(\prod_{i=1}^r x_i-t)\to \A^1_t$. 

\begin{definition}[Tropical curve]
A \underline{tropical curve} is the dual graph of a prestable curve, together with a choice of positive real edge lengths for each finite edge. Vertices may carry genus labels and marked points are treated as half-edges, as usual. 
\end{definition}

\begin{definition}[Tropical map]
A \underline{tropical map} to $\RR^r_{\geq 0}$ consists of a tropical curve together with a continuous piecewise-linear map of polyhedral complexes to $\RR^r_{\geq 0}$ that is \underline{vertical}, meaning that it is contracted under
\[
\RR^r_{\geq 0} \longrightarrow \RR_{\geq 0}.
\]
The map is constant on all half-edges corresponding to marked points\footnote{Note this final condition will change in the pairs case that we discuss later -- half-edges with non-constant slope are hiding as halves of edges with non-constant slope.}. 
\end{definition}

Equivalently, one can view such a map as a piecewise-linear map to a chosen dilation $\ell\cdot\Delta_r$ of the $r$-simplex, see Figure~\ref{fig: tropical-map}. The number $\ell$ is the image in $\RR_{\geq 0}$ to which the curve is contracted under the projection.
\begin{figure}[h!]
\begin{center}
\includegraphics[scale=0.5]{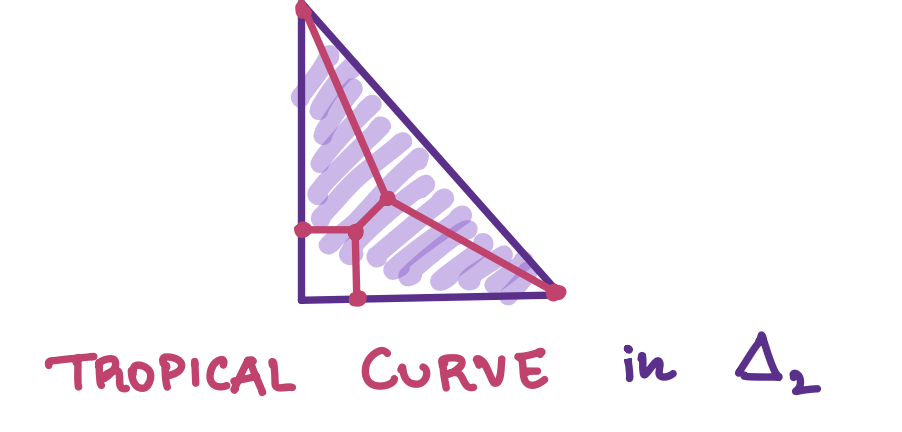}
\end{center}
\caption{A tropical map to a dilated simplex.}\label{fig: tropical-map}
\end{figure}

The set of all such tropical maps can be parameterized by a \underline{cone stack}—that is, a stack over the category of cone complexes. This theory is carefully developed in~\cite{CCUW}. Roughly speaking, it is obtained by gluing cones together, although the gluing diagram is slightly more general than what is allowed for ordinary cone complexes, permitting certain self-gluings and automorphisms.

Once this foundational work is in place, the parameterizing object for tropical maps, which we denote
\[
\mathsf T_{g,n}(\RR^r_{\geq 0}/\RR_{\geq 0}),
\]
can be constructed in a fairly naive manner. We give a brief and informal summary. A tropical map has \underline{discrete parameters} and \underline{continuous parameters}. The discrete parameters consist of the underlying dual graph, the cones of $\mathbb R_{\geq 0}^r$ to which each vertex maps, and the directions of the edges. The continuous parameters are the vertex positions with the cones in which they reside and the edge lengths.

Fixing the discrete parameters, piecewise linearity and continuity impose nontrivial conditions on the vertex and edge parameters. These conditions cut out a cell inside a cone of allowable choices. The discrete parameters carry a natural specialization order (for instance, edges of graphs may be contracted, and cones may be replaced by their faces). These specializations induce attaching maps between cells, and with some care one obtains the cone stack $\mathsf T_{g,n}(\RR^r_{\geq 0}/\RR_{\geq 0})$. We again refer to the beautiful article~\cite{CCUW} for a more complete treatment of the foundations and~\cite{R19} specifically for tropical maps.

Via the Artin fan construction, one obtains a $0$-dimensional Artin stack
\[
a^\star\mathsf T_{g,n}(\RR^r_{\geq 0}/\RR_{\geq 0})
\]
equipped with a map to $[\A^1/\mathbb G_m]$. A key fact in the subject is that there are natural maps
\[
\Mbar^{\sf log}_{g,n}(\mathcal{X}/B,\beta)
\longrightarrow
\mathfrak M_{g,n}^{\sf log}(\mathcal A_B/B)
\longrightarrow
a^\star\mathsf T_{g,n}(\RR^r_{\geq 0}/\RR_{\geq 0}).
\]
The last space is discrete—it consists of one point for each combinatorial type—and hence is naturally stratified. Indeed, its underlying topological space is essentially the geometric realization of the poset (more precisely, the category of faces) of $\mathsf T_{g,n}(\RR^r_{\geq 0}/\RR_{\geq 0})$. Pulling back along this map yields a stratification of the space $\Mbar^{\sf log}_{g,n}(\mathcal{X}/B,\beta)$ of logarithmic stable maps. Note that according to this indexing many (in particular, all but finitely many) strata will be empty. There is not claim of surjectivity onto the final space in the maps above. 

\medskip

\noindent
\underline{What does this map encode?}
The morphism
\[
\Mbar^{\sf log}_{g,n}(\mathcal{X}/B,\beta)
\longrightarrow
a^\star\mathsf T_{g,n}(\RR^r_{\geq 0}/\RR_{\geq 0})
\]
is a map of algebraic stacks whose construction we do not explain here. However, since the target is discrete, we can describe informally the information it records.

Some aspects are immediate. Given a logarithmic stable map, we can record the dual graph of the domain and, for each vertex, the cone corresponding to the stratum of $\mathcal X$ to which the component maps. This already determines part of the associated combinatorial type.

A subtler feature is the slope and direction of an edge of the tropical curve. More precisely, for a tropical map each edge has a well-defined ``direction'': it maps with some non-negative integer slope along a line in $\RR_{\geq 0}^r$. To understand what this encodes, first suppose we are given a map
\[
C \to \mathcal X_0
\]
that is {predeformable}. An edge then corresponds to a node $q$ of $C$, and by definition of predeformability this node maps either to the interior of a component or to the double locus.

We can describe the associated combinatorial type (i.e.\ the corresponding point of $a^\star\mathsf T_{g,n}(\RR^r_{\geq 0}/\RR_{\geq 0})$) as follows:

\begin{itemize}
\item[(i)] If the node $q$ maps to the interior of a component, then the tropical map is constant along the corresponding edge.

\item[(ii)] If $q$ maps to a component of the double locus, recall that such components are in bijection with the edges of the simplex. In this case, the edge corresponding to $q$ must map to the edge of the simplex corresponding to that component of the double locus.

\item[(iii)] In the situation of (ii), the slope with which the tropical edge maps is precisely the (well-defined) tangency order of $q$ with the double locus.
\end{itemize}

In the general case, when we have a logarithmic map $C \to \mathcal X_0$ that is not predeformable, one can still extract the edge direction in two ways. One approach uses the logarithmic structure directly, as explained beautifully in~\cite{GS13}. Geometrically, one may instead factor the map $C \to \mathcal X_0$ through an expansion $\mathcal X_0'$ until it becomes predeformable. As we have seen, such an expansion is determined by a polyhedral subdivision of the simplex, and one obtains a similar picture involving the edges of this subdivision.

\begin{remark}
In the case of the moduli space of curves $\Mbar_{g,n}$, the space $\mathsf T_{g,n}$ is the cone complex of stable tropical curves, and the topological space underlying $a^\star\mathsf T_{g,n}$ is the set of stable graphs under specialization by edge contraction. The resulting stratification is the usual one induced by the normal crossings boundary divisor. See~\cite{CCUW} for an honest and insightful treatment. 
\end{remark}

\subsubsection{Stratification in practice}

As defined above, the cone complex $\mathsf T_{g,n}(\RR^r_{\geq 0}/\RR_{\geq 0})$ is very large—it has infinitely many faces of arbitrarily large dimension. Of course, most of these are not geometrically relevant. One can replace $\mathsf T_{g,n}(\RR^r_{\geq 0}/\RR_{\geq 0})$ with a finite type object that retains (or refines) the stratification. We will not go into the details here, but instead summarize the details here and direct the reader to the discussion of boundedness of combinatorial types in~\cite{NS06} and~\cite{GS13}. The finiteness comes from the following three constraints:

\begin{enumerate}[(i)]
\item First, since $\beta$ is fixed, one may decorate each vertex with an effective curve class in such a way that the sum over all vertices is $\beta$.

\item Next, one imposes a stability condition: any vertex whose curve class decoration is $0$ must be stable.

\item Finally, the slopes of the piecewise-linear map to $\RR_{\geq 0}^r$ along edges incident to a given vertex are geometrically constrained by the curve class at that vertex. This is called the \underline{balancing condition}, and should be familiar to tropical geometers.
\end{enumerate}

A key result, proved in~\cite{GS13,NS06}, is that after imposing these conditions one obtains a finite type subcomplex of $\mathsf T_{g,n}(\RR^r_{\geq 0}/\RR_{\geq 0})$ consisting of the geometrically meaningful faces. Although the resulting complex remains extremely complicated, it can be handled in practice using the techniques of tropical geometry.

\subsubsection{Stratification in the expanded theory}

There are analogous stratifications in the expanded theory 
$\Mbar^{\sf exp}_{g,n}(\mathcal{X}/B,\beta)$. 
The most natural one—almost tautological—arises from the stack of expansions 
$\mathsf{Exp}_{\mathcal X/B}$ itself. Indeed, this stack is constructed from a cone stack to begin with, and there is a tautological morphism
\[
\Mbar^{\sf exp}_{g,n}(\mathcal{X}/B,\beta)
\longrightarrow 
\mathsf{Exp}_{\mathcal X/B}.
\]
This endows $\Mbar^{\sf exp}_{g,n}(\mathcal{X}/B,\beta)$ with a stratification indexed by cones in a space of $1$-complexes. 

In practice, it is more convenient to ``blend'' the stratification coming from the stack of expansions with the tropical one. This is straightforward to do -- given a $1$-complex $\mathcal P$ in $\ell\cdot\Delta_r$, we consider the additional moduli of a piecewise linear parameterization
\[
\Gamma\to \mathcal P
\]
whose vertices map exactly onto the vertices of $\mathcal P$ and whose edges are either contracted or map to the edges, i.e. the polyhedral complex $\mathcal P$ is the ``image'' of $\Gamma$. 

After a choice of stack of expansion, there is a corresponding cone stack $\mathsf T^{\sf exp}_{g,n}(\RR_{\geq 0}^r/\mathbb R_{\geq 0})$. 

The cones in this cone complex index the strata of the space $\Mbar^{\sf exp}_{g,n}(\mathcal X/B,\beta)$. As before, though, according to this indexing many (in particular, all but finitely many) strata will be empty. The procedures here are largely insensitive to whether a particular stratum is empty or not. 

As before, geometric considerations ensure that only finitely many points in the target are actually relevant. We refer to~\cite{MR23} for further details.

\section{Curves in pairs}

Let us begin with an snc degeneration $\mathcal X\to B$. We have constructed a space 
$\Mbar_{g,n}^{\sf log}(\mathcal X_0,\beta)$ 
of stable maps to the special fiber. Our goal is to understand this space, and its virtual class, in terms of the components and strata of $\mathcal X_0$.

\subsection{What should the degeneration formula say?}

The geometric idea is straightforward. Suppose first that we are given a predeformable map
\[
C_0\to \mathcal X_0.
\]
After normalizing the domain, we obtain maps (possibly with disconnected domains)
\[
C_i\to X_i,
\]
where the $X_i$ are the components of $\mathcal X_0$. One would like to reverse this procedure and show that any map 
$C_0\to \mathcal X_0$ 
can be reconstructed by gluing together stable maps to the various $X_i$. A cartoon of the idealized picture is shown in Figure~\ref{fig: basic-deg-picture}.
\begin{figure}[h!]
\begin{center}
\includegraphics[scale=0.4]{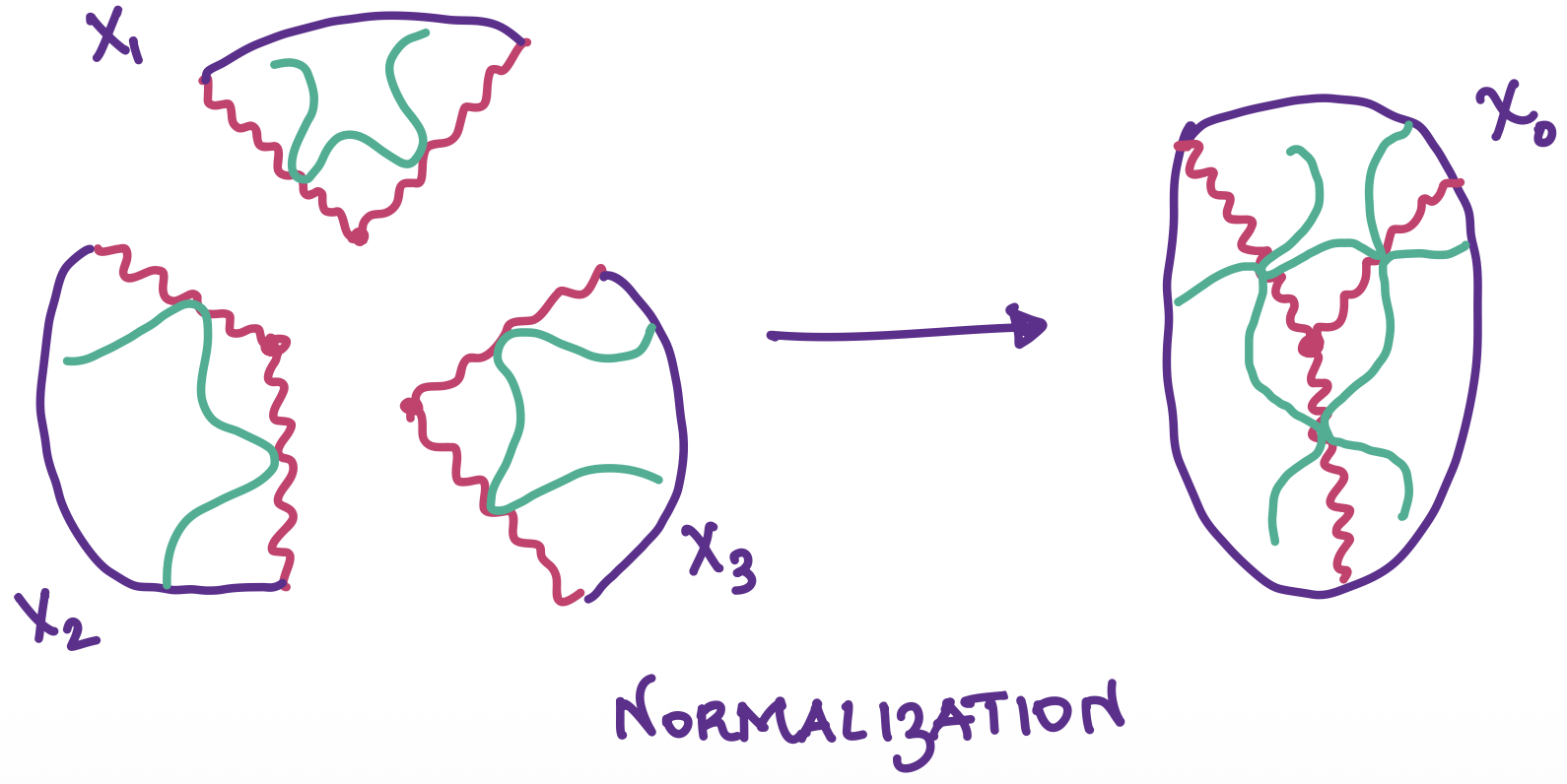}
\end{center}
\caption{A predeformable stable map to the singular fiber and the normalization.}\label{fig: basic-deg-picture}
\end{figure}

There are, however, a few subtleties.

First, given a predeformable point $[C_0\to \mathcal X_0]$ and a node $q$ of $C_0$, the tangency order at $q$ is fixed and in fact locally constant in 
$\Mbar_{g,n}^{\sf log}(\mathcal X_0,\beta)$. 
Thus, in order to glue maps, one must fix the tangency of each $C_i$ along the boundary divisor 
$\partial X_i\subset X_i$, 
where $X_i$ meets the other components of $\mathcal X_0$. 
One must also remember the marked points of $C_i$ at which the gluing will eventually occur. 

Accordingly, we consider maps from nodal curves
\[
f\colon (C,p_1,\ldots, p_m)\to (X|\partial X), 
\qquad 
\partial X = \sum D_j,
\]
such that $f^{-1}(\partial X)\subset \{p_i\}$ and the tangency orders $c_{ij}$ of $f$ at $p_i$ along $D_j$ are fixed. This defines a locally closed—though typically neither open nor closed—substack of the stack of ordinary stable maps. This is the source of the ``fixed tangency'' part of the problem. 

Second, away from the predeformable locus one must decide how to proceed. Once formulated correctly, this issue exactly parallels the situation earlier in the study of maps to $\mathcal X_0$, and again requires enlarging the framework. And in fact, again, there are three equivalent ways out -- cheap, expanded, and logarithmic. 

Ultimately, we are looking for a formula at the level of both spaces and virtual classes that is modelled on the geometric picture we have just described:
\[
\begin{tikzpicture}
\node at (0,1.6) {\textbf{The shape of the degeneration formula}};
\node[
  draw,
  rounded corners=8pt,
  inner sep=12pt
] at (0,0)
{$
\left\{
\begin{array}{c}
\text{Curves in } \mathcal X_0 \\
\text{with fixed tangency}\\
\text{with the double locus}
\end{array}
\right\}
\;=\;
\left(
\prod_i
\left\{
\begin{array}{c}
\text{Curves in } X_i \\
\text{with fixed tangency with $\partial X_i$}
\end{array}
\right\}
\right)
\;\cap\;
\{\text{gluing conditions}\}.
$};
\end{tikzpicture}
\]
The gluing theorem we will prove will exactly be of this form, although we stress that the naive forms of it, i.e. the simplest generalizations of Jun Li's formula~\cite{Li02}, are false!

Note also that the $X_i$ should be understood as irreducible components not just of $\mathcal X_0$ but possibly also in further expansions of it. In other words, $X_i$ is typically a toric compactification of a normal torus bundle over a stratum.

\subsection{Everything from before, done again}

Let $(X|\partial X)$ be a pair consisting of a smooth projective variety and $\partial X$ a simple normal crossings divisor with components $D_1,\ldots, D_r$. We now speedily construct the three versions of the space of stable maps to a pair. 

The reader may wish to keep the following in mind as we go through the discussion. The locus we already know is relevant is the space of maps from pointed curves to $X$, with fixed tangency along the components of $\partial Y$. As the data move in moduli the entire curve, or a component of it, can fall into $\partial X$. Once this happens, there is no obvious way of measuring the contact order. In our earlier discussion, we wanted to understand how to \underline{degenerate} the space $\Mbar_{g,n}(\mathcal X_\eta,\beta)$. Now we want to understand how to \underline{compactify} the space of maps to $X$ with fixed tangency with $\partial X$. 

\subsubsection{The non-degenerate locus}

We fix a genus $g$, a number $n$ of marked points, and a curve class $\beta$. Fix a matrix $[c_{ij}]$ of integers, where $i$ ranges over $\{1,\ldots, n\}$ and $j$ ranges over $\{1,\ldots, m\}$. We use the symbol $\Lambda$ to package all the discrete data $(g,n,\beta,[c_{ij}])$. 

The basic object, which will form the interior of our moduli space, is the space $\mathsf M_\Lambda(X|\partial X)$ whose points are flat families of maps from marked smooth curves
\[
\begin{tikzcd}
(C,p_1,\ldots,p_n)\arrow{d}\arrow{r}{f} & (X|\partial X)\\
S
\end{tikzcd}
\]
where $f^{-1}(\partial X)$ is contained in the union of the marked sections, with tangency, curve class, and genus specified by $\Lambda$. This is a Deligne--Mumford stack, typically non-proper. The virtual/expected dimension of this moduli problem is
\[
\mathsf{vdim} \ \mathsf M_\Lambda(X|\partial X) = (\dim X-3)(1-g)+\int_\beta c_1(T_{X|\partial X}^{\sf log})+n. 
\]
The space does carry a virtual fundamental class in this virtual dimension, but it is not proper. 

\subsubsection{The cheap trick}

For the cheap space, we again have two basic requirements: properness and independence from the global geometry of $X$. 

Properness requires that the stable map limit in $\Mbar_{g,n}(X,\beta)$ of any family in $\mathsf M_\Lambda(X|\partial X)$ should lie in the proposed space. Independence from global considerations can be formulated as follows. Consider a strict morphism $(X|\partial X)\to (Y|\partial Y)$, meaning that every divisor component in $\partial X$ is the pullback of a component in $\partial Y$. Then, for any ordinary stable map to $X$, if the composition with $X\to Y$ lies in the cheap logarithmic space, the original stable map should lie there as well.

We now define the moduli space. We label the components of $\partial X$ by the set $\{1,\ldots,r\}$. The stack $\mathcal A^r = [\mathbb A^r/\mathbb G_m^r]$ is the universal target for maps of the form above. A map $f\colon C\to[\mathbb A^r/\mathbb G_m^r]$ is exactly an $r$ tuple of pairs $(L_i,s_i)$ of line bundles and sections. We have fixed $\Lambda$ so in particular a contact order matrix.

 Inside the mapping stack $\mathfrak M_{g,n}(\mathcal A^r)$, we can consider the locus
 \[
 \mathfrak M^\circ_\Lambda(\mathcal A^r|\partial \mathcal A^r)\subset \mathfrak M_{g,n}(\mathcal A^r)
 \]
 where $s_j$ is generically nonzero in a neighborhood of $p_i$, vanishes at $p_i$ to order $c_{ij}$, and the locus where any of the $s_j$ vanish is contained in the set $\{p_i\}$. This is just spelling out what the condition we had for maps to $(X|\partial X)$ for the target $(\mathcal A^r|\partial \mathcal A^r)$.
 
Note that $\mathfrak M^\circ_\Lambda(\mathcal A^r|\partial \mathcal A^r)$ is naturally identified with the stack of smooth pointed curves, so it is irreducible of dimension $3g-3+n$.

\begin{definition}
Let $ \mathfrak M^{\sf ch}_\Lambda(\mathcal A^r|\partial \mathcal A^r)$ be the closure of $ \mathfrak M^\circ_\Lambda(\mathcal A^r|\partial \mathcal A^r)\subset \mathfrak M_{g,n}(\mathcal A^r)$. The stack $\Mbar^{\sf ch}_\Lambda(X|\partial X)$ of cheap logarithmic maps to $(X|\partial X)$ is defined by the pullback diagram:
\[
\begin{tikzcd}
\Mbar^{\sf ch}_{\Lambda}(X|\partial X)\arrow{r}\arrow{d} & \mathfrak M^{\sf ch}_\Lambda(\mathcal A^r|\partial \mathcal A^r) \arrow{d} \\
\Mbar_{g,n}(X,\beta)\arrow{r} & \mathfrak M_{g,n}(\mathcal A^r). 
\end{tikzcd}
\]
\end{definition}

The bottom horizontal arrow is equipped with a relative perfect obstruction theory, see for instance~\cite{ACW,BNR22}. The top right space is irreducible and so we obtain a class
\[
[\Mbar^{\sf ch}_{\Lambda}(X|\partial X)]^{\sf vir}\in \mathsf{CH}_\star(\Mbar^{\sf ch}_{\Lambda}(X|\partial X);\QQ). 
\] 

Again, this is really the smallest theory that we could possibly take that doesn't depend on the global geometry of $(X|\partial X)$ in a serious way. The expanded theory will be the pullback of a birational modification and the logarithmic theory will be the pullback of a normalization. 

\subsubsection{Expansions}

We next describe the approach using expansions. An \underline{expansion} of the pair $(X | \partial X)$ is obtained as the special fiber of a degeneration constructed from the trivial family
\[
X \times \mathbb{A}^1 \to \mathbb{A}^1,
\]
by (after possible base change) iteratively blowing up strata supported in the special fiber.

When $\partial X$ is smooth and we're in the situation studied by Jun Li, this just inserts projective line bundles over $\partial X$, possibly repeatedly. In the snc case the expansions can be more complicated, but each irreducible component of the expansion is an equivariant compactification of a torus torsor over a stratum of $\partial X$. Again, the object is constructed by a combinatorial construction:
\[
\left\{
\begin{array}{c}
\text{polyhedral decompositions of } \RR_{\ge 0}^r \\
\text{with integral vertices}
\end{array}
\right\}
\;\;\longleftrightarrow\;\;
\left\{
\text{expansions of } (\mathcal A^r | \partial \mathcal A^r)
\right\}.
\]
It might be visualized as in Figure~\ref{fig: exps2}.
\begin{figure}[h!]
\begin{center}
\includegraphics[scale=0.4]{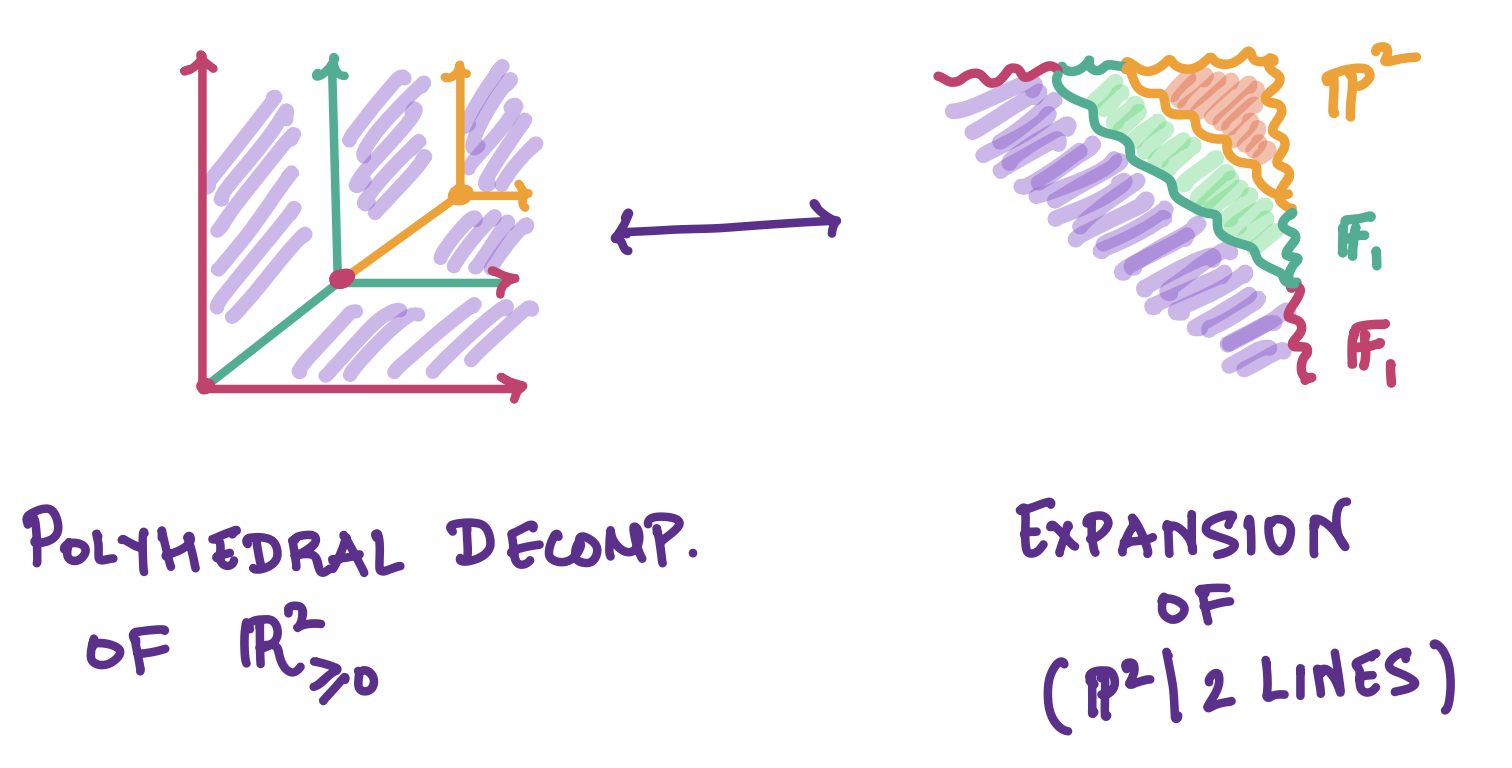}
\end{center}
\caption{An expansion of the pair $\mathbb P^2$ where $\partial \mathbb P^2$ is a union of two lines. The expansion is obtained geometrically by degeneration to the normal cone of the $0$-stratum, and then the same again. The polyhedral decomposition is shown on the left.}\label{fig: exps2}
\end{figure}

Since we have a map
\[
X\to\mathcal A^r
\]
we can pullback to get expansions of $(X|\partial X)$\footnote{Note that some of the intersections of divisors in $\partial X$ can be empty, so expansions of $X$ and $\mathcal A^r$ are not quite in bijection.}. 

As in the case of degenerations, these objects assemble into a stack. 
After fixing a polyhedral decomposition of the relevant tropical parameter space, one obtains a stack
\[
\mathrm{Exp}(X | \partial X)
\]
parametrizing expansions of the pair. Again, there are choices here; the choices form an inverse system and do not affect the invariants or cycles that we will eventually build. There is a family fitting into a diagram
\[
\begin{tikzcd}
\mathcal{X} \arrow[d] \arrow[r] & X \\
\mathrm{Exp}(X | \partial X)
\end{tikzcd}
\]
where the vertical arrow is the universal expanded target and the horizontal arrow is the natural contraction to $X$. The fiber over a point of $\mathrm{Exp}(X | \partial X)$ is the corresponding expansion of $(X|\partial X)$. Some of the components of the fibers are $\mathbb P^1$-bundles and are designated \underline{tube components}. As before, given an expansion $\mathcal X$, we can contract the tube components to get $\mathcal X\to\overline{\mathcal X}$. 

For each component $D_i\subset\partial X$, we can specialize it to any expansion to get a corresponding expanded divisor in any expansion of $X$, denoted $\mathcal D_i$. We will measure tangency along these expanded divisors. 

We can define the moduli space of stable maps to expansions in parallel with the degeneration case. 

\begin{definition}[Expanded maps to pairs]
The stack $\Mbar^{\sf exp}_\Lambda(X|\partial X)$ is the fibered category over schemes whose fiber over $S$ consists of a family of expansions $\mathcal X/S$ together with a map from a nodal curve
\[
f\colon (C,p_1,\ldots,p_n)\longrightarrow \mathcal X \longrightarrow X
\]
to an expansion of $X$ along $\partial X$, subject to the following fiberwise conditions:
\begin{enumerate}[(i)]
    \item The preimage of $\partial \mathcal X$ in $C$ is contained in the set of marked points $\{p_i\}$;
    \item the preimage of the singular locus of $\mathcal X$ is contained in the singular locus of $C$;
    \item the map is predeformable, i.e.\ the tangency order of a node of $C$ with the singular locus of $\mathcal X$, measured after normalization, is equal on the two branches of the node;
    \item for each geometric point $s \in S$, let $T \subset \mathcal X_s$ be a tube component with projection
\[
\pi_T : T \to D
\]
exhibiting it as a $\mathbb P^1$-bundle. If $C'_s \subset C_s$ is an irreducible component such that 
$f(C'_s) \subset T$, then
\[
f|_{C'_s} : C'_s \longrightarrow T
\]
factors through a fiber of $\pi_T$ and is a finite cover of that 
$\mathbb{P}^1$-fiber, ramified only over the sections 
$0$ and $\infty$. In particular, there is a factorization $\overline C_s \to \overline{\mathcal X}_s$ obtained by contracting all tube components and the components of their preimages;

    \item the map $\overline C_s \to \overline{\mathcal X}_s$ has finite automorphism group over $X$.
\end{enumerate}
The tangency of the map along the specialized divisors $\mathcal D_j$ at the point $p_i$ is $c_{ij}$ for all $i$ and $j$, and the curve class is measured after pushforward to $X$.
\end{definition}

We again have the analogous theorem, proved in~\cite{MR23} following~\cite{R19}.

\begin{theorem}
The fibered category $\Mbar^{\sf exp}_\Lambda(X|\partial X)$ is representable by a proper Deligne--Mumford stack. The morphism
\[
\Mbar^{\sf exp}_\Lambda(X|\partial X)\to \mathsf{Exp}(X|\partial X)
\]
is equipped with a relative perfect obstruction theory. The target is irreducible, so there is a virtual class
\[
[\Mbar^{\sf exp}_\Lambda(X|\partial X)]^{\sf vir}\in \mathsf{CH}_\star(\Mbar^{\sf exp}_\Lambda(X|\partial X);\QQ).
\]
\end{theorem}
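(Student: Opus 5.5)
The plan is to reduce the pairs statement to the degeneration case already treated, via the deformation to the normal cone. We use the same three-step shape as the theorem for $\Mbar^{\sf exp}_{g,n}(\mathcal X/B,\beta)$: algebraicity and the Deligne--Mumford property, then properness via the valuative criterion, then the obstruction theory relative to $\mathsf{Exp}(X|\partial X)$.

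First, representability. The stack $\mathsf{Exp}(X|\partial X)$ is an Artin stack, built by the Artin fan construction from a chosen polyhedral structure, and it carries a universal expansion $\mathcal X\to\mathsf{Exp}(X|\partial X)$ that is flat and projective over the base. The relative stack of prestable maps to the fibers of this family is therefore algebraic. Conditions (i)--(iii), namely the position of $f^{-1}(\partial\mathcal X)$, the behaviour of nodes over the singular locus, and predeformability, cut out a locally closed substack; the tangency orders $c_{ij}$ are constant on it. The tube condition (iv) is closed in this substack. Stability (v) is open, and it forces finite automorphisms over $X$. The one subtlety is that the $\mathbb G_m$-scalings of non-tube $\mathbb P^1$-bundle components must act with finite stabilizers. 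This holds because, as in Li's theory, each non-tube bundle component contains a non-trivial component of the curve.

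Second, the virtual class. The key input is that $\mathcal X\to\mathsf{Exp}(X|\partial X)$ is log smooth. On the predeformable locus, maps to it are deformed unobstructedly along the double locus, exactly as in the proof that predeformable maps to $\mathcal A_{B,0}$ smooth. We take the relative obstruction theory to be $R^\bullet\pi_\star f^\star T^{\sf log}_{\mathcal X/\mathsf{Exp}}$, and check perfectness in $[-1,0]$ from the curve being one-dimensional. To compare it with the deformation theory relative to the stack of maps to expansions of $\mathcal A^r$, we factor through $\mathfrak M^{\sf exp}_\Lambda(\mathcal A^r|\partial\mathcal A^r)$, which is birational to the irreducible $\mathfrak M^{\sf ch}_\Lambda(\mathcal A^r|\partial \mathcal A^r)$. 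Manolache's virtual pullback of its fundamental class then gives the class, and irreducibility of the target ensures it is well defined.

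The main obstacle is properness. To handle it, we reduce to the degeneration case: apply the deformation to the normal cone, or simply take $X\times\mathbb A^1\to\mathbb A^1$ with the divisor $\partial X\times\mathbb A^1 + X\times 0$. Given a family over the punctured spectrum of a DVR, Tevelev--Ulirsch transversality is first applied to the image to make it dimensionally transverse after strata blowups. Li's procedure, as in the transversalization theorem, then produces after base change an expansion and a dimensionally transverse limit. By the theorem on predeformability of limits, this limit is predeformable. It remains to (a) make the limiting expansion a point of the chosen $\mathsf{Exp}(X|\partial X)$, which may require further subdivision compatible with the fixed polyhedral structure, with the new $\mathbb P^1$-bundle components declared tubes; and (b) prove separatedness. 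For (a), the expected difficulty is that the tropical limit only determines a $1$-complex, not a cone of the chosen structure. I would first refine the $1$-complex to an admissible one and check that the tube condition (iv) holds automatically on the inserted bundles, since the curve meets them only in trivial covers. For (b), I would compare two limits through their tropical types, and argue that stability after tube contraction rigidifies the choice of expansion, using uniqueness of the limit in $\Mbar_{g,n}(X,\beta)$ and the fact that the expansion is pulled back from the combinatorial stack over $\mathcal A^r$.
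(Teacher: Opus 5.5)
The algebraicity and virtual-class parts of your proposal are broadly compatible with the paper's framework. Your obstruction theory is $R^\bullet\pi_\star f^\star T^{\sf log}_X$, coming from the smooth morphism $X\to\mathcal A^r$. It should be taken relative to the stack $\mathfrak M^{\sf exp}_\Lambda(\mathcal A^r|\partial\mathcal A^r)$ of maps to expansions of $\mathcal A^r$; relative to $\mathsf{Exp}(X|\partial X)$ alone it would miss the deformations of the curve. Virtual pullback of the fundamental class of that irreducible stack then gives the class.

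The genuine gap is properness. You correctly single it out, but you only sketch it, and the sketch breaks in three places.

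\emph{Existence.} Your argument starts from a map $\mathcal C_\eta\to X$ to the unexpanded target. The valuative criterion must be checked for every DVR, including those whose generic point already lies over a non-trivial cone of $\mathsf{Exp}(X|\partial X)$. You cannot reduce to the unexpanded locus, because it need not be dense. For example, take $(\PP^2|L)$ in degree $2$, with one marking of contact order $2$ and genus $g\geq 1$. Consider the maps consisting of a conic tangent to $L$ glued to a genus $g$ double cover of a fiber of the bubble $\PP(\mathcal O_L\oplus N_{L/\PP^2})$, totally ramified over the two sections. These are not in the closure of maps to $\PP^2$: in any such limit the genus $g$ part stays contracted.

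\emph{Landing in the chosen $\mathsf{Exp}$.} The Tevelev--Ulirsch/Li procedure produces \emph{some} strata blowup dictated by the image surface, not a point of the fixed $\mathsf{Exp}(X|\partial X)$. If a marking has positive contact with both $D_j$ and $D_k$, the image meets $(D_j\cap D_k)\times\Delta$ along a whole section. Transversality then forces a blowup of a horizontal stratum, which changes the general fiber. The marking's limit also lands in a codimension-three stratum, so deleting deep strata to reach Li's double-point setting no longer works. More generally, the resulting expansion can have vertices carrying no curve, or non-$\PP^1$-bundle components carrying only trivial pieces of the curve. These are neither tubes nor stable, and contracting them can destroy predeformability. ``Refining the $1$-complex to an admissible one'' is exactly the compatibility between tropical types and the chosen polyhedral structure. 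It has to be arranged uniformly, not limit by limit.

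\emph{Separatedness.} Uniqueness of limits in $\Mbar_{g,n}(X,\beta)$ cannot suffice. The map $\Mbar^{\sf exp}_\Lambda(X|\partial X)\to\Mbar^{\sf ch}_\Lambda(X|\partial X)\subset\Mbar_{g,n}(X,\beta)$ has positive-dimensional fibers, and $\mathsf{Exp}(X|\partial X)$ has non-proper torus stabilizers. Uniqueness of the expanded limit has to come from two facts you do not establish: the tropicalized family lies in the relative interior of a unique cone of the chosen structure, and the map is stable after tube contraction.

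The missing idea is the route the paper points to; the proof itself is cited from \cite{MR23,R19}. Do not run a valuative argument on $\Mbar^{\sf exp}_\Lambda(X|\partial X)$ directly. Instead, exhibit it as the base change, along $\Mbar^{\sf ch}_\Lambda(X|\partial X)\to\mathfrak M^{\sf ch}_\Lambda(\mathcal A^r|\partial\mathcal A^r)$, of a proper birational morphism $\mathfrak M^{\sf exp}_\Lambda(\mathcal A^r|\partial\mathcal A^r)\to\mathfrak M^{\sf ch}_\Lambda(\mathcal A^r|\partial\mathcal A^r)$ of universal-target stacks. That morphism is built purely combinatorially, from cone stacks of tropical maps and $1$-complexes passed through the Artin fan construction; in \cite{R19} the logarithmic counterpart appears as a logarithmic modification of the space of logarithmic maps. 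Properness is then inherited from $\Mbar^{\sf ch}_\Lambda(X|\partial X)$, which is closed in the proper stack $\Mbar_{g,n}(X,\beta)$. Your issues (a) and (b) are settled once, on cone complexes, where no global geometry of $X$ intervenes. The same Cartesian square is also what identifies your virtual pullback with the cheap virtual class after pushforward.
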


By recognizing this as the pullback of a proper birational morphism to the stack of cheap prestable maps to $(\mathcal A^r|\partial\mathcal A^r)$, formal properties of virtual intersection theory imply that pushforward along
\[
\Mbar^{\sf exp}_\Lambda(X|\partial X)\to \Mbar^{\sf ch}_\Lambda(X|\partial X)
\]
identifies virtual classes. 

\subsubsection{Logarithmic maps}

The logarithmic approach, being quite categorical in nature, requires very little change. We start with a few words about the discrete data. 

Equip $(X|\partial X)$ with the divisorial logarithmic structure. Let $C/S$ be a logarithmically smooth curve with marked sections $p_i$ and a morphism
\[
\begin{tikzcd}
C\ar[d]\ar[r]& X\\
S.
\end{tikzcd}
\]
Fix a marked point $p_i$ and a divisor $D_j$. The logarithmic structure induces a map of characteristic monoids
\[
\NN\;\cong\; \overline{\mathcal{M}}_{X,D_j,f(p_i)} \;\longrightarrow\; \overline{\mathcal{M}}_{C,p_i} \cong\; \mathbb{N},
\]
and the image of the generator of $\overline{\mathcal{M}}_{X,D_j,f(p_i)}$ is the \underline{contact order} of $f$ along $D_j$ at $p_i$. When the logarithmic structure of $S$ is trivial and we have a map
\[
(C,p_1,\ldots,p_n)\to (X|\partial X)
\]
the contact orders are just the scheme theoretic tangency order. 

It is a basic feature of logarithmic geometry that these contact orders are locally constant in flat families. The genus $g$ and marking $n$ are fixed and the curve class is measured with respect to the underlying map. 

Again, we have a moduli problem $\Mbar^{\sf log}_\Lambda(X|\partial X)$ of logarithmic stable maps. It is a stack on the category of fine and saturated logarithmic schemes. The representability theorem of Abramovich--Chen~\cite{AC11,Che10} and Gross--Siebert~\cite{GS13} is the following. 

\begin{theorem}
The moduli problem $\Mbar^{\sf log}_\Lambda(X|\partial X)$ is represented by a proper Deligne--Mumford stack equipped with logarithmic structure. 
\end{theorem}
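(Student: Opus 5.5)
The plan follows the degeneration case: realize $\Mbar^{\sf log}_\Lambda(X|\partial X)$ as the base change of a universal object over the Artin fan $\mathcal A^r=[\mathbb A^r/\mathbb G_m^r]$, and deduce representability and properness from properties of that object plus the known properness of ordinary stable maps.

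First, we construct the universal object. We work with the logarithmic mapping problem $\mathfrak M^{\sf log}_\Lambda(\mathcal A^r|\partial\mathcal A^r)$ of logarithmic curves mapping to $\mathcal A^r$ with contact orders $[c_{ij}]$, with no stability imposed. We show it is representable by a logarithmic algebraic (Artin) stack, locally of finite type. For this we use the tropical description from the section on tropical stratifications. The characteristic monoid of the base at a point is determined by the combinatorial type of the tropical map $\Gamma\to\RR^r_{\geq 0}$, meaning the cone of $\mathsf T_\Lambda$. Consequently there is a \emph{minimal} (basic) logarithmic structure: every logarithmic map over $S$ is pulled back, via a unique morphism of monoids, from the one whose characteristic monoid is dual to the cone of tropical maps of that type. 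As in Abramovich--Chen and Gross--Siebert, the existence of minimal objects, stable under base change, lets us descend the log-scheme moduli problem to a stack over schemes carrying a logarithmic structure. The underlying stack is then an open substack of a stack of prestable curves with extra discrete and torsor data, which is algebraic.

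Second, we take the fiber product. Because $X\to\mathcal A^r$ is strict, a logarithmic map to $X$ is the same as an ordinary map to $X$ together with a logarithmic lift of the composite to $\mathcal A^r$. Hence
\[
\Mbar^{\sf log}_\Lambda(X|\partial X)=\Mbar_{g,n}(X,\beta)\times_{\mathfrak M_{g,n}(\mathcal A^r)}\mathfrak M^{\sf log}_\Lambda(\mathcal A^r|\partial\mathcal A^r).
\]
Finiteness of automorphisms follows from stability of the underlying map together with finiteness of the automorphisms of minimal log structures over the identity. This gives a Deligne--Mumford stack. Finite type follows from the boundedness of combinatorial types (curve-class decorations, stability and balancing), which was discussed earlier.

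Third, we prove properness. It suffices to show that the forgetful map to $\Mbar_{g,n}(X,\beta)$ is proper, and we would check this with the valuative criterion. Given a DVR with a log map on the generic point, we take the stable-map limit of the underlying map. The remaining task is to extend the logarithmic structure, after finite base change, uniquely up to the minimal-object ambiguity. Our plan is to use the cheap/expanded picture. By the transversality theorem and the predeformability theorem (adapted to pairs via degeneration to the normal cone $X\times\mathbb A^1$), the limit factors through a predeformable map to an expansion. The logarithmic structure of the expansion then furnishes the extension. For uniqueness we would use that the map $\mathfrak M^{\sf log}\to\mathfrak M^{\sf ch}$ is quasi-finite and the normalization, as in the comparison discussion, so the extension is determined up to finitely many choices and is unique on the normal space. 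We expect the main obstacle to be the existence half, namely producing the minimal log structure on the limit. We would first attempt it directly: tropicalize the DVR family, read off the vertex positions from orders of vanishing of the pulled-back monomials along the components of the special fiber, check that the edge slopes agree with the contact orders, and only fall back on expansions if this tropical bookkeeping fails at nodes mapping into deep strata.
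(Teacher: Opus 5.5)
The paper does not prove this statement; it quotes Abramovich--Chen and Gross--Siebert. Your first two steps follow their architecture: minimal log structures dual to the cones of tropical maps, the Cartesian square over maps to $\mathcal A^r$ coming from strictness of $X\to\mathcal A^r$, and boundedness of combinatorial types for finite type.

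The gap is in the existence half of the valuative criterion. The transversality theorem you invoke needs as input a map over $\eta$ to the \emph{unexpanded} general fibre that is transverse to the boundary. For pairs, this means no component or node of $C_K$ maps into $\partial X$. The criterion, however, must hold for every $K$-point of $\Mbar^{\sf log}_\Lambda(X|\partial X)$, including those whose minimal log structure is nontrivial because components of $C_K$ lie in $\partial X$. You cannot restrict to transverse generic points, because the transverse locus is not dense.

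For instance, take $(\mathbb P^2|L)$ with $L$ a line, genus one, degree two, and two markings of contact order one. Let $\ell_1,\ell_2$ be distinct lines through a point $x\in L$, joined by an elliptic component $E$ contracted to $x$ that carries both markings, with $p_1+p_2\sim n_1+n_2$ on $E$ (this is the vertex condition at the tropical vertex of $E$). This gives a six-dimensional family of log maps, and no map transverse to $L$ specializes to it. Indeed, every generization of its dual graph would need one of the following: a genus-one component of degree one; a double cover of a line, whose limits have image a single line; or contact-one markings on a component contracted away from $L$.

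Incidentally, the obstacle you anticipate in the transverse case does not occur. On the total space, with local equations $xy=t^k$ at the nodes, any function whose divisor is supported on the two branches has multiplicities congruent modulo $k$. So the divisorial log structure works directly, slopes are automatically integral, and no expansions are needed there.

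The uniqueness half is also not established. Quasi-finiteness allows finitely many extensions, not one. Moreover, in the paper the identification of $\mathfrak M^{\sf log}\to\mathfrak M^{\sf ch}$ with the normalization is deduced from the Abramovich--Chen--Marcus--Wise result that this map is quasi-finite \emph{and proper}. That is the universal form of what you are trying to prove, so using it is circular. Were it available, $\mathfrak M^{\sf log}_\Lambda(\mathcal A^r|\partial\mathcal A^r)\to\mathfrak M_{g,n}(\mathcal A^r)$ would be finite, and properness would follow at once from your Cartesian square and properness of $\Mbar_{g,n}(X,\beta)$; the valuative analysis would then be unnecessary.

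A workable repair is to run the valuative criterion for that universal map and then base change along your square. The source is log smooth, so its log-trivial locus is dense. You need boundedness and separatedness first. For separatedness, show that once the underlying family over $R$ is fixed, the minimal log structure over the closed point is forced by the generic one together with the orders of vanishing of the pulled-back boundary equations along the central fibre. With these in hand, existence only needs checking for DVRs with log-trivial generic point, where the direct construction above suffices. Alternatively, follow Gross--Siebert and Chen, who extend an arbitrary nontrivial generic minimal log structure directly.
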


The authors prove there is a virtual structure. One way to describe it, slightly different from how it was first presented, is as follows. There is again a ``universal target' version of the moduli space, denoted $\mathfrak M^{\sf log}_\Lambda(\mathcal A^r|\partial \mathcal A^r)$. Paraphrasing Abramovich--Wise from~\cite{AW}, we have:

\begin{theorem}
The morphism 
\[
\Mbar^{\sf log}_\Lambda(X|\partial X)\to \mathfrak M^{\sf log}_\Lambda(\mathcal A^r|\partial \mathcal A^r)
\]
is equipped with a relative perfect obstruction theory leading to a virtual class
\[
[\Mbar^{\sf log}_\Lambda(X|\partial X)]^{\sf vir}\in \mathsf{CH}_\star(\Mbar^{\sf log}_\Lambda(X|\partial X);\mathbb Q). 
\]
\end{theorem}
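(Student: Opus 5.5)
We have to show that the morphism $\Mbar^{\sf log}_\Lambda(X|\partial X)\to \mathfrak M^{\sf log}_\Lambda(\mathcal A^r|\partial \mathcal A^r)$ carries a relative perfect obstruction theory. Following Abramovich--Wise, the plan is to recognise the source as a relative mapping space and then apply Manolache's virtual pullback, exactly as in the degeneration case. The first step is to note that the strict morphism $X\to\mathcal A^r$, given by the divisors $D_1,\ldots,D_r$ with their divisorial log structures, is logarithmically smooth. By the local structure fact (iii), this is in fact a smooth morphism of underlying stacks, with relative tangent bundle $T^{\sf log}_{X}$, since $\mathcal A^r$ has trivial logarithmic tangent bundle.

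The second step identifies the moduli problem. Let $C\to S$ be a log curve with a log map to $\mathcal A^r$ of contact data $\Lambda$. Because $X\to\mathcal A^r$ is strict, a lift of this map to a log map $C\to X$ is the same as a lift of the underlying map of stacks. Consequently the square
\[
\begin{tikzcd}
\Mbar^{\sf log}_\Lambda(X|\partial X)\arrow{r}\arrow{d} & \mathfrak M^{\sf log}_\Lambda(\mathcal A^r|\partial \mathcal A^r)\arrow{d}\\
\Mbar_{g,n}(X,\beta)\arrow{r} & \mathfrak M_{g,n}(\mathcal A^r)
\end{tikzcd}
\]
is Cartesian on underlying stacks, which we check functorially in $S$. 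Equivalently, the source is the stack of sections of $C\times_{\mathcal A^r}X\to C$ over the universal curve $\mathfrak C$ of $\mathfrak M^{\sf log}_\Lambda$. Stability and the curve class only cut out an open substack, so they do not affect this identification.

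The third step constructs the obstruction theory. On a stack of sections of a smooth morphism with proper flat source curve, the standard deformation theory gives the relative perfect obstruction theory
\[
E^\vee = R\pi_\star f^\star T^{\sf log}_{X}\to \mathbb L^\vee,
\]
where $\pi$ is the universal curve and $f$ the universal map. This is proved either by the usual argument of Behrend--Fantechi for Hom stacks or by base change from the bottom row of the square. It is perfect in amplitude $[0,1]$ because $\pi$ has one-dimensional fibres and $T^{\sf log}_X$ is locally free. The fourth step produces the class. We check that $\mathfrak M^{\sf log}_\Lambda(\mathcal A^r|\partial\mathcal A^r)$ is pure-dimensional of dimension $3g-3+n$: it is log smooth over the stack of log curves, or equivalently it is the normalization of the irreducible cheap space. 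The virtual class is then Manolache's virtual pullback of its fundamental class.

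I expect the main obstacle to be the Cartesian identification in the second step. Fibre products in the fs log category need not have the same underlying stack as the scheme-theoretic fibre product. The point I would verify carefully is that strictness of $X\to\mathcal A^r$ makes the log fibre product agree with the ordinary one, so that no saturation occurs, and that fixing the contact orders $\Lambda$ is compatible on both sides.
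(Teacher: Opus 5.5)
Your proposal is correct and follows essentially the route the paper has in mind (Abramovich--Wise, parallel to the degeneration case). There, $X\to\mathcal A^r$ is strict and smooth with relative tangent bundle $T^{\sf log}_X$, and the log mapping space is the pullback of $\Mbar_{g,n}(X,\beta)\to\mathfrak M_{g,n}(\mathcal A^r)$ along $\mathfrak M^{\sf log}_\Lambda(\mathcal A^r|\partial\mathcal A^r)\to\mathfrak M_{g,n}(\mathcal A^r)$. So the obstruction theory $R\pi_\star f^\star T^{\sf log}_X$ pulls back, and Manolache's virtual pullback of the fundamental class of the pure-dimensional stack $\mathfrak M^{\sf log}_\Lambda(\mathcal A^r|\partial\mathcal A^r)$ gives the class.

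Two small slips. First, the obstruction theory map runs $\mathbb L^\vee\to E^\vee$ (i.e.\ $E\to\mathbb L$), not the other way. Second, the dimension $3g-3+n$ comes from $\mathfrak M^{\sf log}_\Lambda$ being log \emph{\'etale}, not merely log smooth, over the stack of log curves; your normalization argument also gives it.
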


\subsubsection{Tropical stratification}

Again, there is a tropical stratification, with a few minor changes. As before, the tropical curve $\Gamma$ is the dual graph of a prestable curve with infinite legs. We will be interested in piecewise linear maps
\[
\Gamma \to \mathbb R_{\geq 0}^r. 
\]
We will be interested in dual graphs of genus $g$, with vertices decorated by effective curve classes $\beta_V$ with total degree $\beta$. 

The contact orders impose additional conditions. For each marked point $p_i$ we have a column of the contact order matrix $[c_{ij}]$. This in turn determines a vector in $ \mathbb R_{\geq 0}^r$ -- take the weighted sum of the corresponding ray generators. This vector can be viewed as a weight times a primitive vector. We demand that the map sends the half edge corresponding to $p_i$ onto a line in the directions of this primitive vector, and with slope given by the weight. 

As before there is a cone stack $T_\Lambda(\RR_{\geq 0}^r)$. It is not of finite type, but by geometric considerations, including the balancing condition~\cite{GS13}, one can pick out a finite type subcomplex. 

\subsection{Evaluation structure}

Any of the three spaces -- cheap, expanded, or logarithmic -- are sufficient for the degeneration formula. The next step is to describe the ingredients required for the ``gluing condition'' in the degeneration. This comes from evaluation maps. 

Fix discrete data $\Lambda = (g,n,\beta,[c_{ij}])$ and a pair $(X|\partial X)$.

For each marked point $p_i$, we associate a stratum
\[
\mathsf{Ev}_{\Lambda,p_i} \subset X
\]
defined as follows. If $c_{ij} > 0$ for some $j$, then
\[
\mathsf{Ev}_{\Lambda,p_i} := \bigcap_{\,j \;:\; c_{ij} > 0} D_j,
\]
the intersection of those boundary components along which the contact order at $p_i$ is positive. If $c_{ij}=0$ for all $j$, we set
\[
\mathsf{Ev}_{\Lambda,p_i} := X.
\]

The logarithmic evaluation maps determine a morphism
\[
\mathsf{ev} \colon 
\Mbar^{\log}_{\Lambda}(X|\partial X)
\;\longrightarrow\;
\prod_{i=1}^n \mathsf{Ev}_{\Lambda,p_i},
\]
whose $i$-th component records the image of the $i$-th marked point.
Analogous evaluation morphisms are defined in the expanded theory. With either theory, we can define invariants.

\begin{definition}[Log GW invariants/cycles]
Let $\gamma_i \in \mathsf{CH}^\star(\mathsf{Ev}_{\Lambda,p_i})$.
The \underline{logarithmic Gromov–Witten invariant} of $(X|\partial X)$ with discrete data $\Lambda$ and insertions $\gamma_1,\dots,\gamma_n$ is:
\[
\big\langle \gamma_1,\dots,\gamma_n \big\rangle^{X|\partial X}_{\Lambda}
:=
\deg \left(
\left( \prod_{i=1}^n \mathsf{ev}_i^\star(\gamma_i) \right)
\cap 
\big[ \Mbar^{\sf log}_{\Lambda}(X|\partial X) \big]^{\mathrm{vir}}
\right).
\]

More generally, if
\[
\pi\colon \Mbar^{\sf log}_{\Lambda}(X|\partial X) \;\longrightarrow\; \Mbar_{g,n}
\]
is the morphism to the moduli space of stable curves, we define \underline{logarithmic Gromov–Witten cycles} by pull/push:
\[
\pi_\star\!\left(
\left( \prod_{i=1}^n \mathsf{ev}_i^\star(\gamma_i) \right)
\cap 
\big[ \Mbar^{\sf log}_{\Lambda}(X|\partial X) \big]^{\mathrm{vir}}
\right)\in \mathsf{CH}^\star(\Mbar_{g,n};\QQ).
\]
Analogous cycles can be defined in (co)homology. 
\end{definition}

\subsection{The logarithmic degeneration formula}\label{sec: deg-formula}

A key ingredient that goes into the degeneration formula is a refined form of intersection theory for pairs, colloquially called \underline{logarithmic intersection theory}. To explain its need, we start with a naive, special case of the degeneration formula that is typically false.

\subsubsection{A false formula}

Let $\mathcal X\to B$ be a projective snc degeneration and curve class $\beta$. Recall that there is a cone stack
\[
\mathsf T_{g,n}(\mathbb R_{\geq 0}/\mathbb R_{\geq 0})\to \RR_{\geq 0}
\]
of vertical tropical maps. By work of Abramovich--Chen--Gross--Siebert~\cite{ACGS15}, in what is called the ``decomposition step'' of the degeneration formula, the special fiber of the mapping space $\Mbar^{\sf log}_{g,n}(\mathcal X_0,\beta)$ is broken into a union:
\[
\Mbar^{\sf log}_{g,n}(\mathcal X_0,\beta) = \bigcup_\gamma \Mbar^{\sf log}_{\gamma}(\mathcal X_0,\beta)
\]
where $\gamma$ is what is called a \underline{rigid tropical map}. In the language we have, this is a combinatorial type of map corresponding to a $1$-dimensional cone of $\mathsf T_{g,n}(\mathbb R_{\geq 0}/\mathbb R_{\geq 0})$ that surjects onto the base. 

Each of the spaces $\Mbar^{\sf log}_{\gamma}(\mathcal X_0,\beta)$ is a \underline{virtual irreducible component} of the special fiber space $\Mbar^{\sf log}_{g,n}(\mathcal X_0,\beta)$. What this really means is that at the level of universal targets,
\[
\mathfrak M^{\sf log}_{g,n}(\mathcal A_{B,0})
\]
each of these $\gamma$ indexes a genuine irreducible component. The map 
\[
\Mbar^{\sf log}_{g,n}(\mathcal X_0,\beta)\to \mathfrak M^{\sf log}_{g,n}(\mathcal A_{B,0})
\]
is equipped with a virtual structure as we have discussed. So by pulling back the irreducible components and their fundamental classes, we get the pieces of the decomposition above, together with natural virtual classes. Note that the intrinsic scheme theoretic multiplicity of these components is encoded by the slope of the map from the ray of $\mathsf T_{g,n}(\mathbb R_{\geq 0}/\mathbb R_{\geq 0})\to \RR_{\geq 0}$. 

In particular, fixing $\gamma$ fixes the topological type of a nodal curve, the strata of $\mathcal X_0$ to which they map, their tangency orders, and the curve classes. In order to see the issues, it suffices to add a simplifying assumption\footnote{If this is not true, we can replace $\mathcal X$ with a blowup to ensure the condition at the outset. This turns out not to change the invariants.}: assume each vertex of $\gamma$ is assigned to an irreducible component (rather than a deeper stratum) of $\mathcal X_0$. At the tropical level, this means that the tropical map along the chosen ray sends vertices of the domain tropical curve to vertices of the target simplex.

The rigid map $\gamma$ can have automorphisms -- automorphisms of the domain graph that preserves the labels and commutes with the map. By passing to a finite cover that labels the edges, we can remove the autormosphisms. Now, for each vertex $V$ (of the source graph of) the rigid map $\gamma$, we can associated a target pair $(X_V|\partial X_V)$ and discrete data $\Lambda_V$.

For each edge $E$ incident to $V$ the moduli space $\Mbar^{\sf log}_{\Lambda_V}(X_V|\partial X_V)$ has an evaluation map
\[
\Mbar^{\sf log}_{\Lambda_V}(X_V|\partial X_V)\to \mathsf{Ev}_{\Lambda_V,E}.
\]
Ranging over all vertices, noticing that each edge shows up twice, and writing $\mathsf{Ev}_{\Lambda_V,E} = \mathsf{Ev}_{E}$ we have maps:
\[
\mathsf{ev}_\gamma\colon \prod_V \Mbar^{\sf log}_{\Lambda_V}(X_V|\partial X_V)\to \prod_E \mathsf{Ev}^2_{E}.
\]

The simplest guess for our gluing condition is to impose the diagonal condition:
\[
\mathsf{ev}_\gamma^\star(\Delta_{\prod \mathsf{Ev}^2_{E}})\cap [\Mbar^{\sf log}_{\Lambda_V}(X_V|\partial X_V)]^{\sf vir}.
\]
There are some combinatorial factors attached to $\gamma$, but we would like to say that, for example after pushing forward to the stack of all maps $\Mbar_{g,n}(\mathcal X_0,\beta)$, that the classes
\[
\mathsf{ev}_\gamma^\star\big(\Delta_{\prod \mathsf{Ev}^2_{E}}\big)\cap 
\big[\Mbar^{\sf log}_{\Lambda_V}(X_V|\partial X_V)\big]^{\sf vir}
\quad\text{and}\quad 
\big[\Mbar_{\gamma}^{\sf log}(\mathcal X_0,\beta)\big]^{\sf vir}.
\]
are proportional, in fact equal up to these bookkeeping factors. This is exactly the statement of Jun Li's degeneration formula in the double point degeneration setting. \underline{However, this is typically false} for simple normal crossings degenerations. 

In fact, the formula fails even at the level of sets -- the evaluation pullback is too big and contains the correct space $\Mbar_\gamma^{\sf log}(\mathcal X_0,\beta)$ as a proper closed subspace. 

The formula is true ``on the interior'', meaning when the logarithmic structure is trivial, but it fails on the boundary, and is essentially an issue of excess intersection theory. The basic issue has to do with the tropical stratifications. The map $\mathsf{ev}_\gamma$ induces a map on stratifications, as all logarithmic maps do, but this induced map is ``nontransverse'' with respect to the stratification.

\subsubsection{Logarithmic enhancements}

To explain the fix to the false formula above, we use the space of expanded maps. This is just an expository preference well-suited to our narrative, and because it is the setting in which the degeneration formula~\cite{MR23} is actually proved. 

We fix a target $(X|\partial X)$, discrete data $\Lambda$, and an evaluation space, taken over all marked points, $\mathsf{Ev}_\Lambda$. We impose an additional condition that in the matrix $c_{ij}$ for each marked point $p_i$ there is at most one $j$ such that $c_{ij}$ is nonzero. This condition is not restrictive -- we can achieve it by blowing up $(X|\partial X)$ along strata. By work of Abramovich--Wise in the logarithmic setting this does not change the GW theory~\cite{AW}. 

For each marked point $p_i$, the stratum $\mathsf{Ev}_{\Lambda,p_i}$ itself has a natural snc divisor -- its intersection with the divisors that do not contain it. The space $\mathsf{Ev}_{\Lambda}$ is a product of these strata, and so also has a natural simple normal crossings divisor. 

Suppose
\[
\mathsf{Ev}_\Lambda^\dagger\longrightarrow\mathsf{Ev}_\Lambda
\]
is a strata blowup. We also have on evaluation map $\Mbar^{\sf exp}_\Lambda(X|\partial X)\to \mathsf{Ev}_\Lambda$. We now describe a lift that will fit into a commutative diagram:
\[
\begin{tikzcd}
\boxed{?} \arrow{d}\arrow{r}& \mathsf{Ev}_\Lambda^\dagger\arrow{d}\\
\Mbar^{\sf exp}_\Lambda(X|\partial X)\arrow{r}&\mathsf{Ev}_\Lambda
\end{tikzcd}
\]
such that push forward along the left vertical identifies virtual classes. Note that this will \underline{not} be the pullback/total transform. Rather, it behaves more like a \underline{strict} transform. 

This is where the flexibility of having a choice of stack of expansion is useful. Recall that the stack of expansions came with a choice cone complex structure on a certain topological space, and the set of such admissible cone structures is a partially ordered set under refinement. If we refine the stack of expansions, we can still make the mapping stack construction -- every stack of expansions has an associated mapping space. 

\begin{theorem}[Enhanced logarithmic GW theory]
For a fixed blowup $\mathsf{Ev}^\dagger\to \mathsf{Ev}$, any sufficiently refined stack of expansion $\mathsf{Exp}^\dagger(X|\partial X)\to \mathsf{Exp}(X|\partial X)$, the associated moduli stack of expanded stable maps $\Mbar^{\sf exp}_\Lambda(X|\partial X)^\dagger$ fits into a commutative diagram
\[
\begin{tikzcd}
\Mbar^{\sf exp}_\Lambda(X|\partial X)^\dagger \arrow{d}\arrow{r}& \mathsf{Ev}_\Lambda^\dagger\arrow{d}\\
\Mbar^{\sf exp}_\Lambda(X|\partial X)\arrow{r}&\mathsf{Ev}_\Lambda.
\end{tikzcd}
\]
The left vertical map identifies virtual fundamental classes.

Logarithmic GW cycles and invariants are defined in the standard way with insertions in $\mathsf{CH}^\star(\mathsf{Ev}^\dagger_\Lambda)$ or $\mathsf{H}^\star(\mathsf{Ev}^\dagger_\Lambda)$, by pulling back to $\Mbar^{\sf exp}_\Lambda(X|\partial X)^\dagger$ and pairing with the virtual class.
\end{theorem}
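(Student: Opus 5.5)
The plan is to work entirely at the level of tropical and Artin fan data, and then transport the conclusions to the mapping stacks by base change.

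First I construct the horizontal arrow. The evaluation map $\Mbar^{\sf exp}_\Lambda(X|\partial X)\to\mathsf{Ev}_\Lambda$ is logarithmic, so on tropicalizations it induces a map of cone stacks
\[
\mathsf T^{\sf exp}_\Lambda\longrightarrow \Sigma_{\mathsf{Ev}_\Lambda},
\]
where $\Sigma_{\mathsf{Ev}_\Lambda}$ is the cone complex of the snc pair $\mathsf{Ev}_\Lambda$. The blowup $\mathsf{Ev}^\dagger_\Lambda\to\mathsf{Ev}_\Lambda$ corresponds to a subdivision $\Sigma^\dagger\to\Sigma_{\mathsf{Ev}_\Lambda}$. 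I will first show that a cone of $\mathsf T^{\sf exp}_\Lambda$ is built from the parameters of the $1$-complex $\mathcal P$ together with the parameterization $\Gamma\to\mathcal P$. The tropical evaluation at a leg $p_i$ is then the position of the corresponding vertex of $\mathcal P$, which is a linear function of the polyhedral parameters of the expansion. It follows that the preimage of $\Sigma^\dagger$ refines the space of $1$-complexes, and by the closure of admissible structures under refinement there is an admissible structure $\lambda^\dagger$ refining it. Let $\mathsf{Exp}^\dagger$ be the associated stack. Every cone of $\mathsf{Exp}^\dagger$, and hence of the blended $\mathsf T^{\sf exp,\dagger}_\Lambda$, maps into a single cone of $\Sigma^\dagger$. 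The Artin fan construction then gives a morphism $\mathsf{Exp}^\dagger\to a^\star\Sigma^\dagger$ over $a^\star\Sigma_{\mathsf{Ev}_\Lambda}$. Since $\mathsf{Ev}^\dagger_\Lambda=\mathsf{Ev}_\Lambda\times_{a^\star\Sigma}a^\star\Sigma^\dagger$ (strata blowups are pulled back from Artin fans), this produces the top arrow and makes the square commute. It also gives "sufficiently refined" a precise meaning: the refinement must be sharp enough that the evaluation factors cone-by-cone. I would note that the result is a strict transform rather than a total transform, because the mapping space over $\mathsf{Exp}^\dagger$ is built directly and not pulled back.

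Second, I compare virtual classes. The relative perfect obstruction theory of $\Mbar^{\sf exp}_\Lambda\to\mathsf{Exp}$ comes from the logarithmic tangent bundle of the universal target. This bundle pulls back along the refinement, since refined expansions map to coarser ones by strata blowups and contractions that are log étale. I will then show that $\Mbar^{\sf exp}_\Lambda(X|\partial X)^\dagger$ is the fiber product of $\Mbar^{\sf exp}_\Lambda(X|\partial X)$ with a proper birational morphism of the universal-target stacks $\mathfrak M^{\sf exp,\dagger}_\Lambda(\mathcal A^r)\to\mathfrak M^{\sf exp}_\Lambda(\mathcal A^r)$, possibly only up to a further birational modification on the open part. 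Both of these stacks are irreducible of the same dimension, so the fundamental class pushes forward to the fundamental class. Virtual pullback commutes with proper pushforward (Manolache), and this gives the identification of virtual classes. The final clause is then a definition.

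The main obstacle is the fiber-product claim. Refining the expansions changes which components are tubes, and so changes the stability condition. The universal-target map is therefore not literally a base change of $\mathsf{Exp}^\dagger\to\mathsf{Exp}$. I would handle this by mapping both stacks to the cheap space $\mathfrak M^{\sf ch}_\Lambda$ and showing that each is proper and birational over it. This reduces the comparison to a statement about the cheap space, which is independent of choices.
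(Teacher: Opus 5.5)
Your proposal is correct and follows the route the paper indicates; the paper states this theorem without proof and only sketches the mechanism. That route is: refine the admissible polyhedral structure on the space of expansions so the tropical evaluation becomes cone-to-cone into $\Sigma(\mathsf{Ev}_\Lambda)^\dagger$, obtain the lift through Artin fans using $\mathsf{Ev}^\dagger_\Lambda=\mathsf{Ev}_\Lambda\times_{a^\star\Sigma}a^\star\Sigma^\dagger$, and identify virtual classes by realizing both expanded mapping spaces as pullbacks of proper birational morphisms of universal-target stacks over the cheap space, then applying compatibility of virtual pullback with proper pushforward. One small repair: the morphism to $a^\star\Sigma^\dagger$ must be defined on the blended tropical moduli $\mathsf T^{\sf exp,\dagger}_\Lambda$ (which records which unbounded ray each leg lies on), not on $\mathsf{Exp}^\dagger$ itself, because the expansion alone does not determine the evaluation.
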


We refer to the top horizontal arrow in the square above as a \underline{logarithmic lift}. 

A few key points should be highlighted here. \\

\noindent
\underline{Independence of choices.} If we take a further blowup $\mathsf{Exp}(X|\partial X)^\dagger\to \mathsf{Exp}(X|\partial X)^\ddagger$, the associated map
\[
\Mbar^{\sf exp}_\Lambda(X|\partial X)^\ddagger\to \Mbar^{\sf exp}_\Lambda(X|\partial X)^\dagger
\]
also identifies virtual classes. In particular, intersection products of a cohomology class in some blowup $\mathsf{Ev}^\dagger$ of the evaluation space with the virtual class of a compatible mapping space are independent of the choice of mapping space (after appropriate pushforward). \\

\noindent
\underline{Exotic insertions.} First, the blowup
\[
\mathsf{Ev}^\dagger_\Lambda \to \prod_i \mathsf{Ev}_{\Lambda,p_i}
\]
is a blowup of a product but is \underline{not a product of blowups}. A class $\gamma$ in $\mathsf{CH}^\star(\mathsf{Ev}^\dagger)$ that is \underline{not} pulled back along the blowup is called an \underline{exotic cohomology class}, and the corresponding invariants are \underline{exotic log GW invariants}. Because these blowups are ``criss-cross'', the associated logarithmic GW theory (cycles and invariants) are not detected (at least not formally) by the GW theory of strata blowups of $(X|\partial X)$ itself.\\

\noindent
\underline{When have you blown up enough?} By the discussion above, we have created a significantly larger class of logarithmic GW invariants/cycles. We can think of logarithmic GW theory as being operators on a space of insertions, the latter being an appropriate tensor power of the cohomology of the target. 

But since we can now use any cohomology class in any strata blowup of $\mathsf{Ev}_\Lambda$ the ``space of insertions'' is really the following direct limit:
\[
\mathsf{CH}^\star_{\sf log}(X|\partial X) = \varinjlim_{X'\to X} \mathsf{CH}^\star(X'), \ \ X'\to X \textnormal{ a strata blowup},
\]
where the transition maps are given by pullback. 

It is natural to ask whether this is genuinely an infinite amount of data. In fact it isn't. Once $\mathsf{Ev}^\dagger_\Lambda$ is a sufficiently refined blowup it will satisfy the following property. For a blowup $\mathsf{Ev}^\ddagger_\Lambda\to\mathsf{Ev}^\dagger_\Lambda$, a compatible lift
\[
\begin{tikzcd}
\Mbar^{\sf exp}_\Lambda(X|\partial X)^\ddagger \arrow[swap]{d}{p}\arrow{r}& \mathsf{Ev}_\Lambda^\ddagger\arrow{d}{q}\\
\Mbar^{\sf exp}_\Lambda(X|\partial X)^\dagger\arrow{r}&\mathsf{Ev}^\dagger_\Lambda.
\end{tikzcd}
\]
and a cohomology class $\gamma$ on $\mathsf{Ev}^\ddagger_\Lambda$, we have an equality
\[
p_\star\left([\Mbar^{\sf exp}_\Lambda(X|\partial X)^\ddagger]^{\sf vir} \cap \gamma \right) = [\Mbar^{\sf exp}_\Lambda(X|\partial X)^\dagger]^{\sf vir} \cap q_\star \gamma,
\]
where we have slightly abused notation by suppressing the pullbacks to the moduli space. In particular, there are no new invariants after this point. This equality might fail for a \underline{particular} choice of $\mathsf{Ev}_\Lambda^\dagger$, and this often happens for the initial choice of $\mathsf{Ev}_\Lambda$. But after enough blowups, things stabilize.

In fact, there is a purely tropical criterion for when we have blown up sufficiently. The evaluation map (before any blowup)
\[
\mathsf{ev}\colon \Mbar_\Lambda^{\sf exp}(X|\partial X)\to \mathsf{Ev}_\Lambda
\]
has a tropical incarnation. Specifically, we can let $T_\Lambda(\Sigma_X)$ denote the set of maps from tropical curves to the cone complex associated to $(X|\partial X)$\footnote{Earlier in the text we were using $\mathbb R_{\geq 0}^r$ as a proxy for the cone complex $\Sigma_X$. If we put the rays in bijection with the components of $\partial X$, the true cone complex $\Sigma_X$ is the subcomplex spanned by those subsets of rays whose corresponding divisors have nonempty intersection in $X$.}. More precisely, we should take a finite type subcomplex large enough to contain all the combinatorial types of all maps in $\Mbar_\Lambda^{\sf exp}(X|\partial X)$. For example, we can take the subcomplex obtained by imposing degree decorations on the vertices and the balancing condition. 

The evaluation map induces a map of cone complexes
\[
T_\Lambda(\Sigma_X)\to \Sigma(\mathsf{Ev}_\Lambda).
\]
This map can be understood in a very concrete way. By the hypotheses on the tropical map and the contact order matrix, each half-ray corresponding to a marked point $p_i$ maps parallel to one of the rays of $\Sigma_X$. If $c_{ij}$ is positive, then the ray corresponding to $p_i$ maps parallel to the ray $\rho_j$ corresponding to $D_j$. The fan of $D_j$ is the star of the ray $\rho_j$. See Figure~\ref{fig: trop-eval}.
\begin{figure}[h!]
\begin{center}
\includegraphics[scale=0.6]{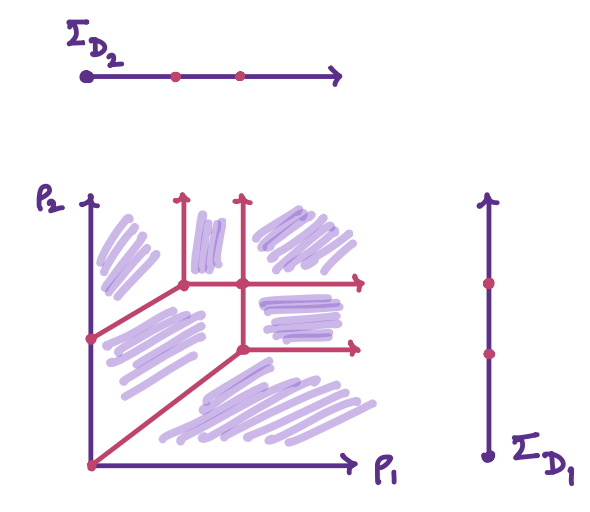}
\end{center}
\caption{The tropicalization of the evaluation map sends a point represented by the tropical map shown to the points on the rays, represented as points on the ray above and to the right in the picture.}\label{fig: trop-eval}
\end{figure}

For the next definition, we remind the reader that the category of cone complexes, together with morphisms between them, admits fiber products, constructed essentially in the same way as common refinements.

\begin{definition}
A subdivision $\Sigma(\mathsf{Ev}_\Lambda)^\dagger\to \Sigma(\mathsf{Ev}_\Lambda)$ is a \underline{flattening} of $T_\Lambda(\Sigma_X)\to \Sigma(\mathsf{Ev}_\Lambda)$ if there exists a subdivision $T_\Lambda(\Sigma_X)^\dagger\to T_\Lambda(\Sigma_X)$ such that the map induced by pullback:
\[
T_\Lambda(\Sigma_X)^\dagger \to \Sigma(\mathsf{Ev}_\Lambda)^\dagger
\] 
has the following property: every cone of the domain maps surjectively onto a cone of the target. 
\end{definition}

Such flattenings always exist by the toroidal semistable reduction theorem, see~\cite{AK00,Mol16}. 

\subsubsection{The statement of the degeneration formula}

We now arrive at the statement of the degeneration formula. Let $\mathcal X \to B$ be a simple normal crossings degeneration with discrete data $\beta$. We consider the geometric and tropical spaces
\[
\Mbar^{\sf exp}_{g,n}(\mathcal X_0,\beta) 
\quad \textnormal{and} \quad 
\mathsf T^{\sf exp}_{g,n}(\RR_{\geq 0}^r/\RR_{\geq 0}) \to \RR_{\geq 0}.
\]
Among the rays of $\mathsf T^{\sf exp}_{g,n}(\RR_{\geq 0}^r/\RR_{\geq 0})$, we single out those that map surjectively to $\RR_{\geq 0}$. Their combinatorial types are precisely the \underline{rigid tropical maps}.

After rigidifying the automorphisms of such a rigid map $\gamma$, we can extract discrete data at each vertex. We denote the automorphism group of $\gamma$ by $\mathsf{Aut}(\gamma)$ and the associated discrete data, to be described below, by $\Lambda_V$:

\begin{enumerate}[(i)]
\item In the expanded setting, a rigid tropical curve includes the data of a $1$-complex of the simplex, and hence possibly a further expansion of $\mathcal X_0$. 
\item Each vertex $V$ of the source graph maps to a vertex of this $1$-complex, and therefore this determines a target pair $(X_V|\partial X_V)$. 
\item The half-edges corresponding to the marked points, together with the edges incident to a given vertex $V$, form the indexing set for the marked points. Since the edges are not labeled, one must choose a labeling to rigidify the combinatorial type; this choice produces an automorphism factor later. 
\item The directions of the edges emanating from each vertex determine a contact order matrix at that vertex. Given an edge $E$, it has a well-defined slope under the map, which we denote by $m_E$; this is the contact order of the edge with the corresponding divisor. 
\end{enumerate}

One more important point that we note is the following. The product of evaluation spaces over all vertices can be written both as a product over vertices and a product over edges:
\[
\prod_V \mathsf{Ev}_{\Lambda_V} = \prod_E \mathsf{Ev}_E^2.
\]
The description on the left is more natural, but the description on the right determines a key cycle, namely the diagonal.

With this understood, we can state the degeneration formula. 

\begin{theorem}[Logarithmic degeneration formula]\label{thm: deg-form}
For each rigid tropical curve and for each vertex in its domain graph, let
\[
\Mbar^{\sf exp}_{\Lambda_V}(X_V|\partial X_V)^\dagger\to \mathsf{Ev}_{\Lambda_V}^\dagger
\]
be a flattening blowup of the evaluation map. Let $\delta_\dagger\colon \Delta^\dagger\hookrightarrow \prod_V \mathsf{Ev}_{\Lambda_V}$ be the strict transform of the diagonal locus. Then we have:
\[
[\Mbar^{\sf exp}_{g,n}(\mathcal X_0,\beta)]^{\sf vir} = \sum_\gamma \; \frac{\prod_E m_E}{\mathsf{Aut}(\gamma)}\; \delta_\dagger^! \left( \prod_V \left[\Mbar^{\sf exp}_{\Lambda_V}(X_V|\partial X_V)^\dagger\right]^{\sf vir} \right),
\]
with the implicit pushforward of the right hand side to $\Mbar^{\sf exp}_{g,n}(\mathcal X_0,\beta)$ under the natural projection. 
\end{theorem}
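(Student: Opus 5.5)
The proof splits into two steps: a decomposition step and a gluing step. Each step is first proved at the level of the universal targets $\mathcal A_B$ and $\mathcal A^r$, and then transported to $\mathcal X$ by virtual pullback along the relative perfect obstruction theories constructed above.

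\emph{Decomposition.} The stack $\mathfrak M^{\sf exp}_{g,n}(\mathcal A_B/B)$ is pure of relative dimension $3g-3+n$ over $B$. Its special fiber is a Cartier divisor, namely the pullback of $0\in B$. Its irreducible components are indexed by the rays of $\mathsf T^{\sf exp}_{g,n}(\RR^r_{\geq 0}/\RR_{\geq 0})$ that surject onto $\RR_{\geq 0}$, i.e.\ by rigid tropical maps $\gamma$. The multiplicity of the component indexed by $\gamma$ in the divisor is the index of the ray map; a local toric computation at the nodes, as in the proof of predeformability, identifies this index with $\prod_E m_E$ divided by the lattice index of the rigid cone. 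We therefore have an equality of cycles
\[
[\mathfrak M^{\sf exp}_{g,n}(\mathcal A_{B,0})] = \sum_\gamma \mathrm{mult}_\gamma\,[\mathfrak M_\gamma].
\]
Virtual pullback along $\Mbar^{\sf exp}_{g,n}(\mathcal X/B,\beta)\to \mathfrak M^{\sf exp}_{g,n}(\mathcal A_B/B)$ commutes with Gysin pullback to the special fiber (Manolache). This expresses $[\Mbar^{\sf exp}_{g,n}(\mathcal X_0,\beta)]^{\sf vir}$ as the sum of the virtual classes $[\Mbar_\gamma]^{\sf vir}$ weighted by $\mathrm{mult}_\gamma$.

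\emph{Gluing.} Fix $\gamma$ and label its edges; this labelling is where the factor $1/\mathsf{Aut}(\gamma)$ comes from. Cutting every curve along the nodes indexed by the edges gives a morphism
\[
\Mbar_\gamma \to \prod_V \Mbar^{\sf exp}_{\Lambda_V}(X_V|\partial X_V)^\dagger ,
\]
and I want to identify $\Mbar_\gamma$ with the fiber product of the right hand side over $\prod_E \mathsf{Ev}_E^2$ with $\Delta^\dagger$, computed in the expanded sense, i.e.\ with strict transforms rather than total transforms. On the obstruction side this comes from the normalization sequence
\[
0\to \mathcal O_C \to \textstyle\bigoplus_V \mathcal O_{C_V}\to \bigoplus_E \mathcal O_{q_E}\to 0
\]
tensored with $f^\star T^{\sf log}$. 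It shows that the obstruction theory of $\Mbar_\gamma$ relative to $\mathfrak M_\gamma$ is the product of the vertex obstruction theories, corrected by the normal bundle of $\Delta$. Functoriality of virtual pullback then gives
\[
[\Mbar_\gamma]^{\sf vir} = c\cdot \delta_\dagger^!\prod_V[\Mbar^{\sf exp}_{\Lambda_V}]^{\sf vir}
\]
for some constant $c$. This reduces everything to a statement about the universal targets: the component $\mathfrak M_\gamma$, with its reduced structure, should equal the strict-transform-diagonal fiber product of the vertex stacks $\mathfrak M^{\sf exp}_{\Lambda_V}(\mathcal A|\partial\mathcal A)^\dagger$, and the constant $c$ should be determined by lattice indices.

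\emph{Main obstacle.} The hard step is this comparison at the level of universal targets. The naive diagonal pullback is excess: the evaluation map is not transverse to the stratification, so the false formula has the true space as a proper closed subspace. My plan is to use the flattening hypothesis. After subdividing, every cone of $T^\dagger_{\Lambda_V}$ maps onto a cone of $\Sigma(\mathsf{Ev}^\dagger)$, which makes the evaluation map of Artin fans flat and combinatorially equidimensional. Fiber products of cone complexes then compute fiber products of Artin fans with the expected dimension, and the fs fiber product of the vertex stacks over the strict-transform diagonal has no excess components. The tropical fiber product should then identify with the cone of $\gamma$ in $\mathsf T^{\sf exp}$, refined by the chosen stack of expansions; here the freedom to refine the stack of expansions is essential. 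The remaining bookkeeping is the lattice index of the fiber product of cones. Computed edge by edge, where each edge contributes the gcd-type index $m_E$, it should cancel against $\mathrm{mult}_\gamma$ and leave exactly the coefficient $\prod_E m_E/\mathsf{Aut}(\gamma)$. I expect this multiplicity check, together with showing that no extra components survive in the fs fiber product, to be where the real work lies.
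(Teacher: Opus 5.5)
Your architecture matches the route the paper sketches and defers to~\cite{MR23}. First comes an ACGS-style decomposition of the special-fibre class over rigid tropical maps, weighted by the index of the ray. Then a gluing step removes the excess of the naive diagonal pullback by flattening the vertex evaluation maps, passing to the strict transform $\Delta^\dagger$, and refining the stack of expansions.

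\textbf{The coefficient bookkeeping fails as written.} The index $m_\gamma$ of the ray of $\gamma$ over $\RR_{\geq 0}$ is fixed by simultaneous integrality of all edge lengths \emph{and} of the vertex positions of the $1$-complex. It is not obtained by a local toric computation at individual nodes. In the snc expanded setting it need not divide $\prod_E m_E$.
\begin{itemize}
\item For example, take $r=3$ and a vertex at the barycentre of the triangle, joined by slope-one edges (directions $(2,-1,-1)$, $(-1,2,-1)$, $(-1,-1,2)$) to vertices at the three corners.
\item At height $\ell$ the central vertex sits at $\tfrac{\ell}{3}(1,1,1)$ and each edge has length $\ell/3$. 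So $m_\gamma=3$ while $\prod_E m_E=1$.
\item The gluing step must therefore supply a factor $1/3$, which no edge-by-edge lattice index can produce.
\end{itemize}
I expect this factor to come from the stabiliser $\mu_3$ that $\mathsf{Exp}_{\mathcal X/B}$ has along this stratum, since the ramified base change is built into the Artin fan construction over $B$. This $\mu_3$ acts nontrivially on the base-changed central component, as the deck group of its degree-$3$ map to the exceptional component over the triple locus.

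For the same reason, your cutting morphism $\Mbar_\gamma\to\prod_V\Mbar^{\sf exp}_{\Lambda_V}(X_V|\partial X_V)^\dagger$ is not defined in general. Over a family in $\Mbar_\gamma$, the vertex components of the expansion are only twisted forms of $X_V$. You should instead work with the gluing map from the fibre product over $\Delta^\dagger$ to $\Mbar_\gamma$, or a correspondence through a cover of $\Mbar_\gamma$. The factor $\prod_E m_E/m_\gamma$ then has to be extracted jointly from two sources: the logarithmic gluing choices at the nodes (which give $\prod_E m_E/\mathrm{lcm}$ in the double point case) and these expansion stabilisers.

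\textbf{The central identification is asserted, not proved.} Identifying $\Mbar_\gamma$ with the fibre product over $\Delta^\dagger$ is the real content beyond the decomposition step. Your mechanism for it is the right one:
\begin{itemize}
\item flattening makes the evaluation maps integral (saturation needs separate care), so logarithmic fibre products over the logarithmic strict transform of the diagonal have the scheme-theoretic fibre product as underlying space;
\item refining the global stack of expansions compatibly with the vertex refinements is harmless, since refinements preserve virtual classes.
\end{itemize}

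\textbf{Your obstruction-theory sketch also needs adjusting.} Two dimension counts are off by $|E|$:
\begin{itemize}
\item $\dim\mathfrak M_\gamma=\sum_V\dim\mathfrak M_V+|E|$;
\item each node term $f^\star T^{\sf log}_{\mathcal X/B}|_{q_E}$ has rank one more than $N_{\Delta_E}$.
\end{itemize}
These discrepancies cancel only if the comparison runs through the map $\mathfrak M_\gamma\to\prod_V\mathfrak M_V$. They do not cancel if, as you wrote, the obstruction theory relative to $\mathfrak M_\gamma$ is taken to be the product theory corrected by $N_\Delta$ alone.
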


In Betti cohomology, the diagonal $[\Delta^\dagger]\in \prod_V \mathsf{Ev}_{\Lambda_V}^\dagger$ admits a K\"unneth decomposition. The K\"unneth factors are naturally \underline{exotic} cohomology classes on $\mathsf{Ev}_{\Lambda_V}^\dagger$, so we can write
\[
[\Delta^\dagger] = \sum_j \otimes_V \delta_V^{(j)}\in \bigotimes H^\star(\mathsf{Ev}^\dagger_{\Lambda_V}).
\]
In particular, the result gives an explicit \underline{numerical} formula -- an explicit form is stated in~\cite[Section~8]{MR23}\footnote{Note that there is a typographical error in the formula stated there: the alternating sign should be removed.}. 

\begin{remark}[Complexity and special cases]
The part of the formula that may appear mysterious is the construction of the flattening blowup. But we emphasize that it is a question entirely about understanding the moduli space of tropical curves and its evaluation map in detail. While there may be other formalisms to approach this question, the combinatorics involved appears to be an inherent complexity in the problem. 

In some cases, the complexity of the flattening goes away. For example, of $\mathcal X_0$ has at worst triple points (e.g. in dimension $2$) and when the genus is $0$, or at least all dual graphs are $0$, the flattening blowup step is essentially trivial, see e.g.~\cite[Section~6]{R19}. See also~\cite{Bou17,KHSUK} for some remarkable uses of this special case. 
\end{remark}

\begin{corollary}[General fiber invariants from log invariants]
Given $\mathcal X\to B$, the GW invariants of $\mathcal X_\eta$ with insertions pulled back from the cohomology of $\mathcal X$ can be expressed as a linear combination of \underline{exotic} logarithmic GW invariants of the pairs $(X_i|\partial X_i)$, where the $X_i$ range over the components of the degenerations determined by all rigid tropical curves. 
\end{corollary}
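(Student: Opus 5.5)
The plan has four steps: deform to the special fiber, apply the degeneration formula, pass to Künneth components of the diagonal, and then translate the resulting vertex integrals back into logarithmic invariants.

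\emph{Step 1: reduce to the special fiber.} Let $\gamma_1,\ldots,\gamma_n$ be insertions in $\mathsf H^\star(\mathcal X)$ and set $\gamma_{i,\eta}$ and $\gamma_{i,0}$ for their restrictions to the general and special fibers. The GW invariant of $\mathcal X_\eta$ is the degree of $\prod_i \mathsf{ev}_i^\star\gamma_{i,\eta}$ against $[\Mbar_{g,n}(\mathcal X_\eta,\beta)]^{\sf vir}$. The evaluation maps extend over the whole family $\Mbar^{\sf exp}_{g,n}(\mathcal X/B,\beta)\to B$, which is proper. The virtual classes of the fibers are the Gysin restrictions of a single family class, and pushforward to $\Mbar^{\sf ch}$ identifies them with the specialized virtual class. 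By conservation of number, the invariant therefore equals
\[
\deg\Big(\prod_i \mathsf{ev}_i^\star\gamma_{i,0}\cap[\Mbar^{\sf exp}_{g,n}(\mathcal X_0,\beta)]^{\sf vir}\Big).
\]

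\emph{Step 2: apply the degeneration formula.} Substitute Theorem~\ref{thm: deg-form} for $[\Mbar^{\sf exp}_{g,n}(\mathcal X_0,\beta)]^{\sf vir}$. This gives a sum over rigid tropical maps $\gamma$, each with coefficient $\prod_E m_E/|\mathsf{Aut}(\gamma)|$, of degrees of
\[
\delta_\dagger^!\Big(\prod_V[\Mbar^{\sf exp}_{\Lambda_V}(X_V|\partial X_V)^\dagger]^{\sf vir}\Big)
\]
capped with the evaluation pullbacks. Each marked point $p_i$ is attached to exactly one vertex $V$, and the $i$-th evaluation factors through the map $X_V\to\mathcal X_0\to\mathcal X$. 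So $\mathsf{ev}_i^\star\gamma_{i,0}$ pulls back to the class $\gamma_i|_{X_V}$, inserted at the corresponding leg of the vertex moduli space. Because the boundedness results of~\cite{GS13,NS06} leave only finitely many rigid $\gamma$ with nonempty moduli, the sum is finite.

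\emph{Step 3: replace the Gysin map by the Künneth decomposition.} This is the step I expect to be the main obstacle. The refined Gysin map $\delta_\dagger^!$ must be rewritten as capping with the class $[\Delta^\dagger]$ pulled back along the product evaluation map $\prod_V\Mbar_{\Lambda_V}^{\sf exp,\dagger}\to\prod_V\mathsf{Ev}_{\Lambda_V}^\dagger$. There are two points to settle.
\begin{itemize}
\item The strict transform $\Delta^\dagger$ must sit inside $\prod_V\mathsf{Ev}^\dagger_{\Lambda_V}$ as a regularly embedded (or at least lci) cycle whose fundamental class is the relevant one. I would arrange this by taking the flattening fine enough that $\Delta^\dagger$ is a smooth toroidal subvariety.
\item Degrees then have to be computed in Betti cohomology, where $\delta^!$ agrees with cap product by the Poincaré dual of $[\Delta^\dagger]$ on a product of smooth projective varieties. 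After refining, the $\mathsf{Ev}^\dagger_{\Lambda_V}$ are smooth projective strata blowups, so this applies.
\end{itemize}
With both in place, the Künneth decomposition $[\Delta^\dagger]=\sum_j\otimes_V\delta_V^{(j)}$ turns each $\gamma$-term into
\[
\sum_j\prod_V \deg\Big(\delta_V^{(j)}\prod_{i\to V}\mathsf{ev}_i^\star\gamma_i\cap[\Mbar^{\sf exp}_{\Lambda_V}(X_V|\partial X_V)^\dagger]^{\sf vir}\Big).
\]
The degree factors over vertices because the virtual class of the product is the product of the virtual classes.

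\emph{Step 4: identify the vertex factors.} By the Enhanced logarithmic GW theory theorem, each vertex factor is by definition an exotic logarithmic GW invariant of $(X_V|\partial X_V)$. Its insertions are the classes in $\mathsf H^\star(\mathsf{Ev}^\dagger_{\Lambda_V})$ given by $\delta_V^{(j)}$ multiplied by the pulled-back $\gamma_i$. The same theorem shows these invariants do not depend on the chosen refinement. Finally, the $X_V$ are exactly the components of the expansions determined by the rigid tropical curves, which gives the stated linear combination.
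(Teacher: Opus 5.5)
Your proposal is correct and is essentially the argument the paper has in mind. You reduce to the special-fiber virtual class, apply Theorem~\ref{thm: deg-form}, and K\"unneth-decompose $[\Delta^\dagger]$ in $\bigotimes_V \mathsf H^\star(\mathsf{Ev}^\dagger_{\Lambda_V})$, so that each rigid $\gamma$ contributes a sum of products over vertices of log invariants with exotic insertions $\delta_V^{(j)}$. One simplification: in Step~3 you need not arrange $\Delta^\dagger$ to be smooth, since once the ambient product $\prod_V \mathsf{Ev}^\dagger_{\Lambda_V}$ is smooth, the Gysin map $\delta_\dagger^!$ already pushes forward to capping with the pullback of $[\Delta^\dagger]$.
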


We discuss refinements of this result in the applications section. But we flag one of them -- exotic invariants can always be re-expressed in terms of non-exotic invariants and correction terms that are ``lower order''. If one is willing to forego some explicitness, in fact the exotic insertions can be removed. 

\section{Examples}

In light of the degeneration formula, the space of logarithmic maps to $\mathcal X_0$ is built out of the space of maps to pairs $(X|\partial X)$. We now share a series of examples of such mapping spaces. These are the examples that have helped the author gain a footing in these topics, and the hope is that they will do the same for the reader. 

\subsection{One parameter subgroups}
This example is the subject of beautiful paper by Chen--Satriano~\cite{CS12}. Let $(X|\partial X)$ be a pair consisting of a smooth toric variety and its torus invariant boundary divisor, and say $T$ is the dense torus. Let $v$ a vector in the cocharacter lattice of the torus. It determines a $1$-parameter subgroup $H_v$ and so a map $\mathbb G_m\to X$ that extends to a map of pairs:
\[
\varphi_v\colon(\mathbb P^1|0,\infty)\to (X|\partial X).
\]
Since the pullback of $\partial X$ is $\{0,\infty\}$ this is a logarithmic map. Let $\Lambda$ be the discrete data of this map. Composition with the $T$-action gives more logarithmic maps, but since maps are considered up to automorphism, the subtorus $H_v$ acts trivially. This gives an embedding
\[
T/H_v\hookrightarrow \Mbar^{\sf log}_\Lambda(X|\partial X).
\]
By a deformation theory argument, one sees that $\Mbar^{\sf log}_\Lambda(X|\partial X)$ is logarithmically smooth of the expected dimension, which is $\dim X-1$. The locus where the logarithmic structure is trivial is dense. In fact, any such map differs from $\varphi_v$ by the action of $T$, so $T/H_v$ is dense and open. 

Chen and Satriano show that $\Mbar^{\sf log}_\Lambda(X|\partial X)$ is itself a toric variety, and coincides with the famous \underline{Chow quotient} construction. In particular, there is a completely elementary procedure to describe the fan of $\Mbar^{\sf log}_\Lambda(X|\partial X)$ starting with $\Sigma_X$ and the vector $v$. See~\cite{CS12,KSZ91} as well as~\cite{AM14} for a higher dimensional generalization. 

\subsection{Rational curves in toric varieties}

The next simplest examples concern rational curves in toric varieties with more general tangency. The following is adapted from~\cite{R15b,RW19}. 

As before, let $(X|\partial X)$ be a pair consisting of a smooth toric variety and its torus invariant boundary divisor, and say $T$ is the dense torus. Fix $g = 0$ and a contact order matrix $[c_{ij}]$. Note that this determines the curve class. We describe $\Mbar_\Lambda(X|\partial X)$. Again, the space is unobstructed of the expected dimension; let $\mathsf M_\Lambda(X|\partial X)$ denote the locus where the curve is smooth and maps generically to $T$. On this locus, if $C$ denotes the domain curve, the rational map
\[
C\dashrightarrow T
\]
has fixed orders of zeroes and poles at the $p_i$ and is defined away from here. A rational function all of whose zeros and poles (and their orders) are known is unique up to scalar. This means that
\[
\mathsf M_\Lambda(X|\partial X)\to \mathsf M_{0,n}
\]
is a torsor under $T$. Since the Picard group of $\mathsf M_{0,n}$ is trivial (e.g. by excision) this means that
\[
\mathsf M_\Lambda(X|\partial X)\cong \mathsf M_{0,n} \times T.
\]
Note if one of the $p_i$ has trivial tangency order, one can use the evaluation map to produce a trivialization of the torsor.  

Over the boundary, this identification degenerates, but nevertheless, there is an isomorphism
\[
\Mbar^{\sf log}_\Lambda(X|\partial X)\cong (\Mbar_{0,n} \times X)^\dagger
\]
with a blowup of a product. The blowup is specified by an explicit combinatorial refinement of the cone complex associated to $\Mbar_{0,n} \times X$. 

A pleasant special case of the above example is when the toric variety is equal to $\mathbb P^r$ and the degree is equal to $1$. In this case, one gets an interesting birational model of the Grassmannian $Gr(2,r+1)$, closely related to the geometry of Chow quotients~\cite{Kap93}. 

\begin{remark}[Higher genus and double ramification]
The essential point above is that degree $0$ divisors with prescribed zeroes and poles (given by the contact orders) are principal and admit a unique section up to scalar. In higher genus, one may still consider those divisors that are trivial. Over the moduli space of smooth curves, this condition can be expressed as the intersection of two copies of $\mathsf M_{g,n}$ inside the universal Picard stack: one given by the Abel--Jacobi section determined by the contact orders, and the other by the zero section. 

For nodal curves, one can attempt to extend this picture using either compactified Jacobians or the logarithmic Picard group~\cite{AP21,Hol17,MW17,MW18}. This leads to an approach to higher genus logarithmic GW theory for toric pairs and to a connection with \underline{higher double ramification cycles}; see~\cite{HMPPS,JPPZ,MR21,RUK22}. 
\end{remark}

\subsection{Relative product formulas}

The following example comes from the thesis of Nabijou~\cite{Nab19} and the article~\cite{NR19}. 

Let $H_1,H_2\subset\mathbb P^r$ be two distinct hyperplanes. Fix $g = 0$ and degree $d$. Let $[c_{ij}]$ be the $n\times 2$ matrix of contact orders -- the row sums must be $d$. We can consider three pairs:
\[
(\mathbb P^r|H_1), \; (\mathbb P^r|H_2), \; (\mathbb P^r|H_1+H_2).
\]
Let $\Lambda_1,\Lambda_2,\Lambda$ be the three discrete data sets, with contact orders $[c_{i1}]$, $[c_{i2}]$, and $[c_{ij}]$. There is a commutative diagram of spaces:
\[
\begin{tikzcd}
\Mbar^{\sf log}_\Lambda(\mathbb P^r|H_1+H_2)\arrow{d}\arrow{r} & \Mbar^{\sf log}_\Lambda(\mathbb P^r|H_1)\arrow{d}\\
\Mbar^{\sf log}_\Lambda(\mathbb P^r|H_2)\arrow{r} & \Mbar_{0,n}(\mathbb P^r,d). 
\end{tikzcd}
\] 
Note that each of these spaces is irreducible of the expected dimension -- this follows from deformation theory and explicit parameterizations of the locus of non-degenerate maps in each case. However, Nabijou observes that, if we let $\overline{\mathsf{N}}^{\sf log}_\Lambda(\mathbb P^r|H_1+H_2)$ denote the fiber product, the map
\[
\Mbar^{\sf log}_\Lambda(\mathbb P^r|H_1+H_2)\to\overline{\mathsf{N}}^{\sf log}_\Lambda(\mathbb P^r|H_1+H_2)
\]
typically \underline{fails} to be an isomorphism. The failure is almost immediate, as shown in~\cite[Section~2]{NR19}. Take $r= 2$, $n = 2$ (so two marked points) and degree $d = 2$. Choose the tangency order with $H_i$ to be $2$ at $p_i$. The locus of maps onto smooth conics is $3$-dimensional, and its closure is a component of $\overline{\mathsf{N}}^{\sf log}_\Lambda(\mathbb P^r|H_1+H_2)$. Another $3$-dimensional component parameterizes maps
\[
C_0\cup C_1\cup C_2\to \PP^2
\]
where $C_0$ carries the marked points, is contracted to $H_1\cap H_2$, and meets $C_1$ and $C_2$ at distinct points. 

In this situation, the logarithmic space picks out one of the components of the intersection. The underlying geometry is actually very simple. On the locus of maps from smooth domains, each space $\mathsf M_\Lambda(\mathbb P^r|H_i)$ embeds in $\Mbar_{0,n}(\mathbb P^r,d)$. The true logarithmic space is the (normalization of the) closure of their intersection. The naive space is the fiber product of (the normalizations of) their closures.

Of course, the two theories can be compared -- by judicious use of blowups and excess intersection theory, one can express the difference between them in terms of natural tautological classes. The smooth pair theory $(\mathbb P^r|H)$ is understood through beautiful work of Gathmann~\cite{Gat02} and Vakil~\cite{Vak00}, and this leads to a method for computing logarithmic GW invariants in this case.

The example above is related to an interesting link between logarithmic structures and orbifold Gromov--Witten theory, see~\cite{ACW,BNR22,TY23}. For a discussion of similar phenomena from the perspective of logarithmic intersection theory, see~\cite{Herr}.

\subsection{Conics through points via tropical geometry}

A great deal of intuition for how the degeneration formula should work comes from the tropical correspondence theorems~\cite{Mi03,NS06}, which treat the case when $X$ is toric and $\partial X$ is the full toric boundary. We explain this, in the language developed here, in the simplest nontrivial case -- counting conics through $5$ points. 

The setup is slightly different from what we have considered so far. Specifically, we have not yet considered degenerations $\mathcal X\to B$ whose general fiber is itself a pair $(\mathcal X_\eta|\partial \mathcal X_\eta)$, but both the theory and the degeneration formula extend to this case. 

To set this up, we start with the fan $\Sigma_{\mathbb P^2|\partial \mathbb P^2}$ of the projective plane, with cocharacter lattice $N$. Choose $5$ generic integer points in $N$ and let $\mathcal P$ be any polyhedral decomposition with integer vertices satisfying the following three properties: (i) the $5$ points are vertices of $\mathcal P$, (ii) the unbounded rays of $\mathcal P$ are each parallel to one of the rays of $\Sigma_{\mathbb P^2|\partial \mathbb P^2}$, and (iii) the cone over $\mathcal P$ is a smooth fan. This produces a degeneration of $\mathbb P^2$ over $\mathbb A^1$. The general fiber is $(\mathbb P^2|\partial \mathbb P^2)$. The special fiber has $5$ distinguished components, chosen in advance. 

The constructions discussed above can be adapted to produce a degeneration $\Mbar_{0,11}^{\sf exp}(\mathcal X/B,2)$. The general fiber is the space of conics mapping to $(\mathbb P^2|\partial \mathbb P^2)$ -- among the $11$ marked points, $6$ record the intersections with the toric boundary (two for each boundary divisor), and $5$ have tangency order $0$. 

We now impose the $5$ point conditions. To do this, we choose fiberwise point conditions, i.e.\ sections of $\mathcal X\to B$. We choose sections $s_1,\ldots, s_5$ whose special fibers are generic points on the distinguished components. We then consider the moduli space of maps $\Mbar_{0,11}^{\sf exp}(\mathcal X/B,2)^{\sf pts}$ that pass through these $5$ points in the fibers. Of course, we know that the general fiber consists of a single point (up to labeling of the marked points), but let us ignore this for the moment. 

The special fiber has components indexed by tropical curves, and straightforward bookkeeping shows that these tropical curves must pass through the $5$ chosen distinguished points. A purely combinatorial argument, involving the balancing condition, reveals that there is in fact a single relevant tropical curve, shown below in Figure~\ref{fig: conic}.
\begin{figure}[h!]
\begin{center}
\includegraphics[scale=0.4]{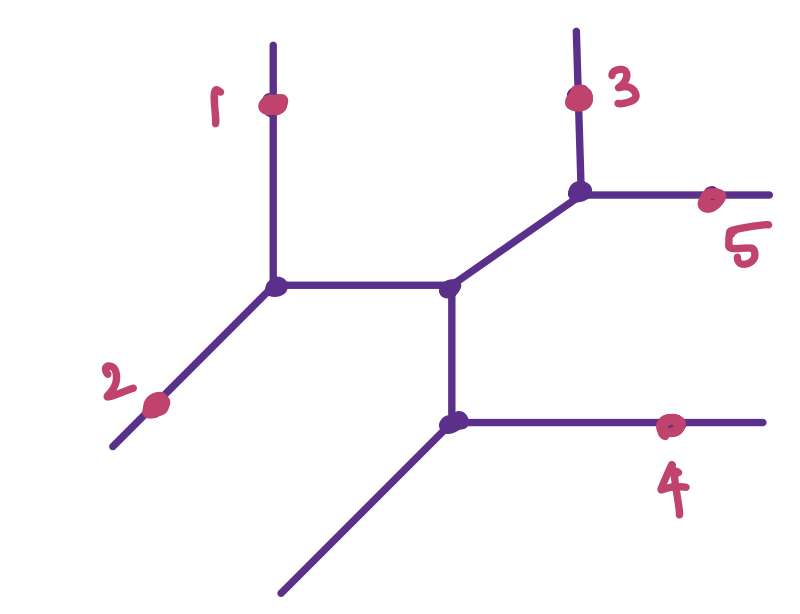}
\end{center}
\caption{A tropical conic, encoding a degeneration of $\mathbb P^2$.}\label{fig: conic}
\end{figure}
Each rigid tropical curve determines a moduli space, and in this case these moduli spaces are of the type described above -- rational curves in toric surfaces. The possibilities are either lines in $\mathbb P^2$ or a fiber class in $\mathbb P^1\times\mathbb P^1$. The degeneration formula now implies (in a rather complicated way) that there is a conic through the $5$ points. 

One can attempt to implement this strategy more generally, leading to the famous \underline{tropical correspondence theorems} of Mikhalkin~\cite{Mi03} and Nishinou--Siebert~\cite{NS06}. The former treats toric surfaces with point conditions in arbitrary genus; the geometric idea is essentially the same, but carrying it out requires a much more delicate analysis of multiplicities. The latter treats rational curves in toric varieties, again with additional work but in a similar spirit to what we have suggested above. There are further generalizations of these ideas -- to descendant invariants~\cite{MR16}, refined curve counts~\cite{Bou17}, and all genus counts in threefolds~\cite{MR26,Par17}.

\section{GW theory and tautological classes}

The logarithmic degeneration formula serves as a starting point for several reconstruction results in GW theory, developed in~\cite{MR25}. We briefly discuss some of these results.

\subsection{Logarithmic/absolute}

We have introduced a ``new'' Gromov--Witten theory for pairs $(X|\partial X)$. A natural question is to what extent the resulting invariants and cycles (i.e.\ their pushforwards to $\Mbar_{g,n}$) are genuinely new.

The following result is proved in~\cite{MR25}. 

\begin{theorem}[Logarithmic in terms of absolute]
Let $(X|\partial X)$ be a simple normal crossings pair. The cohomological logarithmic GW cycles (resp.\ invariants), possibly with exotic insertions, of $(X|\partial X)$ are determined by the ordinary GW cycles (resp.\ invariants) of its strata. 
\end{theorem}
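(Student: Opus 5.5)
Our plan is to use degeneration to the normal cone, run the logarithmic degeneration formula (Theorem~\ref{thm: deg-form}) backwards, and induct. Consider the deformation to the normal cone of a boundary component $D\subset\partial X$, or more precisely the degeneration $\mathcal X\to\mathbb A^1$ obtained by blowing up $D\times 0$ in $X\times\mathbb A^1$. It is an snc degeneration of pairs. Its general fiber is $(X|\partial X)$, or $(X|\partial X - D)$ if $D$ is dropped from the boundary of the general fiber. Its special fiber is $X\cup \mathbb P(N_D\oplus\mathcal O)$, glued along $D$. The degeneration formula, extended to degenerations of pairs as indicated in the conics example, expresses the cycles of the general fiber as a sum over rigid tropical maps $\gamma$. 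Each summand is a gluing, via $\delta_\dagger^!$ against the strict transform of the diagonal, of logarithmic cycles of $(X|\partial X)$ and of the projective bundle over $D$. We induct on the number of components of $\partial X$, on $\dim X$, and on the curve class and genus, ordered by a suitable partial order.

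\textbf{Step 1: isolate a leading term.} In the formula for $(X|\partial X - D)$, that is, for the theory with one fewer boundary divisor, isolate the distinguished rigid tropical types. In these types all the curve lies on the $X$ side, with prescribed contact along $D$ relative to the bubble, and the bubble contains only trivial tube-type pieces. These terms reproduce the $(X|\partial X)$ invariant with the given insertions, up to an invertible multiplicity $\prod_E m_E$. Every other term involves either strictly smaller discrete data on the $X$ side (smaller curve class, genus, or number of contacts) or logarithmic cycles of the projective bundle $\mathbb P(N_D\oplus\mathcal O)$ over $D$ relative to its snc boundary. Varying the insertions, in particular the classes pulled back from $\mathsf{Ev}$ that cut the diagonal, lets us solve a triangular system for the $(X|\partial X)$ cycles with arbitrary contact data. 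This follows the classical Maulik--Pandharipande approach in the smooth divisor case.

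\textbf{Step 2: handle the bubble terms.} The bubble terms are logarithmic theories of a $\mathbb P^1$-bundle over $D$, relative to its zero section, infinity section, and the pullback of $\partial X|_D$. The latter has strictly fewer ``non-fiber'' boundary directions over a lower-dimensional base. We treat these by a localization argument for the fiberwise $\mathbb G_m$ action, which makes sense because the logarithmic and expanded spaces are equivariant. This reduces the bubble terms to rubber and relative invariants of $D$, and hence, by induction, to absolute invariants of the strata of $\partial X$ (the strata of $D$ with its induced boundary), together with Hodge and $\psi$ integrals. The latter are tautological, so they are handled by the absolute theory of the strata. Iterating over the components of $\partial X$ eventually reaches the absolute theory of $X$ and of all strata.

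\textbf{Step 3 and the main obstacle: exotic insertions.} Exotic insertions are classes on a blowup $\mathsf{Ev}^\dagger$ not pulled back from $\mathsf{Ev}$. To reduce them to non-exotic ones, we use the stabilization property: after sufficient blowup, $q_\star$ commutes with the virtual class. Combined with the fact that $\mathsf{CH}^\star$ of a strata blowup is generated over $\mathsf{CH}^\star(\mathsf{Ev})$ by exceptional classes supported on strata, this writes the exotic terms as non-exotic invariants with lower-order corrections supported on boundary strata. Those corrections are again logarithmic theories of strata, so they are covered by induction. We expect this step, and more generally showing that the triangularity in Step~1 really holds, to be the main difficulty. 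The non-product, criss-cross blowups in the flattening mix contacts across different markings, so we must check that the leading term in the degeneration formula is invertible and that every correction is strictly smaller in a well-founded order. We would first attempt this tropically, by analysing which rigid types survive the flattening and comparing their multiplicities.
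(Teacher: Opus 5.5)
Your outline has the same architecture as the paper's sketch: a Maulik--Pandharipande style degeneration to the normal cone with a triangular inversion, a reduction of the bubble theories to their bases, and a procedure that removes exotic insertions. However, it supplies neither of the two ingredients the paper names. These are the log GW cycles of projectivized split vector bundles in terms of the base, and the exotic-to-non-exotic algorithm. The mechanism you propose for the second one also does not do what you need.

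The stabilization property only compares two sufficiently refined blowups $\mathsf{Ev}^\ddagger\to\mathsf{Ev}^\dagger$. The paper notes that the pushforward identity often fails for the initial $\mathsf{Ev}_\Lambda$. So stabilization bounds how exotic an insertion needs to be, but it never descends to non-exotic ones. Writing a class on $\mathsf{Ev}^\dagger$ as a pullback plus classes supported on exceptional strata is fine. Evaluating the latter, though, requires a splitting formula for the virtual class of $\Mbar^{\sf exp}_\Lambda(X|\partial X)^\dagger$ over the preimage of each exceptional stratum, as a sum of glued theories indexed by tropical types mapping onto the corresponding cone. In those glued terms the vertex at the origin of $\Sigma_X$ can keep the full genus and curve class. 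For example, two markings with contact along $D_1$ and $D_2$ can lie on a fiber-class rational component over $D_1\cap D_2$, joined to the origin vertex by a single edge, so the origin vertex acquires one merged contact along $D_1\cap D_2$. Types like this are what force the criss-cross blowups. Finding a well-founded order that decreases along them, with invertible leading coefficients, is the actual content of the algorithm, not a check to be done afterwards. You need it before Step~1 makes sense: the K\"unneth factors of $[\Delta^\dagger]$ are exotic, so the unknowns in your triangular system are exotic invariants of $(X|\partial X)$.

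Step~2 is too narrow as well. For the degeneration of $(X|\partial X-D)$ to the normal cone of $D$, the height-one slice is $\{x\in\Sigma_X : x_D\le 1\}$. Rigid tropical maps can have vertices in the interiors of its higher-dimensional cells, at points $a\,u_D+\sum_{j\in J}b_ju_j$ with $0<a<1$ and $b_j>0$, where $u_D,u_j$ generate the rays $\rho_D,\rho_j$. The corresponding components are toric bundles over the deeper strata $D\cap D_J$, relative to their full fiberwise toric boundary; up to toric blowup, they are projectivizations of split bundles built from $N_D$ and the $N_{D_j}$. These are what the paper's first ingredient controls, and localization for the fiberwise $\mathbb G_m$ on $\mathbb P(N_D\oplus\mathcal O)$ does not reach them.

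Finally, your leading term is misdescribed. Since $D$ is not boundary on the general fiber, the bubble is relative to $D_0$ but not $D_\infty$, which contradicts what you write in Step~2. Its fiber pieces are therefore not tube covers. A piece of contact order $m\ge 2$ moves in an $(m-1)$-dimensional family over each point of $D$ that primary insertions cannot cut down. So in the leading term, insertions pulled back from $X$ only detect contact order one. To reach higher contact you need, as Maulik--Pandharipande do, descendant insertions $\tau_{m-1}$ of classes supported on $D$, which specialize into the bubble. The leading coefficient is then $\prod_E m_E$ times the resulting nonzero one-point bubble invariants.
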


The proof relies on several ingredients, including a study of the logarithmic GW cycles of projectivized split vector bundles in terms of the cycles of the base, and an algorithm expressing exotic insertions in terms of non-exotic ones. In the smooth pair case, the result reduces to ~\cite{MP06}. See also the reconstruction result in~\cite{BNR22}. 

One interpretation of the theorem is that logarithmic theory introduces no new unknowns into GW theory. Rather, the degeneration formula produces new relations among the existing ones; taken together, this makes it an effective computational tool.

\subsection{GW cycles in cohomology}

Recall that the cohomology ring of the moduli space of curves $\mathsf H^\star(\Mbar_{g,n};\QQ)$ contains a distinguished subring $\mathsf{RH}^\star(\Mbar_{g,n})$, the \underline{tautological ring}. It is the smallest system of subrings, defined simultaneously for all $(g,n)$, that is closed under pushforward along forgetful and gluing morphisms. 

\begin{theorem}[General fiber in terms of special fiber]
Let $\mathcal X\to B$ be a simple normal crossings degeneration. Let $\mathcal X_b$ be a smooth fiber and $\mathcal X_0$ the special fiber. The cohomological GW cycles of $\mathcal X_b$, with arbitrary insertions, lie in the $\mathsf{RH}^\star(\Mbar_{g,n})$-linear span of the ordinary GW cycles of the strata of $\mathcal X_0$. 
\end{theorem}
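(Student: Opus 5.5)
Insertions pulled back from the total space $\mathcal X$ are handled first; the general case is reduced to them at the end. For such insertions, deformation invariance identifies the GW cycle of $\mathcal X_b$ with the cycle defined by the specialized virtual class. By the comparison results of the previous section, this equals the pushforward of $[\Mbar^{\sf exp}_{g,n}(\mathcal X_0,\beta)]^{\sf vir}$. I then apply the logarithmic degeneration formula, Theorem~\ref{thm: deg-form}. In cohomology, I Künneth-decompose the strict transform of the diagonal, $[\Delta^\dagger]=\sum_j\otimes_V\delta_V^{(j)}$. After pushing forward to $\Mbar_{g,n}$, each rigid tropical curve $\gamma$ then contributes the pushforward, along the gluing morphism $\xi_\gamma\colon\prod_V\Mbar_{g_V,n_V}\to\Mbar_{g,n}$ attached to the dual graph of $\gamma$, of the tensor product of logarithmic GW cycles of the pairs $(X_V|\partial X_V)$ with exotic insertions $\delta_V^{(j)}$ (together with the original insertions). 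Strictly, the stabilization maps only commute with gluing up to the usual contraction of unstable vertices. But contracted vertices only change which boundary stratum one lands in, so this is harmless. The upshot is that the general-fiber cycle is a combination of gluing pushforwards of exotic log GW cycles of the pairs $(X_V|\partial X_V)$.

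Next I invoke the theorem on logarithmic in terms of absolute. Each exotic log GW cycle of $(X_V|\partial X_V)$ is determined by the ordinary GW cycles of the strata of $X_V$. For the present statement I need the sharper, cycle-level form: the log cycle is an $\mathsf{RH}^\star$-linear combination of pushforwards, along gluing and forgetful maps, of ordinary GW cycles of strata, multiplied by tautological classes ($\psi$, $\kappa$, boundary). I would extract this by tracing that proof, which has three steps.

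\begin{enumerate}[(a)]
\item Replace exotic insertions by non-exotic ones plus lower-order corrections. These corrections are again of gluing type along boundary strata of the evaluation blowups, so they are tautological-coefficient combinations.
\item Use degeneration to the normal cone of $\partial X_V$ and the rubber/bundle calculus to convert log cycles of the pair into absolute cycles of $X_V$ and of projectivized split bundles over strata of $\partial X_V$.
\item Use localization / the projective-bundle results of~\cite{MR25} to express the GW cycles of $\mathbb P$-bundles over a stratum $Z$ in terms of GW cycles of $Z$ with $\psi$-class and Hodge-type coefficients. Those coefficients are tautological by Mumford's GRR computation and Faber--Pandharipande.
\end{enumerate}

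An induction on dimension of the target and on $(g,n,\beta)$ keeps the recursion finite. The components $X_V$ are toric compactifications of torus torsors over strata of $\mathcal X_0$, hence iterated projective bundles over strata, and their strata are again such bundles over strata of $\mathcal X_0$. So the same bundle step reduces everything to ordinary GW cycles of the strata of $\mathcal X_0$. Since $\mathsf{RH}^\star$ is closed under gluing pushforwards and products, the $\mathsf{RH}^\star$-linear span is preserved at every stage.

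For arbitrary insertions on $\mathcal X_b$ not pulled back from $\mathcal X$, I would use monodromy-invariance and the fact that GW cycles are linear in insertions. For the non-invariant part (vanishing cycles), I would try to show the relevant cycles are determined via the invariant ones and tautological relations, or else reduce after a further base change and blowup. I expect this to be the main obstacle, alongside making step (b) honestly tautological-coefficient at cycle level rather than only numerically. For the vanishing-cycle issue I would first try the Hodge-theoretic argument of~\cite{MR25}: the monodromy action and the deformation-invariant Zariski closure of the monodromy group show that the GW cycle, as a multilinear form, is determined by its values on monodromy-invariant classes.
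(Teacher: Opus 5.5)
The paper gives no proof of this statement beyond attributing it to \cite{MR25} and naming the degeneration formula as its starting point. Your main line is exactly that route: specialize the virtual class, apply Theorem~\ref{thm: deg-form}, K\"unneth-decompose $[\Delta^\dagger]$ into exotic classes, push forward along gluing maps, and then apply the logarithmic-in-terms-of-absolute theorem in its cycle-level, tautological-coefficient form. Your steps (a)--(c) are a reasonable guess at \cite{MR25}, but nothing in the text lets one check them. One small correction: the components $X_V$ are toric bundles over strata with arbitrary smooth projective toric fibers, not iterated projective bundles. Fiberwise torus localization still applies to such bundles, so this does not hurt the argument.

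The genuine gap is the step you flag yourself: insertions on $\mathcal X_b$ that do not extend over $\mathcal X$. Your first part only treats insertions $\prod_i\mathsf{ev}_i^\star\alpha_i$ with $\alpha_i$ restricted from $\mathcal X$, and the general case cannot be reduced to these. Such $\alpha_i$ are monodromy invariant. In typical examples, such as hypersurfaces, the vanishing cohomology contains no nonzero invariant classes, yet GW cycles with several vanishing insertions need not vanish. For the same reason, your closing claim that the GW cycle is determined by its values on monodromy-invariant classes is false if it means invariant classes on $\mathcal X_b$ placed in each slot.

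What is true is the tensor version of that claim. After a finite base change making $\beta$ monodromy invariant, the GW cycle is an invariant functional on $H^\star(\mathcal X_b^n)=H^\star(\mathcal X_b)^{\otimes n}$. By Deligne's semisimplicity of the \emph{global} monodromy it vanishes on every nontrivial isotypic summand, so it is determined by its values on invariant tensors. Local monodromy around $0$ would not suffice here, since it is unipotent and not semisimple. By the global invariant cycle theorem, invariant tensors (for instance the vanishing part of the diagonal class of $\mathcal X_b\times\mathcal X_b$) extend over a smooth compactification of the fiber power $\mathcal X\times_B\cdots\times_B\mathcal X$. In general they do not extend as sums of products of classes from $\mathcal X$.

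So you need a degeneration formula whose insertions are pulled back from a toroidal resolution of this fiber power. This requires two things:
\begin{enumerate}[(i)]
\item the family evaluation map must be lifted to that resolution, in the manner of the logarithmic lifts of the enhanced theory;
\item on the special fiber, the insertions then become classes on blowups of products of strata, i.e.\ exotic insertions, which the log-to-absolute theorem can absorb.
\end{enumerate}
Without this step, your argument proves the statement only for insertions pulled back from $\mathcal X$, not for arbitrary insertions.
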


Motivated by their work on algebraic cobordism, Levine--Pandharipande~\cite{LP09} conjectured that cohomological GW cycles always lie in the tautological ring of $\Mbar_{g,n}$. The theorem above verifies this conjecture for varieties that admit degenerations into elementary pieces for which the conjecture can be checked directly, for example via localization~\cite{LP09}. In particular, it applies to complete intersections in products of projective spaces. See~\cite{MR25} and references therein for further details.

\subsection{GW cycles in Chow}

GW cycles in Chow are extremely rich, but also difficult to control. In cohomology, even before the result above, there were a number of results about GW cycles being tautological, including for complete intersections in $\mathbb P^r$, toric varieties and $G/P$'s, and curves~\cite{ABPZ,GP99,Jan17}. In Chow, there were very few examples, almost all proved via localization~\cite{GP99}. This is both because the methods are less effective and because the classes in Chow are often \underline{not} tautological. 

The flexibility of working with snc degenerations, rather than only the double point degenerations of~\cite{Li01,Li02}, becomes essential. To explain briefly, recall that the degeneration formula typically involves diagonal classes 
\[
\Delta_W \subset W\times W,
\]
where $W$ ranges over the strata of $\mathcal X_0$. To express the GW cycles of the general fiber in terms of those of the strata of the special fiber, one must decompose
\[
[\Delta_W]\in \mathsf{CH}^\star(W\times W)
\]
or some strict transform thereof. Such decompositions do not always exist. A well-behaved class of varieties in this respect is given by the \underline{linear varieties}; see~\cite{Tot14}. There are many snc (but not double point) degenerations whose strata are linear.

Using these more flexible degenerations, we prove the following. If $X\subset \mathbb P$ is a subvariety, we call a Chow cohomology class on $X$ \underline{ambient} if it is the restriction of a class from $\mathbb P$. 

\begin{theorem}
Let $X\subset \mathbb P$ be a smooth complete intersection in a product of projective spaces. The Gromov--Witten cycles of $X$ in Chow, with ambient insertions, lie in the tautological subring of $\mathsf{CH}^\star(\Mbar_{g,n})$. 
\end{theorem}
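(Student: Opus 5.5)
We argue by induction on the dimension of the ambient product $\mathbb P=\prod_k \mathbb P^{r_k}$ and on the multidegree of $X$, using the logarithmic degeneration formula (Theorem~\ref{thm: deg-form}) in Chow. The plan is to degenerate $X$ to an snc special fiber whose strata are \emph{linear varieties} in the sense of Totaro, so that all diagonal classes decompose in Chow. Concretely, let $X=\mathbb V(f_1,\dots,f_c)$. We degenerate one equation at a time, say $f_c$, to a product of multilinear forms $\prod \ell_i$, as in the introduction, with generic $\ell_i$. We then apply semistable reduction (blowups along strata and base change) to obtain an snc degeneration $\mathcal X'\to B'$ with general fiber $X$. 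The components of the special fiber, and their strata, are then iterated projective bundles and blowups over lower-dimensional complete intersections in products of projective spaces. Ambient insertions on $X$ are pulled back from $\mathbb P\times B$, so they extend over the family, and deformation invariance plus specialization identify the Chow GW cycle of $X$ with the pushforward of $[\Mbar^{\sf exp}_{g,n}(\mathcal X_0',\beta)]^{\sf vir}$ capped with these classes.

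Next we apply Theorem~\ref{thm: deg-form}. Each term is $\delta_\dagger^!$ of a product of virtual classes of the pairs $(X_V|\partial X_V)$ on flattening blowups. The key step is to write the strict transform $[\Delta^\dagger]$ in $\mathsf{CH}^\star(\prod_V\mathsf{Ev}^\dagger_{\Lambda_V})$ as a sum $\sum_j\otimes_V\delta_V^{(j)}$ of exterior products. This works provided each evaluation stratum $\mathsf{Ev}_E$, and its strata blowups, have Chow--K\"unneth property. That holds when these strata are linear. Arranging this is exactly why we choose the degeneration so that the double, triple, etc.\ loci are built from products of projective spaces and complete intersections of strictly smaller complexity; for those, the relevant Chow classes are ambient. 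After this step, each term is a product of logarithmic GW cycles of the pairs $(X_V|\partial X_V)$ with (exotic) insertions, glued by boundary gluing maps to $\Mbar_{g,n}$. Those gluing maps preserve tautological classes.

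We then reduce from log to absolute. Following the Theorem (logarithmic in terms of absolute), the exotic insertions are first rewritten via non-exotic ones plus lower-order corrections. The log cycles of each $(X_V|\partial X_V)$ are then expressed through absolute GW cycles of its strata, with coefficients tautological in Chow. This requires checking that each step of that algorithm (rubber and projective-bundle calculus, localization on split $\mathbb P^1$-bundles) holds in Chow and produces only $\psi$-classes and boundary pushforwards. The strata here are complete intersections in products of projective spaces of lower dimension or degree, or projective bundles over them, so induction applies. The base cases are products of projective spaces themselves, handled by torus localization in Chow~\cite{GP99}, which yields tautological classes.

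We expect the main obstacle to be keeping every insertion \emph{ambient} throughout the induction. The Künneth factors $\delta_V^{(j)}$ and the exotic classes on blowups must be restrictions of classes from ambient products of projective spaces (or bundles over them), because the inductive hypothesis only covers ambient insertions. Non-ambient primitive classes on hypersurfaces have no Chow--K\"unneth decomposition, and that is the source of non-tautological classes. To handle this, we would first choose the degeneration so that every stratum carrying a diagonal is linear, which forces its Chow ring to be generated by ambient classes. Whenever a diagonal on a non-linear stratum $W$ does appear, we would arrange, by further degenerating $W$ itself, that the relevant cycles factor through a linear stratum before the diagonal is imposed.
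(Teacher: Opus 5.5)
There is a genuine gap, and it sits exactly at the step you single out as the main obstacle. Your construction never produces an snc degeneration whose gluing strata are linear, and the reason you give for the K\"unneth step is wrong.

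\textbf{The singularities of the total space bring back non-linear loci.} When you degenerate $f_c$ to $\prod_i\ell_i$ inside $\mathbb V(f_1,\dots,f_{c-1})$, the total space is singular exactly along $X\cap\mathbb V(\ell_i,\ell_j)$, placed in the special fiber, for each pair $i<j$. This is a codimension-two linear section of $X$ itself. It is not a stratum of the special fiber, so ``blowups along strata and base change'' do not remove it, and any resolution must remember it. For the quintic threefold these loci are the ten smooth plane quintics $C_{ij}=\mathbb V(f_5,\ell_i,\ell_j)$, each of genus $6$. The two natural resolutions both fail:
\begin{itemize}
\item If you blow up these curves, you create new components that are generically $\mathbb P^1\times\mathbb P^1$-bundles over $C_{ij}$. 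The new double loci are $\mathbb P^1$-bundles over $C_{ij}$, and the new triple loci contain copies of $C_{ij}$. The diagonal of such a stratum has no decomposition into exterior products in Chow: exterior products act by zero on $\mathsf{CH}_0(C_{ij})_{\hom}\otimes\QQ\neq 0$, while the diagonal acts by the identity. So $\delta_\dagger^!$ cannot be split into vertex contributions.
\item If you take a small resolution, blowing up the Weil divisors $\{t=\ell_i=0\}$ in turn, the double loci can be kept linear. But the components become blowups of $\mathbb P^3$ along some of the $C_{ij}$. These are neither complete intersections in products of projective spaces nor projective bundles over them, so your induction, as formulated, does not reach them.
\end{itemize}
Separately, choosing the $\ell_i$ to be generic multilinear forms already destroys linearity. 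For a $(2,2,2,2)$ hypersurface in $(\mathbb P^1)^4$, degenerating to $\ell_1\ell_2$ with $\ell_1,\ell_2$ of degree $(1,1,1,1)$ gives a double locus that is a K3 surface.

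\textbf{The justification conflates two different statements.} You claim that for complete intersections of smaller complexity ``the relevant Chow classes are ambient.'' Your inductive hypothesis concerns Gromov--Witten cycles with ambient insertions. It says nothing about $\mathsf{CH}^\star(W\times W)$, and a non-linear complete intersection $W$ simply has no Chow--K\"unneth decomposition of $[\Delta_W]$.

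\textbf{The fallback does not help.} ``Further degenerating $W$'' changes nothing, because $[\Delta_W]$ is fixed by the family to which you apply the degeneration formula. Changing $W$ means changing the family, which is the original problem again.

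\textbf{What is missing.} The missing ingredient is the one the paper relies on: an explicit snc (not double point) degeneration, together with an inductive class of targets closed under passing to its pieces, such that two conditions hold.
\begin{itemize}
\item Every gluing stratum must be a linear variety. The same holds for every boundary stratum along which the Chow version of the log-to-absolute reduction glues, since that reduction also involves diagonals of divisor strata.
\item The components, with the insertions that actually arise on them, must again be covered by the inductive hypothesis.
\end{itemize}
Without such a degeneration, your outline runs the formal steps (specialization, degeneration formula, K\"unneth, log-to-absolute, localization) without the input that makes them valid in Chow.
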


In particular, this implies the theorem on quintic threefolds stated in the introduction. 

\bibliographystyle{siam}
\bibliography{NotesonLogGWTheory}

\begin{thebibliography}{10}

\bibitem{ACW}
{\sc D.~Abramovich, C.~Cadman, and J.~Wise}, {\em Relative and orbifold
  {G}romov-{W}itten invariants}, Algebr. Geom., 4 (2017), pp.~472--500.

\bibitem{AC11}
{\sc D.~Abramovich and Q.~Chen}, {\em Stable logarithmic maps to
  {D}eligne-{F}altings pairs {II}}, Asian J. Math., 18 (2014), pp.~465--488.

\bibitem{ACGHOSS}
{\sc D.~Abramovich, Q.~Chen, D.~Gillam, Y.~Huang, M.~Olsson, M.~Satriano, and
  S.~Sun}, {\em Logarithmic geometry and moduli}, in Handbook of moduli,
  G.~Farkas and I.~Morrison, eds., 2013.

\bibitem{ACGS15}
{\sc D.~Abramovich, Q.~Chen, M.~Gross, and B.~Siebert}, {\em Decomposition of
  degenerate {G}romov-{W}itten invariants}, Compos. Math., 156 (2020),
  pp.~2020--2075.

\bibitem{ACGS25}
\leavevmode\vrule height 2pt depth -1.6pt width 23pt, {\em Punctured
  logarithmic maps}, vol.~15 of Memoirs of the European Mathematical Society,
  EMS Press, Berlin, 2025.

\bibitem{ACMUW}
{\sc D.~Abramovich, Q.~Chen, S.~Marcus, M.~Ulirsch, and J.~Wise}, {\em
  Skeletons and fans of logarithmic structures}, in Nonarchimedean and Tropical
  Geometry, M.~Baker and S.~Payne, eds., Simons Symposia, Springer, 2016,
  pp.~287--336.

\bibitem{ACMW}
{\sc D.~Abramovich, Q.~Chen, S.~Marcus, and J.~Wise}, {\em Boundedness of the
  space of stable logarithmic maps}, {J. Eur. Math. Soc.}, 19 (2017),
  pp.~2783--2809.

\bibitem{AF11}
{\sc D.~Abramovich and B.~Fantechi}, {\em Orbifold techniques in degeneration
  formulas}, Ann. Sc. Norm. Super. Pisa Cl. Sci. (5), 16 (2016), pp.~519--579.

\bibitem{AK00}
{\sc D.~Abramovich and K.~Karu}, {\em Weak semistable reduction in
  characteristic 0}, Invent. Math., 139 (2000), pp.~241--273.

\bibitem{AMW12}
{\sc D.~Abramovich, S.~Marcus, and J.~Wise}, {\em {Comparison theorems for
  Gromov--Witten invariants of smooth pairs and of degenerations}}, in Ann.
  Inst. Fourier, vol.~64, 2014, pp.~1611--1667.

\bibitem{AW}
{\sc D.~Abramovich and J.~Wise}, {\em {Birational invariance in logarithmic
  Gromov--Witten theory}}, Comp. Math., 154 (2018), pp.~595--620.

\bibitem{AP21}
{\sc A.~Abreu and M.~Pacini}, {\em The resolution of the universal {A}bel map
  via tropical geometry and applications}, Adv. Math., 378 (2021), pp.~Paper
  No. 107520, 62.

\bibitem{ABPZ}
{\sc H.~Arg\"{u}z, P.~Bousseau, R.~Pandharipande, and D.~Zvonkine}, {\em
  Gromov-{W}itten theory of complete intersections via nodal invariants}, J.
  Topol., 16 (2023), pp.~264--343.

\bibitem{AM14}
{\sc K.~B. Ascher and S.~Molcho}, {\em Logarithmic stable toric varieties and
  their moduli}, Algebraic Geometry, 3 (2016), pp.~296--319.

\bibitem{BN22}
{\sc L.~J. Barrott and N.~Nabijou}, {\em Tangent curves to degenerating
  hypersurfaces}, J. Reine Angew. Math., 793 (2022), pp.~185--224.

\bibitem{BCM20}
{\sc L.~Battistella, F.~Carocci, and C.~Manolache}, {\em Virtual classes for
  the working mathematician}, SIGMA Symmetry Integrability Geom. Methods Appl.,
  16 (2020), pp.~Paper No. 026, 38.

\bibitem{BNR22}
{\sc L.~Battistella, N.~Nabijou, and D.~Ranganathan}, {\em Gromov-{W}itten
  theory via roots and logarithms}, Geom. Topol., 28 (2024), pp.~3309--3355.

\bibitem{BNR24}
\leavevmode\vrule height 2pt depth -1.6pt width 23pt, {\em Logarithmic negative
  tangency and root stacks}, arXiv preprint arXiv:2402.08014,  (2024).

\bibitem{Bou17}
{\sc P.~Bousseau}, {\em Tropical refined curve counting from higher genera and
  lambda classes}, Invent. Math., 215 (2019), pp.~1--79.

\bibitem{CCUW}
{\sc R.~Cavalieri, M.~Chan, M.~Ulirsch, and J.~Wise}, {\em A moduli stack of
  tropical curves}, {Forum Math. Sigma}, 8 (2020), pp.~1--93.

\bibitem{CMRbook}
{\sc R.~Cavalieri, H.~Markwig, and D.~Ranganathan}, {\em Tropical and
  logarithmic methods in enumerative geometry}, vol.~52 of Oberwolfach
  Seminars, Birkh\"auser/Springer, Cham, [2023] \copyright 2023.

\bibitem{CheDegForm}
{\sc Q.~Chen}, {\em The degeneration formula for logarithmic expanded
  degenerations}, J. Algebr. Geom., 23 (2014), pp.~341--392.

\bibitem{Che10}
\leavevmode\vrule height 2pt depth -1.6pt width 23pt, {\em Stable logarithmic
  maps to {D}eligne-{F}altings pairs {I}}, Ann. of Math., 180 (2014),
  pp.~341--392.

\bibitem{CJR}
{\sc Q.~Chen, F.~Janda, and Y.~Ruan}, {\em The logarithmic gauged linear sigma
  model}, Invent. Math., 225 (2021), pp.~1077--1154.

\bibitem{CS12}
{\sc Q.~Chen and M.~Satriano}, {\em Chow quotients of toric varieties as moduli
  of stable log maps}, Algebra and Number Theory Journal, 7 (2013),
  pp.~2313--2329.

\bibitem{CFK}
{\sc I.~Ciocan-Fontanine and B.~Kim}, {\em Moduli stacks of stable toric
  quasimaps}, Adv. Math., 225 (2010), pp.~3022--3051.

\bibitem{FFR21}
{\sc S.~Felten, M.~Filip, and H.~Ruddat}, {\em Smoothing toroidal crossing
  spaces}, Forum Math. Pi, 9 (2021), pp.~Paper No. e7, 36.

\bibitem{Ful98}
{\sc W.~Fulton}, {\em Intersection theory}, vol.~2 of Ergebnisse der Mathematik
  und ihrer Grenzgebiete. 3. Folge. A Series of Modern Surveys in Mathematics
  [Results in Mathematics and Related Areas. 3rd Series. A Series of Modern
  Surveys in Mathematics], Springer-Verlag, Berlin, second~ed., 1998.

\bibitem{FP96}
{\sc W.~Fulton and R.~Pandharipande}, {\em Notes on stable maps and quantum
  cohomology}, in Algebraic geometry---{S}anta {C}ruz 1995, vol.~62, Part 2 of
  Proc. Sympos. Pure Math., Amer. Math. Soc., Providence, RI, 1997, pp.~45--96.

\bibitem{Gat02}
{\sc A.~Gathmann}, {\em {Absolute and relative Gromov-Witten invariants of very
  ample hypersurfaces}}, Duke Math. J., 115 (2002), pp.~171--203.

\bibitem{GKZ}
{\sc I.~M. Gelfand, M.~M. Kapranov, and A.~V. Zelevinsky}, {\em Discriminants,
  resultants, and multidimensional determinants}, Mathematics: Theory \&
  Applications, Birkh\"auser Boston, Inc., Boston, MA, 1994.

\bibitem{Gie84}
{\sc D.~Gieseker}, {\em A degeneration of the moduli space of stable bundles},
  J. Differential Geom., 19 (1984), pp.~173--206.

\bibitem{GP99}
{\sc T.~Graber and R.~Pandharipande}, {\em Localization of virtual classes},
  Invent. Math., 135 (1999), pp.~487--518.

\bibitem{GS13}
{\sc M.~Gross and B.~Siebert}, {\em {Logarithmic Gromov-Witten invariants}}, J.
  Amer. Math. Soc., 26 (2013), pp.~451--510.

\bibitem{GS26}
\leavevmode\vrule height 2pt depth -1.6pt width 23pt, {\em Intrinsic mirror
  symmetry}, J. Amer. Math. Soc., 39 (2026), pp.~313--451.

\bibitem{GJR}
{\sc S.~Guo, F.~Janda, and Y.~Ruan}, {\em Structure of higher genus
  {G}romov-{W}itten invariants of quintic 3-folds}, arXiv:1812.11908,  (2018).

\bibitem{Herr}
{\sc L.~Herr}, {\em {The Log Product Formula}}, arXiv:1908.04936,  (2019).

\bibitem{Hol17}
{\sc D.~Holmes}, {\em Extending the double ramification cycle by resolving the
  {A}bel-{J}acobi map}, J. Inst. Math. Jussieu, 20 (2021), pp.~331--359.

\bibitem{HMPPS}
{\sc D.~Holmes, S.~Molcho, R.~Pandharipande, A.~Pixton, and J.~Schmitt}, {\em
  Logarithmic double ramification cycles}, Invent. Math., 240 (2025),
  pp.~35--121.

\bibitem{IP01}
{\sc E.-N. Ionel and T.~H. Parker}, {\em Relative {G}romov-{W}itten
  invariants}, Ann. of Math. (2), 157 (2003), pp.~45--96.

\bibitem{Jan17}
{\sc F.~Janda}, {\em Gromov-{W}itten theory of target curves and the
  tautological ring}, Michigan Math. J., 66 (2017), pp.~683--698.

\bibitem{JPPZ}
{\sc F.~Janda, R.~Pandharipande, A.~Pixton, and D.~Zvonkine}, {\em Double
  ramification cycles on the moduli spaces of curves}, Publ. Math. IH{\'E}S,
  125 (2017), pp.~221--266.

\bibitem{KSZ91}
{\sc M.~Kapranov, B.~Sturmfels, and A.~Zelevinsky}, {\em Quotients of toric
  varieties}, Math. Ann., 290 (1991), pp.~643--655.

\bibitem{Kap93}
{\sc M.~M. Kapranov}, {\em Chow quotients of {G}rassmannians. {I}}, in I. M.
  Gelfand Seminar, vol.~16, Part 2 of Adv. Soviet Math., Amer. Math. Soc.,
  Providence, RI, 1993, pp.~29--110.

\bibitem{KKMSD}
{\sc G.~Kempf, F.~Knudsen, D.~Mumford, and B.~Saint-Donat}, {\em Toroidal
  embeddings {I}}, Lecture Notes in Mathematics, 339 (1973).

\bibitem{KH23}
{\sc P.~Kennedy-Hunt}, {\em The logarithmic quot space: foundations and
  tropicalisation}, arXiv preprint arXiv:2308.14470,  (2023).

\bibitem{KH21}
\leavevmode\vrule height 2pt depth -1.6pt width 23pt, {\em Logarithmic
  {P}andharipande-{T}homas spaces and the secondary polytope}, Trans. Amer.
  Math. Soc., 378 (2025), pp.~1--44.

\bibitem{KHSUK}
{\sc P.~Kennedy-Hunt, Q.~Shafi, and A.~U. Kumaran}, {\em Tropical refined curve
  counting with descendants}, Comm. Math. Phys., 405 (2024), pp.~Paper No. 240,
  41.

\bibitem{Kim08}
{\sc B.~Kim}, {\em Logarithmic stable maps}, in {New developments in algebraic
  geometry, integrable systems and mirror symmetry}, Adv. Stud. Pure Math.,,
  Math. Soc. Japan, Tokyo, 2010, pp.~167--200.

\bibitem{KLR23}
{\sc B.~Kim, H.~Lho, and H.~Ruddat}, {\em The degeneration formula for stable
  log maps}, Manuscripta Math., 170 (2023), pp.~63--107.

\bibitem{LP09}
{\sc M.~Levine and R.~Pandharipande}, {\em Algebraic cobordism revisited},
  Invent. Math., 176 (2009), pp.~63--130.

\bibitem{LR01}
{\sc A.-M. Li and Y.~Ruan}, {\em Symplectic surgery and {G}romov-{W}itten
  invariants of {C}alabi-{Y}au 3-folds}, Invent. Math., 145 (2001),
  pp.~151--218.

\bibitem{Li01}
{\sc J.~Li}, {\em Stable morphisms to singular schemes and relative stable
  morphisms}, J. Diff. Geom., 57 (2001), pp.~509--578.

\bibitem{Li02}
\leavevmode\vrule height 2pt depth -1.6pt width 23pt, {\em {A degeneration
  formula of GW-invariants}}, J. Diff. Geom., 60 (2002), pp.~199--293.

\bibitem{MR16}
{\sc T.~Mandel and H.~Ruddat}, {\em Descendant log {G}romov-{W}itten invariants
  for toric varieties and tropical curves}, Trans. Amer. Math. Soc., 373
  (2020), pp.~1109--1152.

\bibitem{Mano12}
{\sc C.~Manolache}, {\em Virtual pull-backs}, J. Algebr. Geom., 21 (2012),
  pp.~201--245.

\bibitem{MW17}
{\sc S.~Marcus and J.~Wise}, {\em Logarithmic compactification of the
  {A}bel-{J}acobi section}, Proc. Lond. Math. Soc. (3), 121 (2020),
  pp.~1207--1250.

\bibitem{MP06}
{\sc D.~Maulik and R.~Pandharipande}, {\em A topological view of
  {G}romov--{W}itten theory}, Topology, 45 (2006), pp.~887--918.

\bibitem{MR26}
{\sc D.~Maulik and D.~Ranganathan}, {\em {The GW/PT conjectures for toric
  pairs}}, In preparation.

\bibitem{MR20}
\leavevmode\vrule height 2pt depth -1.6pt width 23pt, {\em Logarithmic
  {D}onaldson--{T}homas theory}, Forum Math. Pi, 12 (2024), p.~Paper No. e9.

\bibitem{MR25}
\leavevmode\vrule height 2pt depth -1.6pt width 23pt, {\em {Gromov-Witten
  theory, degenerations, and the tautological ring}}, arXiv:2510.04779,
  (2025).

\bibitem{MR23}
\leavevmode\vrule height 2pt depth -1.6pt width 23pt, {\em Logarithmic
  enumerative geometry for curves and sheaves}, Camb. J. Math., 13 (2025),
  pp.~51--172.

\bibitem{Mi03}
{\sc G.~Mikhalkin}, {\em Enumerative tropical geometry in {${\mathbb{R}^2}$}},
  J. Amer. Math. Soc, 18 (2005), pp.~313--377.

\bibitem{SCM24}
{\sc S.~C. Mok}, {\em {Logarithmic Fulton-Macpherson compactification of
  configuration spaces}}, Preprint,  (2024).

\bibitem{Mol16}
{\sc S.~Molcho}, {\em Universal stacky semistable reduction}, Israel J. Math.,
  242 (2021), pp.~55--82.

\bibitem{MR21}
{\sc S.~Molcho and D.~Ranganathan}, {\em A case study of intersections on
  blowups of the moduli of curves}, Algebra Number Theory, 18 (2024),
  pp.~1767--1816.

\bibitem{MW18}
{\sc S.~Molcho and J.~Wise}, {\em The logarithmic {P}icard group and its
  tropicalization}, Compos. Math., 158 (2022), pp.~1477--1562.

\bibitem{Nab19}
{\sc N.~Nabijou}, {\em {Recursion Formulae in Logarithmic Gromov-Witten Theory
  and Quasimap Theory}}, PhD thesis, Imperial College London, 2019.

\bibitem{NR19}
{\sc N.~Nabijou and D.~Ranganathan}, {\em Gromov-{W}itten theory with maximal
  contacts}, Forum Math. Sigma, 10 (2022), pp.~Paper No. e5, 34.

\bibitem{NS06}
{\sc T.~Nishinou and B.~Siebert}, {\em Toric degenerations of toric varieties
  and tropical curves}, Duke Math. J., 135 (2006), pp.~1--51.

\bibitem{Par11}
{\sc B.~Parker}, {\em {Gromov Witten invariants of exploded manifolds}},
  arXiv:1102.0158,  (2011).

\bibitem{Par17}
\leavevmode\vrule height 2pt depth -1.6pt width 23pt, {\em Three dimensional
  tropical correspondence formula}, Commun. Math. Phys., 353 (2017),
  pp.~791--819.

\bibitem{Par17a}
\leavevmode\vrule height 2pt depth -1.6pt width 23pt, {\em {Tropical gluing
  formulae for Gromov-Witten invariants}}, arXiv:1703.05433,  (2017).

\bibitem{Par19}
\leavevmode\vrule height 2pt depth -1.6pt width 23pt, {\em Holomorphic curves
  in exploded manifolds: virtual fundamental class}, Geom. Topol., 23 (2019),
  pp.~1877--1960.

\bibitem{R15b}
{\sc D.~Ranganathan}, {\em {Skeletons of stable maps I: rational curves in
  toric varieties}}, J. Lond. Math. Soc., 95 (2017), pp.~804--832.

\bibitem{R19}
\leavevmode\vrule height 2pt depth -1.6pt width 23pt, {\em Logarithmic
  {G}romov-{W}itten theory with expansions}, Algebr. Geom., 9 (2022),
  pp.~714--761.

\bibitem{RUK22}
{\sc D.~Ranganathan and A.~Urundolil~Kumaran}, {\em Logarithmic
  {G}romov-{W}itten theory and double ramification cycles}, J. Reine Angew.
  Math., 809 (2024), pp.~1--40.

\bibitem{RW19}
{\sc D.~Ranganathan and J.~Wise}, {\em Rational curves in the logarithmic
  multiplicative group}, Proc. Amer. Math. Soc., 148 (2020), pp.~103--110.

\bibitem{Tev07}
{\sc J.~Tevelev}, {\em Compactifications of subvarieties of tori}, Amer. J.
  Math., 129 (2007), pp.~1087--1104.

\bibitem{Tot14}
{\sc B.~Totaro}, {\em Chow groups, {C}how cohomology, and linear varieties},
  Forum Math. Sigma, 2 (2014), pp.~Paper No. e17, 25.

\bibitem{TY23}
{\sc H.-H. Tseng and F.~You}, {\em A {G}romov-{W}itten theory for simple
  normal-crossing pairs without log geometry}, Comm. Math. Phys., 401 (2023),
  pp.~803--839.

\bibitem{U15}
{\sc M.~Ulirsch}, {\em Tropical compactification in log-regular varieties},
  Math. Z., 280 (2015), pp.~195--210.

\bibitem{Vak00}
{\sc R.~Vakil}, {\em {The enumerative geometry of rational and elliptic curves
  in projective space.}}, J. Reine Angew. Math., 529 (2000), pp.~101--153.

\bibitem{VGNS}
{\sc M.~Van~Garrel, N.~Nabijou, and Y.~Schuler}, {\em Gromov--witten theory of
  bicyclic pairs}, Trans. Amer. Math. Soc.,  (2026).

\bibitem{Wis16a}
{\sc J.~Wise}, {\em Moduli of morphisms of logarithmic schemes}, {Algebra
  Number Theory}, 10 (2016), pp.~695--735.

\bibitem{Wis16b}
\leavevmode\vrule height 2pt depth -1.6pt width 23pt, {\em Uniqueness of
  minimal morphisms of logarithmic schemes}, Algebr. Geom., 6 (2019),
  pp.~50--63.

\end{thebibliography}

\end{document}